\theoremstyle{plain}
\newtheorem{thm}{Theorem}[section]
\newtheorem{lemma}[thm]{Lemma}
\newtheorem{cor}[thm]{Corollary}
\newtheorem{prop}[thm]{Proposition}
\newtheorem{propdef}[thm]{Proposition/Definition}
\theoremstyle{definition}
\newtheorem{defn}[thm]{Definition}
\theoremstyle{remark}
\newtheorem{ex}[thm]{Example}
\DeclareRobustCommand{\cev}[1]{%
  {\mathpalette\do@cev{#1}}%
}
\newcommand{\do@cev}[2]{%
  \vbox{\offinterlineskip
    \sbox\z@{$\m@th#1 x$}%
    \ialign{##\cr
      \hidewidth\reflectbox{$\m@th#1\vec{}\mkern4mu$}\hidewidth\cr
      \noalign{\kern-\ht\z@}
      $\m@th#1#2$\cr
    }%
  }%
}
\def\GL{\operatorname{GL}}
\def\O{\operatorname{O}}
\def\SO{\operatorname{SO}}
\def\dom{\operatorname{dom}}
\def\t{\operatorname{t}}
\def\supp{\operatorname{supp}}
\def\id{\operatorname{id}}
\def\hom{\operatorname{Hom}}
\def\germ{\operatorname{germ}}
\def\Diff{\operatorname{Diff}}
\def\Tr{\operatorname{Tr}}
\def\Fr{\operatorname{Fr}}
\def\ev{\operatorname{ev}}
\renewcommand{\d}{\ensuremath{d}}
\newcommand{\w}{{\mathchoice{\,{\scriptstyle\wedge}\,}{{\scriptstyle\wedge}}
{{\scriptscriptstyle\wedge}}{{\scriptscriptstyle\wedge}}}}
\newcommand{\composed}{\circ}
\newcommand{\cinfinity}[1]{\mathcal{C}^\infty\left(#1\right)}
\DeclareMathOperator{\so}{\mathfrak{so}}
\newcommand*{\tBun}{\mathrm{T}}
\newcommand*{\tFF}{\tBun\!\FF}
\newcommand*{\nuFF}{\nu\!\FF}
\DeclareSymbolFont{script}{U}{eus}{m}{n}
\DeclareMathSymbol{\Wedge}{0}{script}{"5E}
\DeclareMathOperator{\jetexp}{e}
\def\FF{\operatorname{\mathcal{F}}}
\def\CC{\operatorname{\mathcal{C}}}
\def\CC{\operatorname{\mathcal{C}}}
\def\UU{\operatorname{\mathcal{U}}}
\def\VV{\operatorname{\mathcal{V}}}
\def\TT{\operatorname{\mathcal{T}}}
\def\HH{\operatorname{\mathcal{H}}}
\def\NN{\operatorname{\mathcal{N}}}
\def\XX{\operatorname{\mathcal{X}}}
\DeclareMathOperator{\I}{\mathcal{I}}
\def\FS{\operatorname{\mathscr{F}}}
\def\US{\operatorname{\mathscr{U}}}
\def\RB{\operatorname{\mathbb{R}}}
\def\NB{\operatorname{\mathbb{N}}}
\def\AF{\operatorname{\mathfrak{A}}}
\def\XF{\operatorname{\mathfrak{X}}}
\def\gf{\operatorname{\mathfrak{g}}}
\def\af{\operatorname{\mathfrak{a}}}
\def\kf{\operatorname{\mathfrak{k}}}
\def\glf{\operatorname{\mathfrak{gl}}}
\begin{document}

\title{Chern-Weil theory for Haefliger-singular foliations}
\author{Lachlan E. MacDonald\\Australian Institute for Machine Learning\\The University of Adelaide\\Adelaide, SA, 5000 \and Benjamin McMillan\\School of Mathematical Sciences\\
  The University of Adelaide\\
  Adelaide, SA, 5000}

\date{March 2025}

\maketitle

\begin{abstract}
  We give a Chern-Weil map for the Gel'fand-Fuks characteristic classes of Haefliger-singular foliations, those foliations defined by smooth Haefliger structures with dense regular set. Our characteristic map constructs, out of singular geometric structures adapted to singularities, explicit forms representing characteristic classes in de Rham cohomology. The forms are functorial under foliation morphisms. We prove that the theory applies, up to homotopy, to general smooth Haefliger structures: subject only to obvious necessary dimension constraints, every smooth Haefliger structure is homotopic to a Haefliger-singular foliation, and any morphism of Haefliger structures is homotopic to a morphism of Haefliger-singular foliations. As an application, we provide a generalisation to the singular setting of the classical construction of forms representing the Godbillon-Vey invariant.
\end{abstract}

\section{Introduction}\label{s1}
In this paper, we give a Chern-Weil construction of the Gel'fand-Fuks characteristic classes of certain singular foliations in terms of singular metrics and connections.  Our methods apply specifically to singular foliations arising from Haefliger structures \cite{ha3}, which are the closest singular cousins of regular foliations, as their leaves are still locally determined by complete families of first integrals.  We show that this class is sufficiently general for our theory to apply (up to homotopy) to all smooth Haefliger structures on sufficiently high-dimensional manifolds.  Our work opens the way for further study of the topology of Haefliger's classifying space with all the flexibility afforded by singularities (cf. \cite{thurston1}), and opens up the potential advancement of the study of singular foliations via noncommutative geometry and index theory, which has made great strides in recent years \cite{iakovos2, iakovos7,iakovos3, iakovos}.

\noindent\textbf{The characteristic classes of regular foliations:}
A \emph{regular foliation of codimension $q$} on an  $n$-manifold $M$ is given by an involutive subbundle $\tFF \subset \tBun M$ of rank $n-q$.
By the Frobenius theorem, the involutivity of \( \tFF \) is equivalent to a decomposition \( \FF \) of $M$ as a union of non-intersecting, immersed submanifolds of dimension $n-q$.
Assuming the foliation is transversely orientable (i.e. that the normal bundle $\nuFF:=\tBun M/\tFF$ is an orientable vector bundle), the subbundle $\tFF$ can alternatively be regarded as the kernel of a nowhere vanishing, decomposable $q$-form $\omega = \omega^1 \w \ldots \w \omega^q$ on $M$ that is \emph{integrable}, in the sense that there exists a 1-form $\eta$ for which  $\d\omega=\eta\w\omega$.

The historical starting point for the characteristic class theory of foliations was the discovery of Godbillon and Vey \cite{gv} that, for codimension-1 regular foliations, the associated \( 3 \)-form $\eta\w \d\eta$ is closed, and its class in de Rham cohomology (now called the \emph{Godbillon-Vey class}) depends only on \( \FF \), and not on the choices of $\eta$ and $\omega$.
Nontriviality of the Godbillon-Vey class was first shown by Roussarie (also in \cite{gv}), while in \cite{thurston1} Thurston gave a construction exhibiting continuous variation of the Godbillon-Vey class for a family of regular foliations of the 3-sphere.

It was discovered by Bott \cite{bott2} that the Godbillon-Vey class of regular codimension-\( 1 \) foliations is  the simplest example of a large family of so-called \emph{secondary characteristic classes} associated to regular foliations of arbitrary codimension, transversely orientable or not.
More precisely, Bott showed in \cite{bott1} that the normal bundle of any regular codimension-\( q \) foliation admits connections that are flat along leaves (now called \emph{Bott connections}), and proved as an easy consequence that any Pontryagin polynomial of degree more than \( 2q \) vanishes when evaluated on the curvature of a Bott connection.
Bott deduced that certain Chern-Simons \cite{chernsimons} transgression forms of such polynomials-in-curvature therefore define de Rham cohomology  classes; in particular, for a codimension-1 regular foliation, the Godbillon-Vey class arises as a transgression of the square of the first Pontryagin form.

Bott's Chern-Weil account of the characteristic classes of regular foliations coincided with deep work by Gel'fand and Fuks studying the (continuous) cohomology of infinite-dimensional Lie algebras of vector fields \cite{gelfuks1, gelfuks2, gelfuks3, gelfuks}.  Gel'fand and Fuks in particular computed the $\O(q)$-basic cohomology $H^{*}(A(\af_{q}),\O(q))$ of the Lie algebra $\af_{q}$ of $\infty$-jets at zero of vector fields on $\RB^{q}$, which is now frequently referred to simply as the ``Gel'fand-Fuks cohomology".  They discovered that, in addition to encoding the usual Pontryagin classes of real vector bundles, this Lie algebra cohomology automatically encodes Bott's vanishing theorem and the consequent secondary characteristic classes, and is isomorphic to the cohomology of a well-understood finite-dimensional subalgebra $WO_{q}\hookrightarrow A^{*}(\af_{q})$, whose inclusion factors through the 2-jets of vector fields.
The relationship between Gel'fand-Fuks cohomology and the characteristic classes of foliations was formalised by Bott and Haefliger \cite{botthaef, bott3}, drawing on work of Kobayashi regarding higher order frame bundles \cite{kob}.

\noindent\textbf{Functoriality and Haefliger structures:} One of the defining properties of vector bundle characteristic classes is functorialiality under pullbacks.
One must expect the same to hold for the characteristic classes of foliations, but immediately the problem arises that a smooth map $\phi \colon N\rightarrow M$ into a regularly foliated manifold  \( M \) does not in general pull back a regular foliation.
More precisely, by the Frobenius theorem, a regular foliation of $M$ is determined by an open covering $\UU:=\{U_{\alpha}\}_{\alpha\in\AF}$ together with submersive first integrals $f_{\alpha} \colon U_{\alpha}\rightarrow\RB^{q}$ whose level sets define the leaves of the foliation in $U_{\alpha}$.
The ``pullback foliation" of $N$, defined by the open covering $\{\phi^{-1}(U_{\alpha})\}_{\alpha\in\AF}$ and functions $f_{\alpha}\circ\phi \colon \phi^{-1}(U_{\alpha})\rightarrow\RB^{q}$, admits singularities wherever $\phi$ is not transverse to leaves.
So, that the characteristic classes of foliations be functorial necessitates their definition for singular foliations of this nature.

This lack of functoriality was solved by Haefliger, whose insight in \cite{ha3} was to categorify codimension-$q$ foliations, regular or not, by considering the \emph{transition functions} between local first integrals as fundamental.  Since Haefliger's insight is essential to our work, we record his definition here.

\begin{defn}\label{haefycocycle}
	Let $X$ be a topological space.  A \textbf{Haefliger cocycle of codimension $q$} on $X$ consists of an open covering $\{U_{\alpha}\}_{\alpha\in\AF}$ of $X$ together with continuous functions $f_{\alpha} \colon U_{\alpha}\rightarrow\RB^{q}$, called \textbf{Haefliger charts},
	and for each $x\in U_{\alpha}\cap U_{\beta}$, the \textbf{transition function} $h_{\alpha\beta}^{x}$, a local diffeomorphism  of $\RB^{q}$ defined on a neighbourhood of $f_{\beta}(x)$, such that for all indices \( \alpha, \beta \) and \( x\in U_{\alpha}\cap U_{\beta} \),
	\begin{enumerate}
		\item the assignment $x\mapsto h^{x}_{\alpha\beta}$ is continuous, in the sense that the map $(y,\vec{s})\mapsto h^{y}_{\alpha\beta}(\vec{s})$ is continuous for \( (y, \vec{s}) \) near \( (x,f_\beta(x)) \in X \times \RB^q \),
		\item $f_{\alpha} = h^{x}_{\alpha\beta}\circ f_{\beta}$ on some open neighbourhood of $x$ in \( U_{\alpha}\cap U_{\beta} \), and
		\item the transition functions satisfy the following cocycle condition: for all $x\in U_{\alpha}\cap U_{\beta}\cap U_{\delta}$, one has $h^{x}_{\alpha\beta}\circ h^{x}_{\beta\delta} = h^{x}_{\alpha\delta}$ as germs.
	\end{enumerate}
	Two Haefliger cocycles are equivalent if there exists a third Haefliger cocycle refining them, and a \textbf{Haefliger structure} is an equivalence class of Haefliger cocycles.
\end{defn}

Of particular concern to Haefliger were the \emph{numerable} Haefliger structures, those which may be represented by a cocycle defined over an open cover admitting a subordinate partition of unity.
This class includes all Haefliger structures on smooth manifolds and, in particular, all regular foliations.
Haefliger showed that the category $\HH^{q}$, whose objects are numerable Haefliger structures of codimension $q$ and whose morphisms are continuous maps pulling back the structure on the codomain to that on the domain, admits a terminal object $(B\Gamma_{q},\gamma)$ \cite[Theorem 7]{ha6}, the \emph{classifying space} of codimension-$q$ Haefliger structures (here $\Gamma_{q}$ is the groupoid of germs of local diffeomorphisms of $\RB^q$).  As an application of this fact, he showed that there exists a universal characteristic map $H^{*}(A(\af_{q}),\O(q))\rightarrow H^{*}(B\Gamma_{q})$ from Gel'fand-Fuks cohomology to the cohomology of \( (B\Gamma_{q},\gamma) \), thereby furnishing a characteristic map for \emph{all} numerable Haefliger structures of codimension $q$ \cite[p. 53]{ha4}, which is functorial under morphisms in $\HH^q$.

\noindent\textbf{A gap in the theory:}
Let $\TT$ denote the category whose objects are pairs $(X,c)$, where $X$ is a topological space and $c$ is a cohomology class on \( X \), and whose morphisms are continuous maps that pull back the class of the codomain to that of the domain.
Haefliger's characteristic map furnishes an assignment $\HH^{q} \times A^{*}(\af_{q})_{\O(q)} \rightarrow \TT$, which is functorial
in the first argument and homomorphic (with respect to the ring structures of $A^{*}(\af_{q})_{\O(q)}$ and of cohomology) in the second argument.

Now, denote by $\FS^{q}_{reg}$ the category of regular codimension-\( q \) foliations on smooth manifolds, with morphisms the regular foliation maps.
The Frobenius theorem induces an inclusion $\FS^{q}_{reg}\hookrightarrow\HH^{q}$ of categories, and Bott's Chern-Weil homomorphism amounts to a similarly functorial-homomorphic assignment $\FS^{q}_{reg} \times WO_{q} \rightarrow\TT$.
Bott proves in \cite[Theorem 10.16]{bott2} that his Chern-Weil characteristic map is compatible with that of Haefliger, in the sense that it recovers the characteristic classes coming from Haefliger's classifying space.  Put another way, Bott's Chern-Weil characteristic map makes the diagram
\begin{equation}\label{introdiagram}
\begin{tikzcd}
\FS^{q}_{reg} \times WO_{q}\ar[dr] \ar[rr, hook] & & \HH^{q} \times A^{*}(\af_{q})_{\O(q)} \ar[dl] \\ & \TT &
\end{tikzcd}
\end{equation}
commute.

On the one hand, the category $\FS^{q}_{reg}$ is not sufficiently large to accurately model the full subcategory of smooth manifolds in $\HH^{q}$, even up to homotopy, because there exist smooth Haefliger structures not homotopic to a regular foliation.\footnote{For example, any choice of \( q \) smooth functions on a manifold defines a smooth Haefliger structure.
But on \( S^{2} \), no codimension-\( 1 \) Haefliger structure is homotopic to a regular foliation, because \( S^2 \) admits no regular codimension-\( 1 \) foliation.}
For such Haefliger structures, Bott's Chern-Weil theory offers no way to probe topology.
On the other hand, while Haefliger's characteristic map works for all Haefliger structures,  and the abstract definition allows quick proofs of general properties, this very abstraction makes it difficult to use differential geometric methods for the concrete study of topology, as is typically necessary for finer-grained results \cite{gv, thurston1}.
A middle ground is needed, one permitting the construction of characteristic classes for all Haefliger structures on manifolds, yet in terms of differential geometry.
This gap in the literature has stood for almost five decades.

\noindent\textbf{Closing the gap:}
Our primary result is the provision of such a middle ground. Our solution is an extension of Bott's Chern-Weil theory to a natural class of singular foliations, which we now define, restricting ourselves to \emph{smooth} Haefliger cocycles on smooth manifolds (those whose Haefliger charts and transition functions are smooth).

\begin{defn}\label{def1}
	The \textbf{singular set} \( \Sigma \) of a smooth Haefliger cocycle of codimension $q$ on a manifold $M$ of dimension at least $q$ is the closed set of all critical points of each $f_{\alpha}$;
	the singular set depends only on the associated Haefliger structure.
	The \textbf{regular set} is the complement of \( \Sigma \), which we often denote as \(\tilde{M}:= M -  \Sigma \).
	If \(\tilde{M} \) is dense in \( M \), we say that the Haefliger cocycle determines a \textbf{Haefliger-singular foliation $(M,\FF)$ of codimension $q$}, whose \textbf{regular subfoliation} is the regular foliation of $\tilde{M}$ determined by the restriction of the Haefliger cocycle to $\tilde{M}$.
\end{defn}
We note that the term ``singular foliation" is often used to refer to \emph{Stefan-Sussmann} singular foliations, namely those defined by integrable families of vector fields \cite{stefan, sussmann}.  We show in the appendix that every Haefliger-singular foliation (in fact, every smooth Haefliger structure) does define a Stefan-Sussmann singular foliation.  However, the converse is not true; for instance, the foliation of $\RB^2$ by the integrals of $x\partial_{x}+y\partial_{y}$ is Stefan-Sussmann, but admits no nontrivial first integrals in any neighbourhood of the origin, so cannot be associated to a Haefliger structure.

The category $\FS^{q}_{sing}$ of codimension-$q$ Haefliger-singular foliations, whose morphisms are smooth maps pulling back the foliation of the codomain to that of the domain, fits snugly in between $\FS^{q}_{reg}$ and $\HH^{q}$.  In addition, we describe below a Chern-Weil map extending that of Bott to Haefliger-singular foliations, expanding the commuting diagram \eqref{introdiagram} to
\begin{equation}
\begin{tikzcd}
\FS^{q}_{reg} \times WO_{q}\ar[dr] \ar[r, hook] & \FS^{q}_{sing} \times WO_{q} \ar[d] \ar[r,hook] & \HH^{q} \times A^{*}(\af_{q})_{\O(q)} \ar[dl] \\ & \TT. &
\end{tikzcd}
\end{equation}
We prove that $\FS^{q}_{sing}$ is large enough to fill the gap described above.
Every object in $\HH^{q}$ is homotopic to a Haefliger-singular foliation, subject only to the obvious constraint that the underlying manifold have dimension at least $q$, and every morphism in $\HH^{q}$ between two such objects is homotopic to a morphism in $\FS^{q}_{sing}$.
In this way, our theory greatly expands the application of the Chern-Weil theory of Gel'fand-Fuks characteristic classes of Haefliger structures.
As a straightforward application of our theory, we describe how the classical algorithm for the geometric construction of the Godbillon-Vey invariant, used extensively for the study of regular foliations \cite{gv, thurston1}, extends cleanly to the singular setting.

\noindent\textbf{Outline of paper:}
Section \ref{s2} consists of a review of the necessary prerequisites, on Gel'fand-Fuks cohomology and on the Chern-Weil approach to characteristic classes for regular foliations.
Section \ref{univ} covers a smooth (diffeological) generalisation of Haefliger's classifying theorem, and an associated de Rham-theoretic universal characteristic map.
This is the most complete and explicit account of the universal characteristic map that we are aware of, enabling a clean proof that our Chern-Weil construction recovers the correct characteristic classes.

Section \ref{haefybundle} gives a de Rham-theoretic proof that the Chern-Weil approach to regular foliations recovers the characteristic classes coming from Haefliger's classifying space.
Although the result, originally due to Bott \cite{bott2}, has been known for some decades, our presentation is novel in that it invokes an under-utilised family of fibre bundles, that we call the \emph{Haefliger bundles}, which are defined over both regular and singular foliations.
The Haefliger bundles act as an intermediary between the universal characteristic map and the Chern-Weil characteristic map for regular foliations.

Section \ref{singularCHW} consists of an extension of these ideas to singular foliations.
We characterise the metric-Bott-connection pairs on the normal bundle of the regular subfoliation of a Haefliger-singular foliation that are \emph{adapted} to the singularities via the jets of their exponential maps.  Such pairs we term \emph{adapted geometries}. We then prove our Chern-Weil theorem for singular foliations with adapted geometry.

\begin{thm}\label{thm: adapted CW map}
	Let $M$ be a Haefliger-singular foliation of codimension $q$.  Any adapted geometry on $M$ specifies a unique Chern-Weil homomorphism from $WO_{q}\subset A^{*}(\af_{q})$ to $\Omega^{*}(M)$.
	The Chern-Weil homomorphism descends to cohomology to agree with the Haefliger characteristic map from Gel'fand-Fuks cohomology.
\end{thm}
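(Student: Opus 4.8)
The plan is to reduce everything to the regular theory on the dense regular set $\tilde{M}=M-\Sigma$ and then to control the behaviour of the resulting forms across $\Sigma$ using the adapted condition. There are two assertions to establish: first, that an adapted geometry produces a well-defined homomorphism $WO_{q}\rightarrow\Omega^{*}(M)$ landing in genuine (globally smooth) forms, uniquely determined by the geometry; and second, that the cohomology classes so obtained coincide with those assigned by the Haefliger characteristic map.

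For the first assertion, I would begin on $\tilde{M}$, where an adapted geometry restricts to an honest metric and Bott connection on the normal bundle of the regular subfoliation, so that the construction of Section \ref{haefybundle} applies verbatim to yield a homomorphism $WO_{q}\rightarrow\Omega^{*}(\tilde{M})$. The crucial observation is that each generator of $WO_{q}$ is sent to a form built from the jets of the exponential map of the connection, and, because $WO_{q}$ factors through the $2$-jets of vector fields on $\RB^{q}$, only finitely many orders of these jets intervene. The defining property of an adapted geometry is precisely that these jets extend smoothly across $\Sigma$; hence each such form, a priori defined only on $\tilde{M}$, extends to a smooth form on all of $M$. Since $\tilde{M}$ is dense and the forms are smooth, such an extension is unique, and the algebra- and cochain-map properties are inherited from $\tilde{M}$ by continuity. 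This establishes existence and uniqueness.

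For the second assertion, I would route the comparison through the Haefliger bundle of $(M,\FF)$, which by construction is defined over the singular foliation and not merely over its regular part. Using the smooth universal characteristic map of Section \ref{univ}, the Haefliger characteristic classes of $(M,\FF)$ are represented by the pullbacks of the universal Gel'fand-Fuks forms along the smooth classifying map of this bundle. Section \ref{haefybundle} identifies, for regular foliations, these pullbacks with the Chern-Weil forms, and the adapted geometry supplies exactly the reduction of the Haefliger bundle needed to repeat this identification. On $\tilde{M}$ the identification is the regular one; and because both the Haefliger bundle and the reduction determined by the adapted geometry extend smoothly over $\Sigma$, the equality of universal pullbacks with Chern-Weil forms persists on all of $M$, whence passing to cohomology yields agreement with the Haefliger characteristic map.

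I expect the principal difficulty to lie in the interface between the two assertions at $\Sigma$: namely, in verifying that the reduction of the Haefliger bundle cut out by an adapted geometry is itself smooth across the singular set, so that the regular-case identification of Section \ref{haefybundle} glues to a global statement on $M$ rather than merely an identity on the dense open subset $\tilde{M}$. This is where the jet-theoretic content of the adapted condition does the essential work: it must be strong enough to guarantee smoothness of the classifying data across $\Sigma$, yet it is only ever tested against the finitely many jet-dependent generators of $WO_{q}$, so that no information beyond the relevant finite order need be controlled. Both the smooth extension in the first assertion and the cohomological bookkeeping in the second ultimately rest on this single point.
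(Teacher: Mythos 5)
Your proposal is correct and takes essentially the same approach as the paper: the Chern-Weil forms on \( \tilde{M} \) are realised as pullbacks of the tautological forms on \( \Fr_{2}(h_{\FF})/\O(q) \) by the section \( i_{2}\sigma_{\nabla}\sigma_{\epsilon} \), the adapted condition gives the smooth extension of that section across \( \Sigma \) (hence of the forms, uniquely by density), and agreement with the Haefliger characteristic map follows from the universal characteristic map and homotopy of sections exactly as in Theorem \ref{cwreg}. The only cosmetic difference is that what you flag as the ``principal difficulty''---smoothness of the classifying data across \( \Sigma \)---is in the paper not something to be verified but the very definition of an adapted geometry, with the jet-of-exponential-map description of the section supplied by Theorem \ref{correspondence}.
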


We prove in addition that adapted geometries and associated Chern-Weil homomorphisms are functorial under \emph{Haefliger-singular maps}, namely those smooth maps which pull back a Haefliger-singular foliation of the codomain to a Haefliger-singular foliation of the domain.
The functoriality is used in our second main result, that Haefliger-singular foliations with adapted geometries suffice to recover the characteristic classes of all Haefliger structures on manifolds:

\begin{thm}
	All Haefliger-singular foliations admit adapted geometries.  Moreover, the category $\FS^{q}_{sing}$ of codimension-$q$ Haefliger-singular foliations with adapted geometries is homotopy-equivalent to the category $\HH^{q}_{man}$ consisting of codimension-$q$ Haefliger structures on smooth manifolds of dimension at least $q$.
\end{thm}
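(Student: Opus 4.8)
The plan is to prove the final theorem in two logically separate halves, matching its two assertions. The first half—that every Haefliger-singular foliation admits an adapted geometry—is an existence statement of local-to-global type, and I would attack it with a partition-of-unity argument. The second half—the homotopy equivalence $\FS^{q}_{sing}\simeq\HH^{q}_{man}$—is a categorical statement, and I would establish it by constructing functors in both directions and exhibiting natural homotopies witnessing that their composites are the respective identities up to homotopy. Throughout I will lean on the universal classifying theorem of Section~\ref{univ} and on the characterisation of adapted geometries given in Section~\ref{s4}.

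For the existence of adapted geometries, I would first recall that an adapted geometry is a metric-Bott-connection pair on the normal bundle $\nuFF$ of the regular subfoliation over $\tilde{M}$, subject to a compatibility condition with the singularities expressed through the jets of the exponential maps. Locally over each Haefliger chart $U_\alpha$, the first integral $f_\alpha$ pulls back the flat structure of $\RB^q$, and one can write down an explicit model metric and Bott connection adapted to $f_\alpha$ by construction; the transition functions $h_{\alpha\beta}^x$ guarantee these local models agree up to the action of $\Gamma_q$. I would then glue the local models using a partition of unity subordinate to $\{U_\alpha\}$, which exists because we work with numerable (smooth, manifold) structures. The key point to verify is that the jet-compatibility condition defining ``adapted'' is \emph{convex}, or at least preserved under convex combination weighted by a partition of unity, so that the glued pair remains adapted across $\Sigma$; I expect this to reduce to checking that the defining condition on the exponential jets is affine in the connection and metric data, which holds because the relevant jet conditions are linear constraints. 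Density of $\tilde{M}$ ensures the glued geometry is determined on a dense set and extends continuously.

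For the homotopy equivalence, I would set up the inclusion-type functor $\FS^{q}_{sing}\to\HH^{q}_{man}$ that forgets the adapted geometry and remembers only the underlying Haefliger structure; this is well-defined since a Haefliger-singular foliation \emph{is} a smooth Haefliger structure on a manifold of dimension at least $q$. The essential work is to produce a functor in the reverse direction, i.e.\ to show every object of $\HH^{q}_{man}$ is homotopic to a Haefliger-singular foliation and every morphism is homotopic to a Haefliger-singular map. For objects, given a smooth Haefliger structure classified by a map $M\to B\Gamma_q$, I would perturb the Haefliger charts $f_\alpha$ to maps whose regular sets are dense—a transversality/genericity argument à la Thom, using that $\dim M\ge q$ so that generic smooth maps to $\RB^q$ have dense regular locus—while keeping the perturbation through a homotopy of Haefliger cocycles. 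For morphisms, a relative version of the same transversality argument makes a given continuous (smoothable) map transverse to the singular foliation after a homotopy, yielding a Haefliger-singular map. The first half of the theorem then supplies the adapted geometry needed to lift into $\FS^{q}_{sing}$.

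\emph{The hard part will be} the morphism-level homotopy: showing not merely that individual objects can be made Haefliger-singular, but that the perturbations can be chosen \emph{compatibly} across a morphism, so that a single homotopy simultaneously makes the source, target, and map all Haefliger-singular and the map transverse. This requires a relative transversality theorem that fixes the already-singular data on one end while perturbing the other, and careful bookkeeping to ensure the homotopies assemble into a genuine natural transformation at the level of $\HH^{q}_{htpy}$. I expect the dimension hypothesis $\dim M\ge q$ to enter precisely here as the condition guaranteeing enough room for the transverse perturbation, and I would verify that the resulting functors induce mutually inverse maps on homotopy categories by checking that forgetting and then re-choosing an adapted geometry returns an object in the same homotopy class—which follows from the first half together with the contractibility of the space of adapted geometries over a fixed Haefliger-singular foliation (itself a consequence of the affine/convex structure used in the gluing step).
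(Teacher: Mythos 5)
Your proposal for the second half contains a genuine gap at the object level. You propose to perturb the Haefliger charts $f_\alpha$ to maps with dense regular sets ``while keeping the perturbation through a homotopy of Haefliger cocycles,'' but chart-wise genericity is not available here: the charts of a Haefliger cocycle are rigidly coupled by the identities $f_\alpha = h^{x}_{\alpha\beta}\circ f_\beta$, where the transition germs are local diffeomorphisms of $\RB^q$. Independent (generic) perturbations of the $f_\alpha$ will in general admit \emph{no} system of transition germs at all, hence define no Haefliger cocycle, let alone one homotopic to the original; while perturbations constrained to respect the transitions are exactly as rigid as the structure itself, so Thom transversality cannot be invoked chart by chart. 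The missing idea is Haefliger's graph construction (Proposition \ref{graph}): embed $(M,h)$ by $i\colon M\to G(h)$ into a manifold carrying a \emph{regular} foliation pulling back to $h$. This converts the problem of coherently perturbing a cocycle into that of perturbing a single smooth map into a regularly foliated manifold, where jet transversality applies (Proposition \ref{homotopy}, proved via residual subsets of $\cinfinity{M,G(h)}$); the pullback of the regular foliation along the perturbed map is then automatically a coherent cocycle, Haefliger-singular, and homotopic to $h$. Your morphism-level ``relative transversality'' worry is resolved by the same device together with the right bookkeeping: one first replaces the codomain object by a Haefliger-singular one, and then perturbs the map into that now-fixed codomain using Proposition \ref{homotopy}; the domain structure is the pullback, so it is dragged along automatically, and no simultaneous perturbation of source, target and map is needed.

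For the first half, your route (local flat models glued by a partition of unity) genuinely differs from the paper's, which instead pulls back an arbitrary geometry on the regular foliation of $G(h)$ along the graph embedding, so that existence is a one-line corollary of functoriality (Theorem \ref{functoriality} and Corollary \ref{existence}) with no gluing at all. Your version can likely be made to work, but the convexity claim is imprecise as stated: the adapted condition is \emph{not} affine in the metric, since $\epsilon$ enters the section $\sigma_\epsilon$ through an orthonormal frame (a square-root-type operation). What is true, and what you would have to prove, is that in Haefliger-bundle coordinates over a chart the composite section $i_{2}\sigma_{\nabla}\sigma_{\epsilon}$ decouples into a $\GL(q,\RB)/\O(q)$-component recording the matrix of $\epsilon$ in the frame $(df_\alpha|_{\nuFF})^{-1}\partial_i$ (hence transforming linearly in $\epsilon$) and a quadratic-jet component recording the transverse Christoffel symbols of $\nabla$ in that frame (hence affine in $\nabla$ and independent of $\epsilon$); granting this decoupling, convex combinations of adapted pairs are adapted and your gluing goes through. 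Note also that your local models are adapted precisely because each chart $f_\alpha\colon U_\alpha\to\RB^q$ is itself a Haefliger-singular map onto the point-foliated $\RB^q$, so your first half secretly relies on the same functoriality theorem that powers the paper's one-step argument.
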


Section \ref{singularCHW} is concluded with an application of our theory: we generalise the classical algorithm for the construction of the Godbillon-Vey invariant, famously used by Thurston to study the topology of $B\Gamma_{q}$ \cite{thurston2}, to Haefliger-singular foliations.

Finally, we include 3 appendices, which support the statements of the main part of the paper, and may be of interest on their own.

Appendix \ref{app: Stefan-Sussman} explains the relationship between Haefliger structures and Stefan-Sussman foliations, with a demonstration that every Haefliger structure is an example of a Stefan-Sussman foliation.

In Appendix \ref{app: smooth Haefliger classifying} we carefully extends Haefliger's construction of the classifying space for Haefliger structures to the smooth category, using diffeologies as the category of generalized smooth spaces.
This is a functorial construction, along classical lines, assigning to a smooth groupoid \( \Gamma \) the space \( B\Gamma \) that classifies smoothly numerable \( \Gamma \)-structures on smooth spaces.

Appendix \ref{app: sections of Haefliger bundles} is concerned with sections of the Haefliger bundle quotient by \( \O(q) \) and their uniqueness up to homotopy.
This is a classical statement for continuous sections, but it is necessary to show that smooth (diffeological) sections exist and are all smoothly homotopic to each other.
The smooth statements are used to define the smooth characteristic map, and underlie the fact that it is well-defined up to homotopy.

\subsection{Acknowledgements}
LM was supported by the Australian Research Council Discovery Project grant DP200100729.
BM was supported by the Australian Research Council Discovery Project grant DP190102360.
LM wishes to thank Iakovos Androulidakis, Moulay Benameur and David Roberts for helpful discussions.
BM wishes to thank Mike Eastwood and Thomas Leistner for helpful discussions.

\section{Background}\label{s2}

We work under the convention that the set of natural numbers $\NB$ includes zero.
All manifolds are assumed to be connected, paracompact, Hausdorff, and without boundary, unless otherwise stated.

\subsection{Gel'fand-Fuks cohomology}

Given a Lie group $G$ with Lie algebra $\gf$, a \emph{$G$-differential graded algebra}, or \( G \)-DGA, is a differential graded \( \RB \)-algebra $(A^{\bullet},\d)$, where $A^{\bullet}$ carries a smooth, degree 0 action of $G$, with associated Lie derivative defined by
\[
L_{\xi}(a):=\frac{d}{dt}\bigg|_{0}\exp(t\xi)\cdot a, \qquad \xi\in\gf, \quad a\in A,
\]
and a contraction operator $\iota_{\xi} \colon A^{\bullet}\rightarrow A^{\bullet-1}$ for each $\xi\in\gf$ such that $\d$ and $\iota$ anticommute up to the Lie derivative,
\[
L_{\xi} = \d \iota_{\xi}+\iota_{\xi} \d.
\]
Given any Lie subgroup $K$ of $G$, with Lie algebra $\kf$, the \emph{$K$-basic subalgebra}  of \( A^{\bullet} \) is the differential graded subalgebra
\[
A^{\bullet}_{K}:=\bigl\{ a\in A^{\bullet} \colon k\cdot a = a\mbox{ for all }k\in K, \mbox{ and }\iota_{\xi}a = 0\mbox{ for all }\xi\in\kf \bigr\} .
\]
Denote the cohomology of \( A^{\bullet}_{K} \) by $H^{*}(A,K)$.

For $0\leq q,k<\infty$, fix the Lie group $G^{k}_{q}$, the \emph{$k^{th}$-order jet group of $\RB^{q}$}, comprising the $k$-jets at zero of diffeomorphisms of $\RB^{q}$ that fix zero.
Let $\gf^{k}_{q}$ denote the Lie algebra of \( G^{k}_{q} \).
As \( k \) ranges from \( 0 \) to \( \infty \), the natural maps between the $G^{k}_{q}$ form a projective system of Lie groups.
The projective limit $G^{\infty}_{q}$ of this system is the \emph{infinite order jet group of $\RB^{q}$}, which inherits a natural smooth structure \cite[Section 7.1]{saunders}.
With this smooth structure, $G^{\infty}_{q}$ is an infinite-dimensional Lie group, with Lie algebra $\gf^{\infty}_{q}$ equal to the projective limit of the projective system of Lie algebras determined by the $\gf^{k}_{q}$.

For $q\geq 1$, denote by $\af_{q}$ the Lie algebra of $\infty$-jets at zero of smooth vector fields on $\RB^{q}$.
Endow $\af_{q}$ with a projective limit topology by identifying it as the projective limit of the system $\{\af_{q}^{k},\pi_{k}\}$ of finite-dimensional manifolds of $k$-jets  $\af_{q}^{k}$ at zero of vector fields on $\RB^{q}$, with $\pi_{k} \colon \af^{k}_{q}\rightarrow\af^{k-1}_{q}$ the canonical projection.
This, plus the Lie bracket on \( \af_{q} \) determined by that of vector fields, determines a topological Lie algebra structure on \( \af_{q} \) \cite{gelfuks}.

For any $X \in \af_{q}$, the flow \( \varphi_{t} \) of \( X \) is well-defined near \( 0 \) for small \( t \).
As such, \( X \) can be realised as the time derivative of a path of \( \infty \)-jets of diffeomorphisms,
\begin{equation}\label{timeder}
  X = \frac{d}{dt}\bigg|_{0}j^{\infty}_{\vec{0}}(\varphi_{t}) .
\end{equation}
This gives, in particular, a natural inclusion $\gf^{\infty}_{q}\hookrightarrow\af_{q}$ of Lie algebras, with image characterised as the jets of those vector fields that vanish at \( 0 \).

For $k\geq 1$, denote by $A^{k}(\af_{q})$ the space of continuous, alternating, multi-linear functionals $\Wedge^{k}\af_{q}\rightarrow\RB$.
The usual Chevalley-Eilenberg formula defines a differential $\d \colon A^{k}(\af_{q})\rightarrow A^{k+1}(\af_{q})$,
given for $c\in A^{k}(\af_{q})$ and $X_{0},\dots,X_{k}\in\af_{q}$ by
\[
\d c(X_{0},\dots,X_{k}):=\sum_{i<j}(-1)^{i+j}c([X_{i},X_{j}],X_{0},\dots,\hat{X}_{i},\dots,\hat{X}_{j},\dots,X_{k+1}) .
\]
Now, $A^{*}(\af_{q})$ is a $G^{\infty}_{q}$-differential graded algebra.
Indeed, the inclusion $\gf^{\infty}_{q}\hookrightarrow\af_{q}$ defines a contraction operator $\iota_{\xi} \colon A^{*}(\af_{q})\rightarrow A^{*-1}(\af_{q})$ for all $\xi\in\gf^{\infty}_{q}$,
and the right action of $G^{\infty}_{q}$ on $\af_{q}$ defined by
\[
  X \cdot j^{\infty}_{\vec{0}}(g) = \frac{d}{dt}\bigg|_{0}j^{\infty}_{\vec{0}}\big(g^{-1}\circ\varphi_{t}\circ g\big), \qquad
  X = \frac{d}{dt}\bigg|_{0}j^{\infty}_{\vec{0}}(\varphi_{t})\in\af_{q},
  \quad
  j^{\infty}_{\vec{0}}(g)\in G_{q}^{\infty},
\]
induces an action of $G^{\infty}_{q}$ on $A^{*}(\af_{q})$, compatible with the contraction operator.
The $\O(q)$-basic cohomology $H^{*}(A(\af_{q}),\O(q))$ of $A^{*}(\af_{q})$ is frequently referred to as simply the \emph{Gel'fand-Fuks cohomology}, and was computed by Gel'fand and Fuks in \cite{gelfuks}.

We now review the method outlined by Bott \cite{bott3} for the computation of Gel'fand-Fuks cohomology $H^{*}(A(\af_{q}),\O(q))$, and in the next subsection, how the $\O(q)$-basic $A^{*}(\af_{q})$-cocycles define characteristic classes.
Fix the standard linear coordinates \( s^i \) on \( \RB^q \).
For any multi-index $\alpha\in\NB^{q}$ and for $1\leq i\leq q$, the Dirac-derivative functional on \( \af_{q} \) defined by
\[
\delta^{i}_{\alpha}(X):=(-1)^{|\alpha|}\frac{\partial^{|\alpha|}X^{i}}{\partial s^{\alpha}}(0), \qquad X \in \af_{q}
\]
is an element of $A^{1}(\af_{q})$, and by elementary distribution theory, the collection of all such functionals generates $A^{1}(\af_{q})$ linearly, hence generates $A^{*}(\af_{q})$ as an algebra.
The following structure equations then follow from a routine calculation:
\begin{equation}\label{structureequations}
\d\delta^{i}+\delta^{i}_{j}\w\delta^{j} = 0,\qquad\qquad \d\delta^{i}_{j}+\delta^{i}_{jk}\w\delta^{k} +\delta^{i}_{k}\w\delta^{k}_{j} = 0,
\end{equation}
with the Einstein summation convention assumed.
As we will recall in the next subsection, these are the universal structure equations for torsion-free affine connections.
The first is the structure equation for torsion-free affine connections (with $\delta^{i}$ the components of the solder form, and $\delta^{i}_{j}$ the components of the connection form), while the second is the equation defining the curvature $\Delta^{i}_{j}:=-\delta^{i}_{jk}\w\delta^{k} = \d\delta^{i}_{j} + \delta^{i}_{k}\w\delta^{k}_{j}$ of the connection.

Define the \( q \times q \) matrices of \( 1 \)- and \( 2 \)-forms
\begin{equation}\label{deltaDelta}
	\delta:=\bigl(\delta^{i}_{j}\bigr),\qquad\Delta:=\bigl(\Delta^{i}_{j}\bigr), \qquad i,j = 1, \ldots, q
\end{equation}
corresponding to the functionals $\delta^{i}_{j}$ and $\Delta^{i}_{j}$ respectively, and denote by $\delta:=\delta_{s}+\delta_{o}$ and $\Delta:=\Delta_{s}+\Delta_{o}$ their respective decompositions into symmetric and antisymmetric components.

Let $\RB[c_{1},\dots,c_{q}]_{q}$ denote the quotient of the polynomial algebra generated by symbols $c_{i}$ of degree $2i$ by the ideal of elements of degree greater than $2q$, and let $\Wedge(h_{1},h_{3},\dots,h_{l})$ denote the exterior algebra generated by symbols $h_{i}$ of degree $2i-1$, with $l$ the largest odd integer that is less than or equal to $q$.  Equip the graded-commutative algebra
\[
WO_{q}:=\RB[c_{1},\dots,c_{q}]_{q}\otimes\Wedge(h_{1},h_{3},\dots,h_{l})
\]
with the differential $\d$ defined by $\d c_{i} = 0$ for all $i$ and $\d h_{j} = c_{j}$ for all $j$ odd.
Then by the results of \cite{gue}, $WO_{q}$ embeds as a differential graded subalgebra of $A^{*}(\af_{q})_{\O(q)}$ according to the formul\ae
\begin{equation}\label{c_i}
c_{i}:=\Tr(\Delta^{\w i}),\qquad 1\leq i\leq q,
\end{equation}
\begin{equation}\label{hj}
h_{j}:=j\Tr\bigg(\int_{0}^{1}\delta_{s}(t\Delta_{s}+\Delta_{o}+(t^{2}-1)\delta_{s}^{\w 2})^{\w(j-1)}dt\bigg),\qquad 1\leq j\leq q,\,j\text{ odd}.
\end{equation}
One has the following theorem, which computes the cohomology of the infinite-dimensional algebra $A^{*}(\af_{q})_{\O(q)}$ in terms of the finite-dimensional subalgebra $WO_{q}$.

\begin{thm}\cite[Theorem 2]{botthaef}\label{woq}
	The algebra inclusion $WO_{q}\hookrightarrow A^{*}(\af_{q})_{\O(q)}$ induces an isomorphism on cohomology, $H^{*}(WO_{q})\cong H^{*}(A(\af_{q}),\O(q))$.
\end{thm}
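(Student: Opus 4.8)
The plan is to exhibit the inclusion $WO_{q}\hookrightarrow A^{*}(\af_{q})_{\O(q)}$ as a quasi-isomorphism by a filtration argument that reduces the infinite-dimensional Gel'fand-Fuks computation to classical invariant theory of a reductive group. First I would record that, as a graded-commutative algebra, $A^{*}(\af_{q})$ is freely generated in degree $1$ by the Dirac functionals $\delta^{i}_{\alpha}$, with differential dictated by the structure equations \eqref{structureequations}; passing to the $\O(q)$-basic subalgebra imposes $\O(q)$-invariance together with annihilation by $\iota_{\xi}$ for $\xi\in\so(q)$, the latter eliminating the antisymmetric part $\delta_{o}$ of the connection form. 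I would then verify that the formul\ae{} \eqref{c_i}--\eqref{hj} land in this basic subalgebra and assemble into a map of differential graded algebras, with the truncation at degree $2q$ forced by \textbf{Bott vanishing}: since each curvature entry is $\Delta^{i}_{j}=-\delta^{i}_{jk}\w\delta^{k}$ and there are only $q$ solder one-forms $\delta^{1},\dots,\delta^{q}$, any monomial in the $\Delta^{i}_{j}$ of form-degree exceeding $2q$ must repeat a solder form and hence vanishes.

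For the cohomology isomorphism I would filter $\af_{q}$ by order, $\af_{q}=\LL_{-1}\supseteq\LL_{0}\supseteq\LL_{1}\supseteq\cdots$, where $\LL_{k}$ consists of the formal vector fields whose coefficients vanish to order $\geq k+1$ at the origin; this is a filtration by subalgebras with $[\LL_{i},\LL_{j}]\subseteq\LL_{i+j}$, $\LL_{0}=\gf^{\infty}_{q}$, and $\LL_{0}/\LL_{1}\cong\glf_{q}$. Dualising produces a decreasing filtration of the basic complex $A^{*}(\af_{q})_{\O(q)}$, whose associated spectral sequence converges to $H^{*}(A(\af_{q}),\O(q))$ (convergence requires the continuous/topological cochain framework, since $\af_{q}$ is infinite dimensional). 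On the $E_{1}$-page the associated graded is the Chevalley-Eilenberg complex of the graded Lie algebra $\bigoplus_{k}\LL_{k}/\LL_{k+1}$, taken relative to $\O(q)$, in which the higher-order generators $\delta^{i}_{\alpha}$ with $|\alpha|\geq 2$ decouple from the solder forms $\delta^{i}$, the symmetric connection $\delta_{s}$, and the curvatures $\Delta$.

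The heart of the matter is a lemma asserting that the higher-order part is acyclic: I would construct an explicit, $\O(q)$-equivariant contracting homotopy (or argue through the freeness of the higher jet spaces as $\glf_{q}$-modules, organising them into a Koszul factor) that collapses the complex onto the subalgebra generated by $\delta^{i}$, $\delta_{s}$, and $\Delta$. Classical invariant theory of $\O(q)$ then identifies the $\O(q)$-invariant, $\so(q)$-horizontal part of what remains with the relative Weil algebra $W(\glf_{q},\O(q))$, whose invariant generators are precisely the Pontryagin symbols $c_{i}=\Tr(\Delta^{\w i})$ together with their transgressions $h_{j}$; imposing the Bott-vanishing truncation at $2q$ recovers exactly $WO_{q}$. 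Tracking the surviving differentials of the spectral sequence — principally the transgression $\d h_{j}=c_{j}$ already present in $WO_{q}$ — shows it degenerates after finitely many steps, and naturality of the construction identifies the edge homomorphism with the inclusion, yielding $H^{*}(WO_{q})\cong H^{*}(A(\af_{q}),\O(q))$.

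The main obstacle, I expect, is precisely this acyclicity of the higher-order part together with the convergence of the order-filtration spectral sequence in the infinite-dimensional topological setting: one must control which continuous cochains survive, realise the homotopy equivariantly for $\O(q)$, and confirm that the truncation dictated by Bott vanishing matches exactly the vanishing forced by the number $q$ of variables. A cleaner route to the finish, sidestepping much of the analysis, is to first compute the $\GL(q,\RB)$-relative cohomology $H^{*}(A(\af_{q}),\GL(q,\RB))$ and then descend to $\O(q)$ using the contractibility of $\GL(q,\RB)/\O(q)$, thereby replacing the explicit homotopy argument with a standard comparison of relative Weil algebras.
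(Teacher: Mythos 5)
First, a point of comparison: the paper does not prove this statement at all — it is quoted verbatim from Bott--Haefliger \cite[Theorem 2]{botthaef} and used as a black box — so your proposal can only be judged on its own merits. The preliminary parts of your sketch are fine: the formul\ae{} \eqref{c_i}--\eqref{hj} do land in the $\O(q)$-basic subalgebra, they define a DGA map, and your pigeonhole argument for Bott vanishing (each $\Delta^{i}_{j}=-\delta^{i}_{jk}\w\delta^{k}$ carries a solder form, of which there are only $q$) is exactly right. The problems lie in the two steps that carry all the weight.

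The fatal error is your proposed ``cleaner route.'' Contractibility of $\GL(q,\RB)/\O(q)$ lets one reduce the structure group of a bundle and compare basic cohomologies of \emph{spaces}, but it does not identify relative Lie algebra cohomology with respect to $\O(q)$ with that with respect to $\GL(q,\RB)$. In fact $H^{*}(A(\af_{q}),\GL(q,\RB))\cong\RB[c_{1},\dots,c_{q}]_{q}$ with zero differential, whereas $H^{*}(A(\af_{q}),\O(q))\cong H^{*}(WO_{q})$ contains the secondary classes, e.g.\ the Godbillon--Vey class $[h_{1}c_{1}^{q}]$; moreover the comparison map from the $\GL$-relative to the $\O$-relative theory kills every odd $c_{i}$, since $\d h_{i}=c_{i}$ in $WO_{q}$. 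If your shortcut were valid, all secondary classes would vanish and the subject of this paper would be empty. The discrepancy between $K$-relative and $G$-relative Lie algebra cohomology is precisely the \emph{source} of the exotic classes; it cannot be argued away by contractibility of $G/K$.

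The remaining gap is that the heart of the theorem is asserted, not proved, and the one structural claim you do make about it is wrong. The order filtration does not produce an interesting spectral sequence: because the bracket on $\af_{q}$ is graded by order, $A^{*}(\af_{q})_{\O(q)}$ simply \emph{splits} as a direct sum of weight-homogeneous subcomplexes, so the ``associated graded'' complex is the original complex again, and within each weight the differential genuinely couples the higher generators to the lower ones ($\d\delta^{i}_{j}$ contains $-\delta^{i}_{jk}\w\delta^{k}$, and so on); no decoupling occurs. What actually makes the weight decomposition useful — and what your sketch never invokes — is the Euler-field homotopy: for $E=s^{i}\partial_{i}$ one has $L_{E}=\d\iota_{E}+\iota_{E}\d$ acting on weight-$w$ cochains as multiplication by $w$, and $\iota_{E}$ preserves the $\O(q)$-basic subcomplex, so cohomology is concentrated in weight zero (where only finitely many generators can appear). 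Even granting that, the identification of the weight-zero basic complex with $WO_{q}$ requires the genuine content of the Gel'fand--Fuks computation: the $\O(q)$ invariant theory, the homotopy eliminating jets of order $\geq 3$ (this is why $WO_{q}$ factors through $2$-jets), and Vey's lemma that $WO_{q}$ — which is a \emph{proper} subalgebra of the truncated relative Weil algebra, not equal to it — includes quasi-isomorphically. Your proposal defers all of this to a hoped-for lemma, so as it stands it is a plausible outline of the literature's strategy with its central step missing and its proposed simplification invalid.
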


\subsection{Frame bundles and tautological forms}

Let $(M,\FF)$ be a regular foliation of codimension $q$.
For \( x \in M \), a \emph{transverse embedding through \( x \)} is an embedding $u\colon \RB^{q}\rightarrow M$ such that \( u(0) = x \) and for each $\vec{s}\in\RB^{q}$ one has $\tBun_{u(\vec{s})}M = du_{\vec{s}}(\tBun_{\vec{s}}\RB^{q})\oplus \tBun_{u(\vec{s})}\FF$.
Two $k$-jets $j^{k}_{\vec{0}}(u_{1})$ and $j^{k}_{\vec{0}}(u_{2})$ of transverse embeddings through $x$ are said to be \emph{leaf space equivalent} if for some (hence any) Haefliger chart $f\colon U\rightarrow\RB^{q}$ defined around $x$ one has $j^{k}_{\vec{0}}(f\circ u_{1}) = j^{k}_{\vec{0}}(f\circ u_{2})$.  The leaf space equivalence class of a $k$-jet $j^{k}_{\vec{0}}(u)$ will be denoted $j^{k}_{\vec{0},\t}(u)$ (the $\t$ being used to denote \emph{transverse}).

\begin{defn}\cite{gcc}\label{framebundle}
	Let $(M,\FF)$ be a regular foliation of codimension $q$, and let $0\leq k\leq \infty$.
	The \textbf{transverse $k$-frame bundle} of $(M,\FF)$ is the principal $G^{k}_{q}$-bundle $\Fr_{k}(M/\FF)\rightarrow M$ whose fibre over $x\in M$ is the set of leaf space equivalence classes of $k$-jets at $\vec{0}$ of transverse embeddings through \( x \).
\end{defn}

\begin{ex}\label{ex-frame characteristic map}
        If \( q = \dim(M) \), then $\FF$ is a regular foliation of $M$ by points, Haefliger charts are coordinate charts, and this definition recovers the standard definition of the $k$-frame bundle $\Fr_{k}(M)$ of $M$.
In this case, the diffeomorphism group $\Diff(M)$ acts from the left on $\Fr_{k}(M)$ by postcomposition, and this action commutes with the principal right action of $G^{k}_{q}$.
Denote by $\Omega^{*}(\Fr_{k}(M))^{\Diff(M)}$ the $G^{k}_{q}$-DGA of $\Diff(M)$-invariant forms on \( \Fr_{k}(M) \).
Using Equation \eqref{timeder}, the \emph{tautological \( \af^{k-1}_{q} \)-valued \( 1 \)-form} $\omega^{k}$ defined by the formula
\[
\omega^{k}_{j^{k}_{\vec{0}}(u)}\bigg(\frac{d}{dt}\bigg|_{0}j^{k}_{\vec{0}}(u_{t})\bigg):=\frac{d}{dt}\bigg|_{0}j^{k-1}_{\vec{0}}(u^{-1}\circ u_{t}) \qquad \mbox{ for \ } \frac{d}{dt}\bigg|_{0}j^{k}_{\vec{0}}(u_{t})\in \tBun_{j^{k}_{\vec{0}}(u)}\Fr_{k}(M)
\]
is easily seen to be an element of $\Omega^{1}(\Fr_{k}(M))^{\Diff(M)}\otimes\af^{k-1}_{q}$.
The tautological 1-forms $\omega^{k}$ were introduced by Kobayashi \cite{kob}.

The $\af_{q}$-valued 1-form $\omega:=\omega^{\infty}$ on \( \Fr_{\infty}(M) \) satisfies the Maurer-Cartan identity 
\[ d\omega^{\infty}+\tfrac{1}{2}[\omega^{\infty},\omega^{\infty}] = 0 \]
(see \cite[p. 113]{gcc}), and the identities in Equation \eqref{structureequations} are low degree components of this equation.
Furthermore, \( \omega \) has trivial kernel, and defines a canonical trivialisation of the tangent bundle, $\tBun\Fr_{\infty}(M)\cong\Fr_{\infty}(M)\times\af_{q}$.
As such, any $c\in A^{*}(\af_{q})$ defines a form $c(\omega \w \cdots \w \omega)\in\Omega^{*}(\Fr_{\infty}(M))^{\Diff(M)}$, and this assignment gives rise to a canonical isomorphism
\begin{equation}\label{omega}
  \begin{tikzcd}
    A^{*}(\af_{q}) \ar[r, "\omega"] & \Omega^{*}(\Fr_{\infty}(M))^{\Diff(M)}
  \end{tikzcd}
\end{equation}
of $G^{\infty}_{q}$-differential graded algebras (cf. \cite[Proposition 3.5]{folbund}).

It is known \cite[p. 131, Proposition]{natopdiffgeom} that for each finite $k\geq 1$, the natural projection $G^{k}_{q}\rightarrow \GL(\RB^{q})$ is a principal fibre bundle, with typical fibre a contractible nilpotent Lie group whose Lie exponential map is a global diffeomorphism.
The same is therefore also true of the fibration $G^{\infty}_{q}\rightarrow\GL(\RB^{q})$.  It follows that $\Fr_{\infty}(M)\rightarrow \Fr_{1}(M)$ has contractible fibres, and always admits sections.

In particular, letting $\nabla$ be an affine connection on $M$, with exponential map $\exp^{\nabla}$, the formula
\begin{equation}\label{section}
	\sigma_{\nabla}(\vec{e}_{x}):=j^{\infty}_{\vec{0}}\big(\vec{s}\mapsto\exp^{\nabla}_{x}(\vec{s}\cdot\vec{e}_{x})\big) \qquad \vec{e}_{x} \in \Fr_{1}(M)
\end{equation}
defines a \( G^{1}_{q} \)-equivariant section $\sigma_{\nabla}$ of $\Fr_{\infty}(M)\rightarrow \Fr_{1}(M)$.  Recall now the functionals $\delta$ and $\Delta$ introduced in Equation \eqref{deltaDelta}.

\begin{prop}\cite[Lemma 18]{diffcyc}\label{introprop}
	Let $\nabla$ be an affine connection on a manifold $M$, and let $\sigma_{\nabla} \colon \Fr_{1}(M)\rightarrow\Fr_{\infty}(M)$ be the section given in Equation \eqref{section}.  Then $(\sigma_{\nabla})^{*}\delta(\omega)\in\Omega^{1}(\Fr_{1}(M);\mathfrak{gl}(\RB^{q}))$ is the connection form associated to $\nabla$ and $(\sigma_{\nabla})^{*}\Delta(\omega)\in\Omega^{2}(\Fr_{1}(M);\mathfrak{gl}(\RB^{q}))$ is its curvature.
\end{prop}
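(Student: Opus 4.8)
The plan is to reduce the curvature assertion to the connection assertion, and then to prove the latter by a local computation governed by the two-jet of $\exp^{\nabla}$. For the reduction, observe that applying the matrices $\delta$ and $\Delta$ to $\omega$ turns the second structure equation of \eqref{structureequations} into the identity $\Delta(\omega)=\d\bigl(\delta(\omega)\bigr)+\delta(\omega)\w\delta(\omega)$ of forms on $\Fr_{\infty}(M)$. Since pullback along $\sigma_{\nabla}$ commutes with $\d$ and with $\w$, this gives
\[
(\sigma_{\nabla})^{*}\Delta(\omega)=\d\bigl((\sigma_{\nabla})^{*}\delta(\omega)\bigr)+(\sigma_{\nabla})^{*}\delta(\omega)\w(\sigma_{\nabla})^{*}\delta(\omega),
\]
which is exactly the curvature of the $\glf(\RB^{q})$-valued $1$-form $(\sigma_{\nabla})^{*}\delta(\omega)$. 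Thus it suffices to identify $(\sigma_{\nabla})^{*}\delta(\omega)$ with the connection form of $\nabla$; the statement about $\Delta$ then follows formally.

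For the connection assertion, I would first note that the functional $\delta^{i}_{j}$ reads only the linear term of a vector-field jet, so $\delta(\omega)$ factors through the $\Fr_{2}(M)$-level tautological form and $(\sigma_{\nabla})^{*}\delta(\omega)$ sees $\sigma_{\nabla}$ only through the two-jet of $\exp^{\nabla}$. Working in a coordinate chart $(x^{i})$, which trivialises $\Fr_{1}(M)\cong U\times\GL(\RB^{q})$ by writing a frame as a matrix $g=(e^{i}_{a})$, I would compute, along a path $t\mapsto\vec e_{x}(t)$ with velocity $(\dot x^{i},\dot e^{i}_{a})$, the transition diffeomorphism $\phi_{t}:=u^{-1}\circ u_{t}$ of $\RB^{q}$, where $u=\sigma_{\nabla}(\vec e_{x})$ and $u_{t}=\sigma_{\nabla}(\vec e_{x}(t))$ are regarded as maps $\RB^{q}\to M$ via \eqref{section}. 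Expanding $\exp^{\nabla}_{x}(v)^{i}=x^{i}+v^{i}-\tfrac12\Gamma^{i}_{jk}(x)v^{j}v^{k}+O(|v|^{3})$ and its inverse, and reading off the generating field $X=\tfrac{d}{dt}\big|_{0}\phi_{t}$, one finds that the linear-in-$\vec s$ part is $\partial X^{a}/\partial s^{b}(0)=(e^{-1})^{a}_{i}\bigl(\dot e^{i}_{b}+\Gamma^{i}_{jk}(x)e^{k}_{b}\dot x^{j}\bigr)$, which is precisely the local expression $g^{-1}\d g+g^{-1}\Gamma g$ of the connection form of $\nabla$. Applying $\delta^{a}_{b}(X)=-\partial X^{a}/\partial s^{b}(0)$ then recovers this form, up to the sign convention discussed below.

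Two structural checks both streamline and corroborate the computation. Restricting to vertical vectors ($\dot x=0$) collapses the expansion to the Maurer--Cartan term $(e^{-1})^{a}_{i}\dot e^{i}_{b}$, showing that $(\sigma_{\nabla})^{*}\delta(\omega)$ reproduces $\glf(\RB^{q})$ on fundamental vector fields; and the $G^{1}_{q}$-equivariance of $\sigma_{\nabla}$ together with the $\GL(\RB^{q})$-equivariance of $\delta$ shows that $(\sigma_{\nabla})^{*}\delta(\omega)$ transforms in the adjoint representation. These two facts already identify $(\sigma_{\nabla})^{*}\delta(\omega)$ as \emph{a} principal connection form on $\Fr_{1}(M)$, and the coordinate computation pins down its horizontal distribution as the one whose horizontal lifts are the $\nabla$-parallel frames, i.e. that the connection is $\nabla$ itself.

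The main obstacle I anticipate is the bookkeeping in inverting $u$ and extracting the linear-in-$\vec s$ part of $\phi_{t}$: one must see that the cross term between the base-point displacement $x(t)-x$ and the frame displacement is what produces the Christoffel contribution, and that only the symmetric part of the second-order jet of $\exp^{\nabla}$ enters. This last point is a genuine subtlety rather than a nuisance: since $\sigma_{\nabla}$ depends on $\nabla$ only through its geodesics, the identity holds for the torsion-free part of $\nabla$, consistent with the first structure equation $\d\delta^{i}+\delta^{i}_{j}\w\delta^{j}=0$ in \eqref{structureequations} being precisely the torsion-free condition; the statement is thus to be read for torsion-free $\nabla$. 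A final, purely clerical, point is to align the sign and index conventions relating the Dirac functionals $\delta^{i}_{j}$, the right $G^{\infty}_{q}$-action defining $\omega$, and the chosen convention for the connection form, so that the pulled-back form equals the connection form exactly.
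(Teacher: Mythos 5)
The paper itself offers no proof of Proposition \ref{introprop}: it is imported from \cite[Lemma 18]{diffcyc} and stated with a \emph{qed}, so there is no internal argument to compare yours against; I can only judge the proposal on its merits. On those merits it is essentially correct, and is surely the natural argument. The reduction of the curvature claim to the connection claim is valid, provided you note that it uses not only commutation of $(\sigma_{\nabla})^{*}$ with $\d$ and $\w$ but also the fact that $c\mapsto c(\omega)$ is a map of DGAs (Equation \eqref{omega}, i.e.\ the Maurer--Cartan identity), which converts the algebraic identity $\Delta=\d\delta+\delta\w\delta$ in $A^{*}(\af_{q})$ into an identity of forms on $\Fr_{\infty}(M)$. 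Your coordinate computation is also correct: I verified that the generator $X$ of $u_{0}^{-1}\circ u_{t}$ has constant term $(e^{-1})^{a}_{i}\dot{x}^{i}$ (the solder form) and linear term $\partial X^{a}/\partial s^{b}(0)=(e^{-1})^{a}_{i}\bigl(\dot{e}^{i}_{b}+\Gamma^{i}_{jk}(x)e^{k}_{b}\dot{x}^{j}\bigr)$, which is the local expression of the connection form of $\nabla$; together with the vertical and equivariance checks this pins down the pullback as the principal connection whose horizontal curves are the $\nabla$-parallel frames. Your torsion remark is a genuine and correct sharpening rather than a side issue: $\sigma_{\nabla}$ as defined in \eqref{section} factors through the geodesics of $\nabla$, so the pullback can only be the connection form of the symmetrisation of $\nabla$, and the statement as written is accurate precisely for torsion-free connections --- which is how the paper uses it (Levi-Civita connections and torsion-free Bott connections).

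The one place I would push back is your claim that the sign alignment is ``purely clerical.'' It is real and slightly deeper than bookkeeping: with the paper's literal conventions (the factor $(-1)^{|\alpha|}$ in $\delta^{i}_{\alpha}$, the tautological form defined via $u^{-1}\circ u_{t}$, the principal action by precomposition, and the vector-field bracket implicit in the structure equations \eqref{structureequations}), evaluating $\delta(\omega)$ on the fundamental vector field of $\xi\in\glf(\RB^{q})$ --- take the path $j^{\infty}_{\vec{0}}(u\circ\exp(t\xi))$ --- gives $-\xi$, not $+\xi$, so the pullback comes out as the \emph{negative} of the standard connection form, which would also flip the sign of $c_{i}\mapsto\Tr(R_{\nabla}^{\w i})$ for odd $i$. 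This means the proposition holds only after fixing a mutually consistent set of sign conventions (as \cite{diffcyc} presumably does, and as the $(-1)^{|\alpha|}$ in the Dirac functionals is evidently designed to arrange); a complete proof should carry out that alignment once explicitly --- for instance by checking that the chosen conventions make both the structure equations and the cochain-map property of $c\mapsto c(\omega)$ hold with the same bracket --- rather than asserting it can be done. Since you identified exactly which conventions must be reconciled, I regard this as an incomplete final step in an otherwise correct proof, not a wrong approach.
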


It follows immediately from Proposition \ref{introprop} that the forms $(\sigma_{\nabla})^{*}c_{i}(\omega)\in\Omega^{2i}(\Fr_{1}(M))_{\GL(\RB^{q})}$ from Equation \eqref{c_i} are the Pontryagin forms of $M$ defined with respect to the curvature tensor of $\nabla$, which displays the fundamental relationship between Chern-Weil theory and Gel'fand-Fuks cohomology.
Moreover, since the $c_{i}$ and $h_{i}$ only depend on 2-jets, and $WO_{q} \hookrightarrow A(\af_{q})$ is a quasi-isomorphism, one sees that in cohomology it suffices to work with $\omega^{2}\in\Omega^{1}(\Fr_{2}(M))^{\Diff(M)}\otimes\af_{q}^{1}$.
\end{ex}

The facts of Example \ref{ex-frame characteristic map} generalise to the transverse frame bundles of foliations.  Specifically, the transverse frame bundles of a regular foliation admit tautological forms, which model geometric structures on the normal bundle of the foliation.  This is folklore, but will be made precise after our introduction of Haefliger bundles in Section \ref{univ}.

\subsection{Chern-Weil for regular foliations}

In the early nineteen-seventies, R. Bott showed that for a regular foliation $(M,\FF)$ of codimension $q$, characteristic classes can be obtained in the following manner \cite{bott2}.
Regard the normal bundle $\nuFF := \tBun M/\tFF$ of $\FF$ as a subbundle of $\tBun M$ that is complementary to $\tFF$. (For instance, the orthogonal complement of $\tFF$ with respect to a Riemannian metric on $M$.)
For a vector field $Z$ on $M$, denote by $Z= Z_{\FF}+Z_{\nu}$ its decomposition into leafwise and normal components respectively.

Bott defined in \cite{bott1} connections $\nabla$ on $\nuFF$, now called \emph{Bott connections}, that satisfy the equation
\[
  \nabla_{X} Y = \nabla_{X_{\nu}}Y + [X_{\FF},Y]_{\nu}, \qquad X\in\XF(M),\,Y\in\Gamma(\nuFF)\subset\XF(M) .
\]
The \emph{torsion} of a Bott connection $\nabla$ is the \( \nuFF \)-valued \( 2 \)-form
\( T \) defined by
\[
  T(X,Y):=\nabla_{X} Y_{\nu} - \nabla_{Y} X_{\nu} - [X,Y]_{\nu}
\]
for all $X,Y\in\XF(M)$.  The connection $\nabla$ is said to be \emph{torsion-free} if $T(X,Y) = 0$ for all $X,Y\in\XF(M)$.

Bott connections are flat along leaves, in that their curvature forms $R_{\nabla}$ vanish on restriction to the tangent distribution of leaves.  As a consequence, one has \emph{Bott's vanishing theorem}: for any Bott connection $\nabla$ on $\nuFF$, the associated Chern-Weil characteristic map $\RB[c_{1},\dots,c_{q}]\rightarrow\Omega^{*}(M)$ encoding the Pontryagin classes of $\nuFF$, defined on generators by
\[
  c_{i}\mapsto \Tr(R_{\nabla}^{\w i}),
\]
vanishes on all monomials (in the $c_{i}$) of degree greater than $2q$.

Let $\nabla^{1}$ be a Bott connection on $\nuFF$ and let $\nabla^{0}$ be a connection on $\nuFF$ that is compatible with some Euclidean metric on $\nuFF$.
For $t\in[0,1]$, denote by $\nabla^{t}:=t\nabla^{1}+(1-t)\nabla^{0}$ the affine combination of $\nabla^{0}$ and $\nabla^{1}$.
Bott's vanishing theorem has the following consequence.

\begin{thm}[Bott-Chern-Weil characteristic map]\cite[p. 67-69]{bott2}\label{bottchernweil}
  The map $\lambda_{\nabla^{1},\nabla^{0}} \colon WO_{q}\rightarrow\Omega^{*}(M)$ defined on generators by the formul\ae
  \begin{equation}\label{ci}
    \lambda_{\nabla^{1},\nabla^{0}}(c_{i}):=\Tr(R_{\nabla^{1}}^{\w i})\in\Omega^{2i}(M),
  \end{equation}
  \begin{equation}\label{hi}
    \lambda_{\nabla^{1},\nabla^{0}}(h_{i}):= i\int_{0}^{1}\Tr\big((\nabla^{1}-\nabla^{0})\w R_{\nabla^{t}}^{\w (i-1)}\big)\,dt\in\Omega^{2i-1}(M)
  \end{equation}
  extends to a homomorphism of differential graded algebras whose descent to cohomology does not depend on the choice of Bott connection or metric connection.
\end{thm}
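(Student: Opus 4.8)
The plan is to establish the two assertions separately: first that $\lambda_{\nabla^{1},\nabla^{0}}$ is a well-defined homomorphism of differential graded algebras, and second that its induced map on cohomology is insensitive to the choices. For the first, I would begin with the tautological observation that the expressions \eqref{ci} and \eqref{hi}, extended multiplicatively, define an algebra homomorphism $\tilde\lambda$ out of the \emph{untruncated} algebra $\RB[c_{1},\dots,c_{q}]\otimes\Wedge(h_{1},h_{3},\dots,h_{l})$. To see that $\tilde\lambda$ descends to the quotient $WO_{q}$, I must check that it annihilates the defining ideal, which is generated by the monomials in the $c_{i}$ of total degree exceeding $2q$. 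Such a monomial maps to a product $\Tr(R_{\nabla^{1}}^{\w i_{1}})\cdots\Tr(R_{\nabla^{1}}^{\w i_{k}})$ of form-degree greater than $2q$, and since $\nabla^{1}$ is a Bott connection, this vanishes by Bott's vanishing theorem recalled above; as $\tilde\lambda$ is a ring map, the whole ideal is killed, yielding a well-defined algebra homomorphism $\lambda_{\nabla^{1},\nabla^{0}}\colon WO_{q}\to\Omega^{*}(M)$.

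It then remains to verify that $\lambda_{\nabla^{1},\nabla^{0}}$ is a chain map, and since both $\lambda$ and $\d$ respect products it suffices to check this on the generators $c_{i}$ and $h_{j}$. On the $c_{i}$ one has $\d\lambda(c_{i})=\d\Tr(R_{\nabla^{1}}^{\w i})=0$ by the Bianchi identity, matching $\lambda(\d c_{i})=0$. On $h_{j}$ with $j$ odd, I would invoke the standard Chern--Simons transgression identity $\frac{d}{dt}\Tr(R_{\nabla^{t}}^{\w i})=i\,\d\Tr\big((\nabla^{1}-\nabla^{0})\w R_{\nabla^{t}}^{\w(i-1)}\big)$, which upon integration gives $\d\lambda(h_{j})=\Tr(R_{\nabla^{1}}^{\w j})-\Tr(R_{\nabla^{0}}^{\w j})$. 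Here the hypothesis that $\nabla^{0}$ is a metric connection enters decisively: its curvature is $\so(q)$-valued, so the trace of its $j$-th wedge power vanishes for $j$ odd, leaving $\d\lambda(h_{j})=\Tr(R_{\nabla^{1}}^{\w j})=\lambda(c_{j})=\lambda(\d h_{j})$, as required.

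For the independence statement I would show that any two admissible pairs $(\nabla^{1},\nabla^{0})$ and $(\tilde\nabla^{1},\tilde\nabla^{0})$, each consisting of a Bott connection and a metric connection, induce chain-homotopic maps and hence agree on cohomology. The device is the product foliation: let $p\colon M\times[0,1]\to M$ be the projection and foliate $M\times[0,1]$ by the submanifolds $L\times[0,1]$, where $L$ ranges over the leaves of $\FF$; this is a codimension $q$ foliation $\hat\FF$ with normal bundle $p^{*}\nuFF$, and the pullbacks $p^{*}\nabla^{1},p^{*}\tilde\nabla^{1}$ are Bott connections for $\hat\FF$. Fix a cutoff $\rho$ on $[0,1]$ with $\rho\equiv0$ near $0$ and $\rho\equiv1$ near $1$, and set $\hat\nabla^{1}:=p^{*}\nabla^{1}+\rho\,A$ with $A:=p^{*}\tilde\nabla^{1}-p^{*}\nabla^{1}$. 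Since $A$, as the difference of two $\hat\FF$-Bott connections, vanishes on the whole leafwise distribution $\tBun\hat\FF$, so does $\rho A$, and therefore $\hat\nabla^{1}$ is again $\hat\FF$-Bott. Interpolating the metrics (a convex operation) and choosing a compatible connection $\hat\nabla^{0}$ with the correct restrictions at the two ends, the first part applies to give a DGA map $\Lambda:=\lambda_{\hat\nabla^{1},\hat\nabla^{0}}\colon WO_{q}\to\Omega^{*}(M\times[0,1])$. Naturality of curvature under the slice inclusions $i_{s}\colon M\hookrightarrow M\times[0,1]$ gives $i_{0}^{*}\Lambda=\lambda_{\nabla^{1},\nabla^{0}}$ and $i_{1}^{*}\Lambda=\lambda_{\tilde\nabla^{1},\tilde\nabla^{0}}$, so that, with $K:=\int_{[0,1]}\circ\,\Lambda$, the fibre-integration homotopy formula $i_{1}^{*}-i_{0}^{*}=\d\int_{[0,1]}+\int_{[0,1]}\d$ yields $\lambda_{\tilde\nabla^{1},\tilde\nabla^{0}}-\lambda_{\nabla^{1},\nabla^{0}}=\d K+K\d$.

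I expect the main obstacle to be this last construction: one must check that the interpolated $\hat\nabla^{1}$ truly solves the Bott equation for $\hat\FF$ despite $\rho$ being non-constant along the leaves (the resolution being that $\rho$ multiplies a tensor already annihilating every leafwise direction), and that a metric connection $\hat\nabla^{0}$ with the prescribed endpoint restrictions exists, for which the operative fact is the contractibility of the affine bundle of metric-compatible connections over the convex space of Euclidean metrics on $p^{*}\nuFF$. The remainder is the formal apparatus of Chern--Weil theory combined with Bott's vanishing theorem.
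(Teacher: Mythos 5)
Your proof is correct. The paper does not actually prove this theorem---it is stated as a citation of Bott's lectures---so the relevant benchmark is the classical argument, and yours reproduces it exactly: Bott's vanishing theorem to descend through the degree-$2q$ truncation, the Chern--Simons transgression identity combined with $\Tr\big(R_{\nabla^{0}}^{\w j}\big)=0$ for odd $j$ (the curvature of a metric connection being $\so(q)$-valued in orthonormal frames) for the chain-map property, and the product foliation on $M\times[0,1]$ with fibre integration for independence of the choices. Two minor remarks: the paper's conventions exclude manifolds with boundary, so the homotopy argument is cleanest on $M\times\RB$ with your cutoff $\rho$ (nothing else changes, and your key observation---that $\rho A$ annihilates the leafwise distribution of the product foliation because $A$ is a difference of Bott connections---is exactly the right resolution); and the paper's own, later proof of the closely related Theorem \ref{cwreg} takes a genuinely different route, factoring the Chern--Weil map through sections of the Haefliger bundle and the tautological forms, which is what permits its extension to the singular setting, whereas your argument, like Bott's, is intrinsic to $M$ and does not generalise past the regular case.
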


In \cite{gue}, Guelorget gives an alternative construction of the characteristic map, which depends on a choice of metric and Bott connection on \( \nuFF \).
Our construction to define the characteristic map on Haefliger-singular foliations closely resembles Guelorget's, depending on special singular choices of a metric and Bott connection over the regular locus, so we recall the Guelorget characteristic map.

For a regular foliation \( (M, \FF) \), let \( \nabla \) be a Bott connection and \( \epsilon \) a metric on \( \nuFF \).
On the \( \epsilon \)-orthonormal frame bundle $\Fr_{O}(\nuFF)$ of \( \nuFF \), the Bott connection determines a connection form $\omega\in\Omega^{1}(\Fr_{O}(\nuFF);\mathfrak{gl}(\RB^{q}))$ and its curvature form \( \Omega \).
Let $\omega = \omega_{s}+\omega_{o}$ and $\Omega:=\Omega_{s}+\Omega_{o}$ be their decompositions into symmetric and antisymmetric components.
\begin{thm}[Guelorget characteristic map]\cite{gue}\label{guegorlet}
  The forms defined on $\Fr_{O}(\nuFF)$,
  \begin{equation}\label{cig}
    \lambda_{\nabla,\varepsilon}(c_{i}):=\Tr(\Omega^{\wedge i}),\qquad 1\leq i\leq q
  \end{equation}
  \begin{equation}\label{hig} \lambda_{\nabla,\varepsilon}(h_{i}):=i\Tr\bigg(\int_{0}^{1}\omega_{s}(t\Omega_{s}+\Omega_{o}+(t^{2}-1)\omega_{s}^{\wedge2})^{\wedge(i-1)}dt\bigg),\qquad i\leq q,\,\text{i odd} ,
  \end{equation}
  are $\O(q)$-basic, so descend to forms on $M\cong\Fr_{O}(\nuFF)/\O(q)$.
  The resulting map $\lambda_{\nabla,\varepsilon} \colon WO_{q}\rightarrow \Omega^{*}(M)$ is a homomorphism of differential graded algebras whose descent to cohomology is independent of the choice of Bott connection $\nabla$ and metric $\varepsilon$.
\end{thm}

Guelorget showed that her characteristic map coincides with the Bott-Chern-Weil map provided one makes careful choices.
To this end, a choice of Riemannian metric $g$ on $M$ can be used to construct the following:
\begin{enumerate}
\item an identification of $\nuFF$ with the orthogonal complement of $\tBun\FF$,
\item a metric $\epsilon$ on $\nuFF\subset\tBun M$,
\item a torsion-free Bott connection, the \emph{Bott Levi-Civita connection} $\nabla^{1}$, given by
  \[
    \nabla^{1}_{X} Y:=[X_{\FF},Y]_{\nu}+(\nabla^{LC}_{X_{\nu}} Y)_{\nu},
  \]
  where $\nabla^{LC}$ is the Levi-Civita connection associated to $g$, and
\item an \( \epsilon \)-compatible connection \( \nabla^{0} \), determined by taking the antisymmetric component of the connection form of \( \nabla^{1} \) relative any orthonormal framing.
\end{enumerate}
\begin{propdef}\cite[Remarque (c)]{gue}\label{gue2}
  With these choices of geometric data, the homomorphism $\lambda_{\nabla^{0},\epsilon}$ of Theorem \ref{guegorlet} is equal to the Bott-Chern-Weil homomorphism $\lambda_{\nabla^{1},\nabla^{0}}$ of Theorem \ref{bottchernweil}.
\end{propdef}

\section{Classifying spaces and the universal characteristic map}\label{univ}

We present in this section a diffeological%
\footnote{Diffeology is a categorically well-behaved formalisation of smoothness, in which both manifolds and classifying spaces can be placed on the same footing.
  A brief recap is provided in the appendix, or the reader may safely replace ``diffeological" with ``smooth" in all of what follows.}
version of the classical constructions of characteristic classes, both via universal classifying spaces and the Chern-Weil theory.
The diffeological language will make it straightforward to generalise these constructions to singular Haefliger structures in the following sections.
Full details are provided in Appendix \ref{app: smooth Haefliger classifying}.

\subsection{Classifying space for diffeological groupoids}

Recall that a \emph{groupoid} is a small category with inverses.  The range (or target) and source maps of a groupoid will always be denoted $r$ and $s$ respectively.
A groupoid is said to be \emph{diffeological} if its morphism set is a diffeological space, with object set equipped, via the unit map, with the subspace diffeology, and for which the range, source, composition and inversion maps are all smooth maps. A diffeological groupoid is \emph{\'{e}tale} if the source and target maps are local homeomorphisms with respect to the D-topology.
Two examples follow.

\begin{ex}[\v{C}ech groupoid]
	Let $M$ be a diffeological space and $\UU = \{U_{\alpha}\}_{\alpha\in\AF}$ an open cover of $M$.  The \emph{\v{C}ech groupoid} $\check{\UU}$ of $\UU$ is the groupoid whose morphism space is the disjoint union
	\[
	\check{\UU}^{(1)}:=\bigsqcup_{(\alpha,\beta)\in\AF^2} U_{\alpha}\cap U_{\beta} .
	\]
	An element of \( \check{\UU} \) is an ordered triple \( (x, \alpha, \beta) \) with \( x \in U_{\alpha} \cap U_{\beta} \), so that the source and target maps are defined by the rules  $s(x,\alpha,\beta) = (x,\beta,\beta)$ and $r(x,\alpha,\beta) = (x,\alpha,\alpha)$ respectively.
	In particular, the unit space \( \check{\UU} \) is the disjoint union of the degenerate intersections $U_{\alpha}\cap U_{\alpha}$.
	Composition is given by
	\[
	(x,\alpha,\beta)\cdot(x,\beta,\delta):=(x,\alpha,\delta).
	\]
	(Note the convention that morphisms act right to left.)
	As a disjoint union of diffeological spaces, $\check{\UU}$ inherits a canonical diffeology (the sum diffeology \cite[Section 1.39]{diffeology}), with respect to which it is an \'{e}tale diffeological groupoid.
\end{ex}

\begin{ex}[Haefliger groupoid]
	The \emph{Haefliger groupoid} $\Gamma_{q}$ is the groupoid of germs of local diffeomorphisms of $\RB^{q}$ \cite{ha3}.  For $\gamma\in\Gamma_{q}^{(1)}$ with source $x\in\RB^{q}$, representative local diffeomorphism $g \colon \dom(g)\rightarrow\RB^{q}$, and any open neighbourhood $U$ of $x$ contained in $\dom(g)$, define an open neighbourhood of \( \gamma \) by
	\[
	\NN(\gamma,g,U):=\{\germ_{x'}(g) \colon x'\in U\} .
	\]
	The collection of these for all \( (\gamma, g, U) \) determine the basis for a topology on $\Gamma_{q}$.
	This topology induces the standard topology on the unit space \( \RB^{q} \), with respect to which the range and source are local homeomorphisms.
	Therefore, the manifold structure of $\RB^{q}$ induces a (non-Hausdorff) manifold structure on $\Gamma_{q}$, which is thus an \'{e}tale diffeological groupoid.
\end{ex}

The following definition is a diffeological analogue of Haefliger's, in \cite[Section 2]{ha6}.

\begin{defn}\label{Gammacocycle}
  Let $X$ be a diffeological space and $\Gamma$ a diffeological groupoid.
  A \textbf{$\Gamma$-cocycle on $X$ over \( \UU \)} is a smooth morphism $h \colon \check{\UU}\rightarrow\Gamma$ of diffeological groupoids, with $\UU$ a D-open cover of $X$.
  Two $\Gamma$-cocycles over respective open covers $\UU$ and $\VV$ are \textbf{equivalent} if they are restrictions to $\check{\UU}$ and $\check{\VV}$ of a smooth $\Gamma$-cocycle over $\UU\cup\VV$.  An equivalence class of $\Gamma$-cocycles is called a \textbf{$\Gamma$-structure}.  Two $\Gamma$-structures are \textbf{concordant} if there exists a $\Gamma$-structure on $X\times[0,1]$ inducing the given structures on $X \cong X\times\{0\}$ and $X\cong X\times\{1\}$ respectively.
\end{defn}

Note that a Haefliger structure is a $\Gamma_{q}$-structure of codimension $q$.
Given a smooth map \( \eta \colon Y \to X \) of diffeological spaces, the pullback of \( h \) is the composition \( h \circ \eta \colon \eta^{*}\tilde{\UU} \to \UU \to \Gamma \), where \( \eta^{*}\UU \) is the pullback covering on \( Y \).

The next definition specifies the kinds of $\Gamma$-structures to which the classifying theorem will apply; it is a diffeological analogue of that found in \cite[Section 6]{ha6}.

\begin{defn}\label{numerability}
  For a diffeological space \( X \) with its D-open topology, a smooth, countable partition of unity $\{\lambda_{\alpha}\}_{\alpha\in\NB}$ is \textbf{locally finite} if the covering of \( X \) by supports $\overline{\lambda_{\alpha}^{-1}(0,1]}$ is locally finite, and \textbf{subordinate} to an open cover $\{U_{\alpha}\}_{\alpha\in\NB}$ of $X$ if $\overline{\lambda_{\alpha}^{-1}(0,1]}\subset U_{\alpha}$ for all $\alpha$.
  A countable open cover of $X$ is \textbf{smoothly numerable} if it admits a subordinate, locally finite, smooth partition of unity.
  A smooth $\Gamma$-structure on $X$ is \textbf{smoothly numerable} if it admits a representative cocycle over a countable, smoothly numerable open cover, and two smooth, numerable $\Gamma$-structures on $X$ are said to be \textbf{smoothly, numerably concordant} if there exists a smoothly numerable concordance between them.
\end{defn}

The following theorem is a generalisation of Haefliger's classifying theorem \cite[Theorem 7]{ha6} to the diffeological setting. Its proof is different from those of existing similar theorems in nontrivial ways, and can be found in Appendix \ref{app: smooth Haefliger classifying}.

\begin{thm}\label{classify}
  Let $\Gamma$ be a diffeological groupoid.
  There exists a diffeological space $B\Gamma$, equipped with a tautological smoothly numerable $\Gamma$-structure $\gamma$, for which the following hold:
	\begin{enumerate}
		\item For any smoothly numerable $\Gamma$-structure $h$ on a diffeological space $X$, there is a smooth map $\eta \colon X\rightarrow B\Gamma_{q}$ such that $h = \eta^{*}\gamma$.
		\item If $\eta_{0},\eta_{1} \colon X\rightarrow B\Gamma$ are smooth maps, then the $\Gamma$-structures $\eta_{0}^{*}\gamma$ and $\eta_{1}^{*}\gamma$ are smoothly, numerably concordant if and only if $\eta_{0}$ and $\eta_{1}$ are smoothly homotopic.
	\end{enumerate}
\end{thm}

\subsection{Universal characteristic map via Haefliger bundles}

In this section we define for any smoothly numerable Haefliger structure on a diffeological space the associated \emph{Haefliger bundle}, which is a natural, locally trivial principal $G^{k}_{q}$-bundle%
\footnote{In the diffeological setting, principal does not imply locally trivial, but only locally trivial along plots \cite[Section 8.13]{diffeology}.}
on the space.
As we will see in the next section, the Haefliger bundle of a regular Haefliger structure is isomorphic to the transverse frame bundle of the associated regular foliation.
Furthermore, while transverse frames are not well-defined over singularities, Haefliger bundles are defined globally even in the singular case.

Consider a smoothly numerable Haefliger cocycle $h \colon \check{\UU}\rightarrow\Gamma_{q}$ on a diffeological space $X$.  For $k\in\NB\cup\{\infty\}$, the \emph{Haefliger $k$-frame bundle of $h$}, or simply the \emph{$k$-Haefliger bundle}, is the smooth quotient
\begin{equation}\label{haefy1}
\Fr_{k}(h):=\bigg(\bigsqcup_{\alpha\in\NB}(s\circ h_{\alpha\alpha})^{*}\Fr_{k}(\RB^{q})\bigg)\big/\sim ,
\end{equation}
the equivalence relation $\sim$ given by $(x,\alpha,\varphi) \sim (x,\beta,h_{\beta\alpha}(x)\cdot\varphi)$ for all $x\in U_{\alpha}\cap U_{\beta}$.
Note moreover that by the $\Diff(\RB^{q})$-invariance of the tautological forms $\omega^{k}$ on $\Fr_{k}(\RB^{q})$, the pullbacks $(s\composed h_{\alpha\alpha})^{*}\omega^{k}$ glue to give an $\af_{q}^{k}$-valued 1-form $\omega^{k}_{h}$ on $\Fr_{k}(h)$.  We have the following result.

\begin{prop}\label{functorial}
	Let $X$ be a diffeological space and let $k\in\NB\cup\{\infty\}$.
	\begin{enumerate}
		\item If $\UU$ is an open cover of $X$ and $h \colon \check{\UU}\rightarrow\Gamma_{q}$ represents a smoothly numerable Haefliger structure on $X$, then $\Fr_{k}(h)$ is a smooth, principal $G^{k}_{q}$-bundle over $X$, locally trivial over $\UU$.
		\item If $\eta \colon X\rightarrow B\Gamma_{q}$ is a smooth map, then $\Fr_{k}(\eta^{*}\gamma)$ and $\eta^{*}\Fr_{k}(\gamma)$ are canonically isomorphic as principal $G^{k}_{q}$-bundles over $X$, and under this isomorphism $\eta^{*}\omega_{\gamma}^{k}$ identifies with $\omega_{\eta^{*}\gamma}^{k}$.
		\item If $h_{0}$ and $h_{1}$ are smoothly, numerably concordant Haefliger structures on $X$, then $\Fr_{k}(h_{0})$ and $\Fr_{k}(h_{1})$ are isomorphic as principal $G^{k}_{q}$-bundles over $X$.
	\end{enumerate}
	As a consequence, isomorphism classes of Haefliger bundles are functorial for diagrams
	\begin{center}
	\begin{tikzcd}[row sep={8mm,between origins}]
		X  \ar[dd] \ar[dr] & \\ & B\Gamma_{q} \\ Y \ar[ur] &
	\end{tikzcd}
	\end{center}
	of smooth maps that commute up to homotopy.
\end{prop}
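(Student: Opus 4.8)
The plan is to establish the three items in turn and read off the functoriality consequence from items (2) and (3); the unifying idea is that $\Fr_{k}(-)$ is built clutch-wise from the fixed bundle $\Fr_{k}(\RB^{q})$, so every assertion reduces to naturality together with a single homotopy-invariance statement for the universal bundle $\Fr_{k}(\gamma)$ over $B\Gamma_{q}$. For item (1), I would recognize $\Fr_{k}(h)$ as a clutching construction: the affine structure of $\RB^{q}$ gives a canonical trivialization $\Fr_{k}(\RB^{q})\cong\RB^{q}\times G^{k}_{q}$ (translate a $k$-jet of embedding $u$ with $u(0)=x$ so that it fixes $0$), so each summand $(f_{\alpha})^{*}\Fr_{k}(\RB^{q})$, where $f_{\alpha}:=s\circ h_{\alpha\alpha}$, is a trivial $G^{k}_{q}$-bundle over $U_{\alpha}$. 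Axiom (2) of Definition \ref{haefycocycle} says the germ $h_{\beta\alpha}(x)$ carries $f_{\alpha}(x)$ to $f_{\beta}(x)$; conjugating by the relevant translations gives a germ fixing $0$, whose $k$-jet is an element $\bar h_{\beta\alpha}(x)\in G^{k}_{q}$, and in these trivializations the gluing relation becomes left multiplication by $\bar h_{\beta\alpha}(x)$. It then remains to check that $x\mapsto\bar h_{\beta\alpha}(x)$ is smooth (from smoothness of the morphism $h:\check{\UU}\rightarrow\Gamma_{q}$ and of jet extraction) and satisfies the $G^{k}_{q}$-valued cocycle identity (which is exactly the groupoid cocycle condition, axiom (3)); the diffeological clutching construction then produces a smooth principal $G^{k}_{q}$-bundle trivial over each $U_{\alpha}$, and smooth numerability of $\UU$ promotes this to local triviality along plots.

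Item (2) is pure naturality. The charts and transition data of $\eta^{*}\gamma$ are the $\eta$-pullbacks of those of $\gamma$, and since $(f_{\alpha}\circ\eta)^{*}\Fr_{k}(\RB^{q})=\eta^{*}\big((f_{\alpha})^{*}\Fr_{k}(\RB^{q})\big)$ while pullback of bundles commutes with gluing along pulled-back clutching data, the canonical isomorphism $\Fr_{k}(\eta^{*}\gamma)\cong\eta^{*}\Fr_{k}(\gamma)$ appears summand by summand. The forms match because $\omega^{k}_{\eta^{*}\gamma}$ is glued from $(f_{\alpha}\circ\eta)^{*}\omega^{k}$ and $\eta^{*}\omega^{k}_{\gamma}$ from $\eta^{*}(f_{\alpha})^{*}\omega^{k}$, which coincide by functoriality of the de Rham pullback, $(f_{\alpha}\circ\eta)^{*}=\eta^{*}\circ(f_{\alpha})^{*}$, compatibly with the identification. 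This is routine bookkeeping rather than a genuine difficulty.

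For item (3), Theorem \ref{classify} lets me write $h_{i}=\eta_{i}^{*}\gamma$ for smooth $\eta_{i}:X\rightarrow B\Gamma_{q}$ with $\eta_{0}$ smoothly homotopic to $\eta_{1}$; by item (2) it then suffices to prove $\eta_{0}^{*}\Fr_{k}(\gamma)\cong\eta_{1}^{*}\Fr_{k}(\gamma)$. Choosing a smooth homotopy $H:X\times[0,1]\rightarrow B\Gamma_{q}$, the bundle $H^{*}\Fr_{k}(\gamma)\cong\Fr_{k}(H^{*}\gamma)$ over $X\times[0,1]$ has these two as its end-restrictions, and $H^{*}\gamma$ is smoothly numerable, so item (1) makes it locally trivial over a smoothly numerable cover. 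The consequence then falls out: in the displayed diagram, with $\phi:X\rightarrow Y$, $\eta_{X}:X\rightarrow B\Gamma_{q}$ and $\eta_{Y}:Y\rightarrow B\Gamma_{q}$ satisfying $\eta_{Y}\circ\phi\simeq\eta_{X}$, the Haefliger bundle of $Y$ pulls back along $\phi$ to $(\eta_{Y}\circ\phi)^{*}\Fr_{k}(\gamma)\cong\eta_{X}^{*}\Fr_{k}(\gamma)\cong\Fr_{k}(\eta_{X}^{*}\gamma)$, the Haefliger bundle of $X$, proving functoriality of isomorphism classes.

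The one non-formal ingredient, and the main obstacle, is the diffeological homotopy invariance invoked in item (3): that a principal $G^{k}_{q}$-bundle over $X\times[0,1]$ which is locally trivial over a smoothly numerable cover restricts to isomorphic bundles over the two ends. In the smooth and topological settings this rests on a numerable partition-of-unity argument, and I would expect to reconstruct it diffeologically from the smooth partition of unity furnished by numerability (or to transport it through the D-topology); everything else in the proposition is naturality and the clutching construction.
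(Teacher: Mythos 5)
Your proposal is correct and takes essentially the same route as the paper: item (1) via the triviality of $\Fr_{k}(\RB^{q})\rightarrow\RB^{q}$ and the clutching construction, item (2) via naturality of pullbacks, and item (3) by reducing to homotopy invariance of diffeological principal bundles that are locally trivial over a smoothly numerable cover. The one ingredient you flag as the ``main obstacle'' --- that such a bundle over $X\times[0,1]$ has isomorphic restrictions to the two ends --- is precisely what the paper resolves by citing \cite[Lemma 3.11]{magnot3}, observing that the smooth numerability hypothesis supplies the cover required there, so your partition-of-unity expectation is exactly how the gap is closed.
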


\begin{proof}
	Item (1) follows from the triviality of $\Fr_{k}(\RB^{q})\rightarrow\RB^{q}$, while item (2) follows from the naturality of pullbacks.  The final item follows from \cite[Lemma 3.11]{magnot3}.  Note that the hypothesis there that $X$ be Hausdorff, second-countable and smoothly paracompact  is required only for the existence of a smoothly numerable open cover over which the bundle is locally trivial.  In our setting, this follows from the hypothesis that $h_{0}$ and $h_{1}$ are smoothly numerable Haefliger structures.
\end{proof}

The tautological form $\omega^{\infty}_{h}$ on the Haefliger bundle $\Fr_{\infty}(h)\rightarrow X$ induces a homomorphism
\[
A^{*}(\af_{q})_{\O(q)}\rightarrow\Omega^{*}(\Fr_{\infty}(h)/\O(q)).
\]
Thus, given a smooth section $X\rightarrow\Fr_{\infty}(h)/\O(q)$, one obtains a homomorphism $A^{*}(\af_{q})_{\O(q)}\rightarrow\Omega^{*}(X)$ of DGAs.  Our next theorem shows that, despite the potentially pathological topology of $X$, such sections are in abundance, and their induced maps on cohomology unambiguous.

\begin{thm}\label{sections}
	Let $h$ be a smoothly numerable Haefliger structure on a diffeological space $X$.  Then $\Fr_{\infty}(h)/\O(q)\rightarrow X$ admits smooth sections, and any two such sections are smoothly homotopic.
\end{thm}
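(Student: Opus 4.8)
The plan is to exploit the tower structure of $\Fr_{\infty}(h)$ and build sections one jet-order at a time, using smooth numerability to supply a partition of unity at each stage. Since $G^{\infty}_{q}=\varprojlim_{k}G^{k}_{q}$ and $\O(q)$ is compact, one has $\Fr_{\infty}(h)/\O(q)=\varprojlim_{k}\Fr_{k}(h)/\O(q)$ as diffeological spaces, so by the universal property of the projective-limit diffeology a smooth section of $\Fr_{\infty}(h)/\O(q)\to X$ is the same as a compatible family of smooth sections $s_{k}\colon X\to\Fr_{k}(h)/\O(q)$, each $s_{k-1}$ being the image of $s_{k}$ under the projection $\Fr_{k}(h)/\O(q)\to\Fr_{k-1}(h)/\O(q)$. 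The resulting tower $\cdots\to\Fr_{k+1}(h)/\O(q)\to\Fr_{k}(h)/\O(q)\to\cdots\to\Fr_{1}(h)/\O(q)\to X$ has controllable fibres: the bottom bundle $\Fr_{1}(h)/\O(q)\to X$ has fibre $\GL(\RB^{q})/\O(q)$, the open convex cone $\mathrm{Sym}^{+}$ of positive-definite symmetric matrices, on which $\GL(\RB^{q})$ acts by the (linear, hence convexity-preserving) congruence action; and, as recorded in Example \ref{ex-frame characteristic map}, each higher quotient $\Fr_{k+1}(h)/\O(q)\to\Fr_{k}(h)/\O(q)$ has fibre the abelian vector group $V_{k}:=\ker\bigl(G^{k+1}_{q}\to G^{k}_{q}\bigr)$ of top-order Taylor coefficients, so is an affine bundle whose transition maps act affinely on $V_{k}$. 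By Proposition \ref{functorial}(1) every bundle in the tower is locally trivial over $\UU$.

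For existence I would build the $s_{k}$ inductively. Fix a smooth, locally finite partition of unity $\{\lambda_{\alpha}\}$ subordinate to $\UU$ (available by smooth numerability). At the bottom, choose local sections $s^{\alpha}_{1}$ over each $U_{\alpha}$ (available by local triviality); then $s_{1}:=\sum_{\alpha}\lambda_{\alpha}s^{\alpha}_{1}$ is a well-defined global section, because the fibrewise convex combination in $\mathrm{Sym}^{+}$ is independent of trivialisation and lands back in the convex cone. Given $s_{k}$, the pullback affine bundle $s_{k}^{*}\bigl(\Fr_{k+1}(h)/\O(q)\bigr)\to X$ is again locally trivial over $\UU$; choosing local sections $s^{\alpha}_{k+1}$ of this pullback (which automatically lift $s_{k}$) and setting $s_{k+1}:=\sum_{\alpha}\lambda_{\alpha}s^{\alpha}_{k+1}$ defines a global smooth section lifting $s_{k}$, since affine combinations with $\sum_{\alpha}\lambda_{\alpha}=1$ are preserved by the affine transition maps. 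All the combinations are smooth operations composed with the smooth $\lambda_{\alpha}$, so each $s_{k}$ is genuinely plot-respecting; assembling the compatible family yields the desired section $s_{\infty}$.

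For uniqueness up to homotopy, given two sections $s,s'$ with components $s_{k},s'_{k}$, I would produce a homotopy $H\colon X\times[0,1]\to\Fr_{\infty}(h)/\O(q)$ by the same inductive scheme over the base $X\times[0,1]$. At the bottom the straight-line path $H_{1}(x,t):=(1-t)s_{1}(x)+t\,s'_{1}(x)$ is a homotopy of sections, since $s_{1}(x),s'_{1}(x)$ lie in the common convex fibre $\mathrm{Sym}^{+}$ over $x$. Inductively, supposing $H_{k}$ has been built with $H_{k}(\cdot,0)=s_{k}$ and $H_{k}(\cdot,1)=s'_{k}$, the lift to level $k+1$ amounts to extending the prescribed boundary sections $s_{k+1},s'_{k+1}$ over $X\times\{0,1\}$ to a global section of the affine bundle $H_{k}^{*}\bigl(\Fr_{k+1}(h)/\O(q)\bigr)\to X\times[0,1]$. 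This is a standard softness argument: pick any global section, measure its discrepancy from $s_{k+1},s'_{k+1}$ at the two ends as sections of the associated vector bundle with fibre $V_{k}$, and use the partition of unity (pulled back to $X\times[0,1]$) to interpolate the correction. Assembling $\{H_{k}\}$ gives the required smooth homotopy.

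The main obstacle, and the point needing genuine care, is the interplay between the diffeological pathology of $X$ and the structure of the fibres. One must verify that the convex and affine combinations at each stage are \emph{transition-invariant}, so that the partition-of-unity constructions descend to well-defined global sections rather than depending on a local trivialisation; this is exactly where the specific congruence and conjugation actions of $G^{\infty}_{q}$ on $\mathrm{Sym}^{+}$ and on $V_{k}$ are essential. One must also confirm the diffeological bookkeeping: that the identification $\Fr_{\infty}(h)/\O(q)=\varprojlim_{k}\Fr_{k}(h)/\O(q)$ holds (using compactness of $\O(q)$), that the inductively built maps are smooth for the diffeology on $X$ (where only plots, not points, are available), and that a compatible family of plots into the tower assembles to a plot into the limit. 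Ordinary topological obstruction theory is unavailable precisely because $X$ need not be a manifold, so the smooth numerability hypothesis, furnishing the partition of unity, does all the heavy lifting.
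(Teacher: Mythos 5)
Your proposal is correct in outline, but it takes a genuinely different route from the paper. The paper (Appendix \ref{appB}) works universally: it identifies $\Fr_{\infty}(\gamma)$ with the classifying space $B\Gamma_{q,\infty}$ of the action groupoid $\Gamma_{q}\ltimes\Fr_{\infty}(\RB^{q})$, constructs one explicit section of $\Fr_{\infty}(\gamma)/\O(q)\rightarrow B\Gamma_{q}$ semi-simplicially \`a la Dupont using a $G^{1}_{q}$-equivariant exponential diffeomorphism $\jetexp\colon \tBun_{\O(q)}(G^{\infty}_{q}/\O(q))\rightarrow G^{\infty}_{q}/\O(q)$ (Lemmas \ref{exp} and \ref{exp2}), and then pulls this section back along the classifying map $\eta$ furnished by Theorem \ref{classify} (using $h=\eta^{*}\gamma$ and Proposition \ref{functorial}); homotopies come from fibrewise ``exponentiated-linear'' interpolation. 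You instead work directly on $X$, inducting up the jet tower and using the partition of unity supplied by smooth numerability at every stage: convex combinations in the cone $\GL(q,\RB)/\O(q)\cong\mathrm{Sym}^{+}$ at level one, affine combinations in the $V_{k}$-affine bundles above, and the corresponding soft interpolation arguments for the homotopy. Both proofs ultimately rest on the same structural fact, the splitting $G^{\infty}_{q}\cong G^{1}_{q}\ltimes N$ with $G^{1}_{q}/\O(q)$ a convex cone and $N$ a projective limit of vector groups; the paper packages it as an equivariant exponential, you as fibrewise convexity/affineness. What the paper's route buys is a single canonical universal section with automatic compatibility under pullback (convenient for functoriality of the characteristic map), and it concentrates all hard work on $B\Gamma_{q}$, whose canonical cover and cocycle are explicit. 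What your route buys is elementarity and self-containedness: no simplicial machinery, no identification with $B\Gamma_{q,\infty}$, and in fact no need to invoke the classifying map at all, in direct parallel with the classical existence proofs for metrics and connections.

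Two points in your write-up deserve correction, though neither is fatal. First, the identification $\Fr_{\infty}(h)/\O(q)\cong\varprojlim_{k}\Fr_{k}(h)/\O(q)$ is not a consequence of compactness of $\O(q)$; quotients and projective limits do not commute in the diffeological category in general. The honest proof goes through local triviality over $\UU$ (Proposition \ref{functorial}(1)) to reduce to the fibre, and then through the splitting $G^{k}_{q}\cong G^{1}_{q}\ltimes N_{k}$, under which the right $\O(q)$-action touches only the first factor, so that $G^{k}_{q}/\O(q)\cong (G^{1}_{q}/\O(q))\times N_{k}$ and the limit can be taken factorwise---exactly the mechanism of the paper's Lemma \ref{exp}. (Injectivity of the comparison map needs only freeness of the jet-group actions, not compactness.) Second, your citation of Example \ref{ex-frame characteristic map} for the statement that $\ker(G^{k+1}_{q}\rightarrow G^{k}_{q})$ is an abelian vector group is slightly off: that example records that the fibre of $G^{k}_{q}\rightarrow\GL(\RB^{q})$ is a contractible nilpotent group, which is a different (weaker, for your purposes) statement; the fact you need, that the kernel is isomorphic as a group to the vector space of symmetric $(k+1)$-linear maps $\RB^{q}\rightarrow\RB^{q}$, is standard but should be stated and used as such, since the affine structure on the fibres of $\Fr_{k+1}(h)/\O(q)\rightarrow\Fr_{k}(h)/\O(q)$, and its invariance under the transition maps (whose linear parts are adjoint actions), is precisely what makes your partition-of-unity gluing well defined.
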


\begin{proof}
  It suffices to prove the result for $X = B\Gamma_q$, by the the universal property of pullbacks.
  The theorem is then essentially a consequence of the fact that the fibre $G_{q}^{\infty}/\O(q)$ of the bundle is (smoothly) contractible.  When taken with the strong topology, $B\Gamma_{q}$ has the homotopy type of a CW complex \cite[Theorem 1.6]{ha3}, and the existence of \emph{continuous} sections is then classical.  An explicit construction of a tautological \emph{smooth} section when $B\Gamma_q$ is regarded as a diffeological space is given in Appendix \ref{app: sections of Haefliger bundles}.
\end{proof}

By the homotopy-invariance of de Rham cohomology, Theorem \ref{sections} enables the following definition of de Rham characteristic maps for smoothly numerable Haefliger structures.

\begin{defn}\label{universalmap}
	If $h$ is a smoothly numerable Haefliger structure on a diffeological space $X$, the \textbf{characteristic map} associated to $h$ is the homomorphism $H^{*}(A(\af_{q});\O(q))\rightarrow H^{*}(\Omega(X))$ induced by the composite
	\[
	A^{*}(\af_{q})_{\O(q)}\xrightarrow{\omega_{h}^{\infty}}\Omega^{*}(\Fr_{\infty}(h)/\O(q))\xrightarrow{\sigma^{*}}\Omega^{*}(X),
	\]
	where $\sigma \colon X\rightarrow\Fr_{\infty}(h)/\O(q)$ is any smooth section.  In particular, the characteristic map $u \colon H^{*}(A(\af_{q});\O(q))\rightarrow H^{*}(\Omega(B\Gamma_{q}))$ associated to the tautological Haefliger structure $\gamma$ on $B\Gamma_{q}$ is called the \textbf{universal characteristic map}.
\end{defn}

It follows easily from Proposition \ref{functorial} that characteristic maps for smoothly numerable Haefliger structures are functorial under smooth maps. In particular if $h = \eta^{*}\gamma$ is the smoothly numerable Haefliger structure defined by a smooth map $\eta \colon X\rightarrow B\Gamma_{q}$, then the characteristic map associated to $h$ is equal to 
\[ \eta^{*}\circ u \colon H^{*}(A(\af_{q});\O(q))\rightarrow H^{*}(\Omega(B\Gamma_{q})))\rightarrow H^{*}(\Omega(X)) . \]

The following is a corollary of Theorems \ref{sections} and \ref{woq}, and the fact that the map on \( WO_{q} \) factors through the \( 2 \)-frame bundle.
\begin{cor}\label{cor: WOq factors 2jets}
  Let $X$ be a diffeological space and $h = \eta^{*}\gamma$ a smoothly numerable Haefliger structure on $X$.
  If $\sigma \colon X\rightarrow\Fr_{2}(h)/\O(q)$ is a smooth section,
  the homomorphism of DGAs
  \[
    WO_{q}\xrightarrow{\omega^{2}_{h}}\Omega^{*}(\Fr_{2}(h)/\O(q))\xrightarrow{\sigma^{*}}\Omega^{*}(X)
  \]
  induces a map on cohomology equal to the characteristic map $\eta^{*}\circ u \colon H^{*}(WO_{q})\rightarrow H^{*}(\Omega(X))$.
\end{cor}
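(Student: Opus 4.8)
The plan is to realise both maps in the statement as pullbacks, along \emph{homotopic} sections, of forms on $\Fr_2(h)/\O(q)$ that already agree at the level of forms, and then to invoke homotopy invariance of de Rham cohomology. The essential geometric input is the jet projection $\pi\colon\Fr_\infty(h)\rightarrow\Fr_2(h)$ arising from the projective system $G^\infty_q\rightarrow G^2_q$, which descends to a smooth bundle map $\bar\pi\colon\Fr_\infty(h)/\O(q)\rightarrow\Fr_2(h)/\O(q)$ over $X$. First I would record that $\omega^2_h$ is genuinely defined on all of $WO_q$: although the curvature functional $\Delta^i_j=-\delta^i_{jk}\w\delta^k$ formally involves second-order jet data, the structure equations \eqref{structureequations} rewrite it as $\Delta(\omega)=\d\delta(\omega)+\delta(\omega)^{\w 2}$, so that $\Delta(\omega)$, and hence every generator $c_i,h_j$ of $WO_q$ (which by Example \ref{ex-frame characteristic map} is built from $\delta_s$ and $\Delta$), is expressible through the $\af^1_q$-valued form $\omega^2$ and its exterior derivative. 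This is precisely the assertion, recorded in Example \ref{ex-frame characteristic map}, that the generators depend only on $2$-jets.

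Next I would establish the form-level identity
\[
\omega^\infty_h(w)=\bar\pi^{*}\,\omega^2_h(w),\qquad w\in WO_q.
\]
Unwinding the definitions of the tautological forms, the projection $\pi$ pulls $\omega^2$ back to the $\af^1_q$-component of $\omega^\infty$, so that $\delta(\omega^\infty)=\pi^{*}\delta(\omega^2)$; since pullback commutes with $\d$ and with $\w$, the rewritten curvature yields $\Delta(\omega^\infty)=\pi^{*}\Delta(\omega^2)$, and the identity then propagates to all of $WO_q$ through the polynomial and integral formul\ae \eqref{c_i}, \eqref{hj}. As every $\omega^2_h(w)$ with $w\in WO_q$ is $\O(q)$-basic, this identity descends along $\bar\pi$ to forms on $\Fr_2(h)/\O(q)$ and $\Fr_\infty(h)/\O(q)$.

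Finally I would compare sections. By Definition \ref{universalmap} and the remark following it, the characteristic map of $h=\eta^{*}\gamma$ equals $\eta^{*}\circ u$, and on $WO_q\subset A^{*}(\af_q)_{\O(q)}$ (identified via the Bott--Haefliger quasi-isomorphism) it is computed as the cohomology class of
\[
\sigma_\infty^{*}\omega^\infty_h=\sigma_\infty^{*}\bar\pi^{*}\omega^2_h=(\bar\pi\circ\sigma_\infty)^{*}\omega^2_h,
\]
for any smooth section $\sigma_\infty$ of $\Fr_\infty(h)/\O(q)\rightarrow X$. Now both $\bar\pi\circ\sigma_\infty$ and the given $\sigma$ are smooth sections of $\Fr_2(h)/\O(q)\rightarrow X$, whose fibre $G^2_q/\O(q)$ is contractible: it fibres over the contractible space $\GL(\RB^q)/\O(q)$ with contractible nilpotent fibre $\ker(G^2_q\rightarrow\GL(\RB^q))$. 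The argument of Theorem \ref{sections} (equivalently Appendix \ref{appB}) therefore applies verbatim to show that any two sections of $\Fr_2(h)/\O(q)\rightarrow X$ are smoothly homotopic, whence homotopy invariance of de Rham cohomology gives $[(\bar\pi\circ\sigma_\infty)^{*}\omega^2_h]=[\sigma^{*}\omega^2_h]$ on $H^{*}(WO_q)$, which is the claimed equality.

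The main obstacle is the section-comparison step: one must verify that the proof of Theorem \ref{sections} uses only contractibility of the fibre, and so transfers to the finite-order bundle $\Fr_2(h)/\O(q)$, and that the homotopies it produces remain smooth in the diffeological sense over the possibly pathological space $X$. The form-level identity, by contrast, is a routine consequence of the structure equations together with the compatibility of the tautological forms across the jet tower.
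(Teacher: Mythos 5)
Your proposal is correct and takes essentially the same route the paper intends: the paper states this corollary without proof, as an immediate consequence of Theorem \ref{sections}, the observation in Example \ref{ex-frame characteristic map} that the generators of $WO_{q}$ factor through $2$-jets, and the functoriality remark following Definition \ref{universalmap}. Your three steps --- the compatibility $\omega^{\infty}_{h}(w)=\bar\pi^{*}\omega^{2}_{h}(w)$ along the jet projection, and the smooth homotopy between any two sections of $\Fr_{2}(h)/\O(q)\rightarrow X$ obtained by transferring the contractible-fibre (exponentiated-linear homotopy) argument of Appendix \ref{appB} from $G^{\infty}_{q}$ to $G^{2}_{q}$ --- are precisely the details the paper leaves implicit.
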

The advantage of this reduction is that sections of $\Fr_{2}(h)/\O(q)\rightarrow X$ can be induced by simple geometric structures (connections and curvatures).
In other words, Chern-Weil theory.

\section{Regular foliations and Haefliger bundles}\label{haefybundle}
This section uses the diffeological machinery of the previous section to provide an alternative proof of Bott's result that the Chern-Weil map for a regular foliation recovers the characteristic classes,
which proof will readily generalise to the Haefliger-Singular case in the following sections.
First are two propositions.

Fix \( (M, \FF) \) a regular foliation of codimension \( q \), and recall that for $1\leq k\leq\infty$ we denote by $\Fr_k(M/\FF)$ the transverse $k$-frame bundle (Definition \ref{framebundle}) and by $\Fr_k(h_{\FF})$ the Haefliger bundle as defined in Equation \eqref{haefy1}.
The following Proposition identifies these bundles.
\begin{prop}\label{framehaefy}
	Let $(M,\FF)$ be a regular foliation of codimension $q$.  For all $1\leq k\leq\infty$, there are canonical isomorphisms $i_{k} \colon \Fr_{k}(M/\FF) \to \Fr_{k}(h_{\FF})$ of principal $G^{k}_{q}$-bundles.
  These commute with the natural projection maps.
\end{prop}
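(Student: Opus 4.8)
The plan is to write down the isomorphism explicitly, sending a leaf space equivalence class of transverse jets to the class it determines under a Haefliger chart. The definitions of leaf space equivalence (Definition~\ref{framebundle}) and of the gluing relation in $\Fr_k(h_\FF)$ (Equation~\eqref{haefy1}) are tailored precisely so that this assignment is canonical. Concretely, for $x\in U_\alpha$ and a transverse embedding $u$ through $x$, the composite $f_\alpha\composed u$ is a local diffeomorphism of $\RB^q$ with $(f_\alpha\composed u)(\vec0)=f_\alpha(x)$, so $j^{k}_{\vec0}(f_\alpha\composed u)$ is a point of $\Fr_k(\RB^q)$ over $f_\alpha(x)$, that is, a point of the $\alpha$-summand $(f_\alpha)^{*}\Fr_k(\RB^q)$ lying over $x$. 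I would define $i_k$ over $U_\alpha$ by
\[
i_k\big(j^{k}_{\vec0,\t}(u)\big):=\big[(x,\alpha,j^{k}_{\vec0}(f_\alpha\composed u))\big].
\]

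First I would check well-definedness at two levels. That the value depends only on the class $j^{k}_{\vec0,\t}(u)$ and not on the representative $u$ is exactly the defining property of leaf space equivalence. That the value is independent of the chart index $\alpha$ is the key compatibility: condition (2) of Definition~\ref{haefycocycle} gives $f_\beta=h^{x}_{\beta\alpha}\composed f_\alpha$ near $x$, whence $j^{k}_{\vec0}(f_\beta\composed u)=h_{\beta\alpha}(x)\cdot j^{k}_{\vec0}(f_\alpha\composed u)$, which is precisely the relation $\sim$ of Equation~\eqref{haefy1}. Hence the local formulae agree in the quotient and glue to a single global map $i_k\colon\Fr_k(M/\FF)\to\Fr_k(h_\FF)$ over $\id_M$. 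This gluing computation is the conceptual heart of the proposition, but reduces entirely to the cocycle identity.

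Next I would verify that $i_k$ is an isomorphism of principal $G^{k}_{q}$-bundles. Equivariance is immediate, as the principal action is by precomposition $u\mapsto u\composed g$ on both sides and $j^{k}_{\vec0}(f_\alpha\composed u\composed g)=j^{k}_{\vec0}(f_\alpha\composed u)\cdot g$. A $G^{k}_{q}$-equivariant bundle map over $\id_M$ between principal bundles is automatically a fibrewise bijection once the fibres are nonempty torsors, and transverse embeddings through any point exist for a regular foliation; alternatively one sees bijectivity directly, injectivity being the definition of leaf space equivalence and surjectivity by lifting a prescribed jet through the submersion $f_\alpha$ (in a foliation chart where $f_\alpha$ is the transverse projection, writing $x=(y_0,w_0)$ and given $g$ with $g(\vec0)=w_0$, the embedding $\vec s\mapsto(y_0,g(\vec s))$ satisfies $f_\alpha\composed u=g$). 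For smoothness I would observe that the local bijections $\Fr_k(M/\FF)|_{U_\alpha}\cong(f_\alpha)^{*}\Fr_k(\RB^q)$ are the smooth local trivialisations used to build the smooth structure on $\Fr_k(M/\FF)$ in \cite{gcc}, so that $i_k$ and its inverse are smooth by construction.

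Finally, compatibility with the projection maps $\Fr_k\to\Fr_{k'}$ for $k\geq k'$ is immediate from naturality of jet truncation, since truncating $j^{k}_{\vec0}(f_\alpha\composed u)$ to order $k'$ yields $j^{k'}_{\vec0}(f_\alpha\composed u)$; the case $k=\infty$ then follows by passing to the projective limit of the compatible finite-order isomorphisms. The main obstacle is the chart-independence identity above, with the only other point requiring genuine care being the confirmation that these local trivialisations really do match the smooth structure on the transverse frame bundle.
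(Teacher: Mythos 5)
Your proposal is correct and follows essentially the same route as the paper: the identical local formula $j^{k}_{\vec0,\t}(u)\mapsto\bigl[(u(0),\alpha,j^{k}_{\vec0}(f_\alpha\composed u))\bigr]_{\sim}$, with the cocycle identity $f_\alpha = h^{x}_{\alpha\beta}\composed f_\beta$ supplying chart-independence so the local maps glue. The paper's proof is terser, treating equivariance, bijectivity, smoothness, and compatibility with the projections as manifest, whereas you spell these out; the substance is the same.
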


\begin{proof}
	Assume $(M,\FF)$ to be associated to Haefliger charts $(U_{\alpha},f_{\alpha})$, with $f_{\alpha}$ submersive. The local maps $i_{k,\alpha} \colon \Fr_{k}(M/\FF)|_{U_{\alpha}}\rightarrow \Fr_{k}(h_{\FF})|_{U_{\alpha}}$ defined by
	\[
	i_{k,\alpha}\big(j^{k}_{\vec{0},\t}(u)\big)\mapsto
	\bigl[(u(0), \alpha, j^{k}_{\vec{0}}(f_{\alpha}\circ u))\bigr]_{\sim}
	\]
	glue to give the isomorphism $i_{k}$.
	Indeed, since each $f_{\alpha}$ is a submersion and each \( u \) a frame, the composition $f_{\alpha}\circ u$ is a local diffeomorphism of $\RB^{q}$.
	The maps \( i_{k,\alpha} \) are manifestly equivariant for the respective right actions of \( G^{k}_{q} \), and
	on each overlap $U_{\alpha}\cap U_{\beta}$, one has
	\[
		\bigl(x, \alpha, j^{k}_{\vec{0}}(f_{\alpha}\circ u)\bigr) =
	 	\bigl(x,\alpha,j^{k}_{\vec{0}}(h^{x}_{\alpha\beta}\circ f_{\beta}\circ u)\bigr)
		\sim \bigl(x,\beta,j^{k}_{\vec{0}}(f_{\beta}\circ u)\bigr)
	\]
	for each $j^{k}_{\vec{0},\t}(u) \in \Fr_{k}(M/\FF)|_{U_{\alpha}\cap U_{\beta}}$, so that \( i_{k,\alpha}(x,j^{k}_{\vec{0},\t}(u)) = i_{k,\beta}(x,j^{k}_{\vec{0},\t}(u)) \).
\end{proof}

The isomorphisms $i_{k} \colon \Fr_{k}(M/\FF)\rightarrow\Fr_{k}(h_{\FF})$ pull back the tautological forms $\omega^k_h$ from $\Fr_{k}(h_{\FF})$, in particular yielding the \( \RB^{q} \)- and \( \glf(\RB^{q}) \)-valued forms
\begin{align*}
  \omega^{i} := i_{1}^{*}(\omega^{1}_{h})^i \\
  \omega^{i}_{j} := i_{2}^{*}(\omega^{2}_{h})^i_j
\end{align*}
on $\Fr_2(M/\FF)$. These furthermore satisfy the structure equations
\[ \d\omega^{i} = -\omega^{i}_{j} \w \omega^{j} . \]

The next Proposition is folklore, although elements of its proof appear in a less precise form in \cite[Section 5.2]{macr1}.
We give the proof here for completeness.

\begin{prop}\label{correspondence}
  Let $(M,\FF)$ be a regular foliation of codimension $q$.
  Torsion-free Bott connections on $\nuFF$ are in bijective correspondence with $\GL(\RB^{q})$-equivariant sections of $\Fr_{2}(M/\FF)\rightarrow\Fr_{1}(M/\FF)$.
\end{prop}
\begin{proof}
  \newcommand{\downomega}{\check{\omega}}

  Consider an equivariant section \( \sigma_{2} \colon \Fr_{1}(M/\FF) \to \Fr_{2}(M/\FF) \).
  Choose an arbitrary local section \( \sigma_{1} \) of \( \Fr_{1}(M/\FF) \to M \), and use the sections \( \sigma_{1} \) and \( \sigma_{2} \circ \sigma_{1} \) to pull the tautological forms \( \omega^{i}, \omega^{i}_{j} \) to forms \( \downomega^{i}, \downomega^{i}_{j} \) on \( M \).
  (Here we may work locally, so without loss suppose that the domain of \( \sigma_{1} \) is all of \( M \).)
  The \( \downomega^{i} \) annihilate \( \tFF \) by definition, and so descend to a well-defined coframing of the normal bundle \( \nuFF = TM / \tFF \).
  Let \( e_{i} \) denote the framing of \( \nuFF \) dual to the \( \omega^{i} \).
  We may define a connection relative this framing by
  \[ \nabla_{X} e_{i} = \downomega^{j}_{i}(X_{\nu}) e_{j} + [X_{\FF}, e_{i}]_{\nu} \qquad \mbox{ for } X \in \XF(M) . \]
  (Recall the notation for a splitting of \( X = X_{\nu} + X_{\FF} \) into components normal and tangential to \( \tFF \).)
  By construction this is a Bott connection.
  It is independent of the choice of coframing \( \sigma_{1} \), because any two coframings differ by the action of a \( \GL(\RB^{q}) \)-valued function and the assumption that \( \downomega^{i}_{j} \) depends equivariantly on this choice.
  The connection is torsion-free by the following calculation:
  \[ \downomega^{i}([e_{j}, e_{k}]) = - \d \downomega^{i}(e_{j}, e_{k}) = \downomega^{i}_{l} \w \downomega^{l}(e_{j}, e_{k}) = \downomega^{i}(\nabla_{e_{j}} e_{k} - \nabla_{e_{k}} e_{j}) . \]

Conversely, let $\nabla^{\nu}$ be a torsion-free Bott connection on $\nuFF$.
The aim is to construct from \( \nabla^{\nu} \) an equivariant section $\sigma \colon \Fr_{1}(M/\FF) \rightarrow \Fr_{2}(M/\FF)$.
To this end, fix an arbitrary connection \( \nabla^{\FF} \) on \( \tFF \), and consider the affine connection 
\[ \nabla_{X} Y = \nabla^{\nu}_{X} Y_{\nu} + \nabla^{\FF}_{X} Y_{\FF} . \]
Define a section $\sigma \colon \Fr_{1}(M/\FF)\rightarrow\Fr_{2}(M/\FF)$ by taking the \( 2 \)-jet of the exponential map;
for a frame \( \varepsilon = (e_{i}) \in \Fr_{1}(M/\FF) \) of \( \nuFF \) over a point \( p \in M \), there is a canonical isomorphism \( \RB^{q} \cong \nuFF_{p} \), and the exponential map defines a smooth map
\[ \begin{tikzcd}[row sep={8mm,between origins}]
u_{\varepsilon} \colon \RB^{q} \cong \nuFF_{p} \ar[r] & M \\
v = v^{i}e_{i} \ar[r, mapsto] & \exp_{\nabla}(v) .
\end{tikzcd} \]
Define \( \sigma \) by \( \sigma(\varepsilon) = j^{2}_{\vec{0},\t}(u_{\varepsilon}) \), a choice which is manifestly equivariant.

While the function \( u \) depends on the choice of \( \nabla^{\FF} \), its transverse \( 2 \)-jet does not.
To see this, extend the frame \( e_{i} \) of \( \nuFF_{p} \) to a frame \( e_{i}, f_{a} \) of \( T_{p} M \).
Relative this frame, the Christoffel symbols are given by
\begin{align*}
\nabla_{e_{i}} e_{j} = \Gamma_{ij}^{k} e_{k} + \Gamma_{ij}^{a} f_{a}, \qquad
\begin{array}{c}
\nabla_{e_{j}} f_{a} = \Gamma_{ja}^{k} e_{k} + \Gamma_{ja}^{b} f_{b} \\
\nabla_{f_{a}} e_{j} = \Gamma_{aj}^{k} e_{k} + \Gamma_{aj}^{b} f_{b} ,
\end{array}  \qquad
\nabla_{f_{a}} f_{b} = \Gamma_{ab}^{k} e_{k} + \Gamma_{ab}^{c} f_{c} .
\end{align*}
The exponential map is determined by geodesics;
a path $\gamma(t)$ through \( p \in M \), with tangential components \( \dot\gamma = (\gamma^{i}, \gamma^{a}) \),
is geodesic if and only if it satisfies the equation
\[ \ddot\gamma^{A}(t) = -\Gamma^{A}_{BC}(\gamma(t))\dot\gamma^{B}(t)\dot\gamma^{C}(t) \qquad A, B, C = 1,\ldots n \]
for \( t \) in the domain of \( \gamma \).
But for any geodesic through \( p \) and tangent to \( \nuFF \) at \( p \), this reduces to $\ddot\gamma^{A}(0) = -\Gamma^{A}_{jk}(x)\dot\gamma^{j}(0)\dot\gamma^{k}(0)$, and the normal component of this reduces further to
\[
\ddot\gamma^{i}(0) = -\Gamma^{i}_{jk}(x)\dot\gamma^{j}(0)\dot\gamma^{k}(0).
\]
Here only `normal' indices \( i,j,k \) appear, and these particular components of the Christoffel symbol depend only on the Bott connection, so, the normal component of the two-jet of the exponential map is independent of the choice of \( \nabla^{\FF} \).
\end{proof}

With Proposition \ref{correspondence} in hand, we can reformulate the Chern-Weil theorem for regular foliations, cf. Theorem \ref{guegorlet}.
Recall that if $\epsilon$ is a Euclidean structure on a vector bundle, then any connection $\nabla$ on the bundle induces an $\epsilon$-compatible connection $\nabla_{\epsilon}$.
The connection \( \nabla_{\epsilon} \) can be defined relative an orthonormal framing by keeping the antisymmetric component of the connection form of \( \nabla \), which choice is compatible for two orthonormal coframings because they differ by the action of a function \( M \to \O(n) \).
\begin{thm}[Chern-Weil theorem for regular foliations]\label{cwreg}
  Let $(M,\FF)$ be a regular foliation of codimension $q$, and \( \eta \colon M \to B \Gamma_{q} \) a classifying map for \( \FF \).
  Suppose \( \nuFF \) is equipped with a Euclidean structure $\epsilon$ and a torsion-free Bott connection $\nabla$.
  Denote by $\nabla_{\epsilon}$ the $\epsilon$-compatible connection on \( \nuFF \) induced by $\nabla$, and for $t\in[0,1]$ denote by $\nabla^{t}:=t\nabla+(1-t)\nabla_{\epsilon}$ the affine combination on \( M \times I \).
  The map of algebras $\lambda_{\epsilon,\nabla} \colon WO_{q}\rightarrow\Omega^{*}(M)$ defined on generators of \( WO_{q} \) by
  \[
    \lambda_{\epsilon,\nabla}(c_{i}):=\Tr(R^{\w i}_{\nabla})\in\Omega^{2i}(M)
  \]
  \[
    \lambda_{\epsilon,\nabla}(h_{i}) := i \int_{0}^{1}\Tr\left((\nabla-\nabla_{\epsilon})\w R^{\w(i-1)}_{\nabla^{t}}\right)\,dt\in\Omega^{2i-1}(M)
  \]
  is a DGA map.
  Descending to cohomology, the diagram
  \begin{equation}\label{eq:WOq CHW cohomology}
    \begin{tikzcd}
      H^{*}(WO_{q}) \ar[r,"u"] \ar[dr,"\lambda_{\epsilon,\nabla}"'] & H^{*}(\Omega(B\Gamma_{q})) \ar[d,"\eta^{*}"] \\
      & H^{*}(\Omega(M))
    \end{tikzcd}
  \end{equation}
  commutes.
\end{thm}
\begin{proof}
  Let $\sigma_{\epsilon} \colon M\rightarrow\Fr_{1}(M/\FF)/\O(q)$ be the section defined by the Euclidean structure, locally determined by any choice of orthonormal framing.
  Per Proposition \ref{correspondence}, the Bott connection \( \nabla \) determines a section $\sigma_{\nabla} \colon \Fr_{1}(M/\FF)\rightarrow\Fr_{2}(M/\FF)$, which is equivariant, and thus descends to a section \( \sigma_{\nabla} \colon \Fr_{1}(M/\FF) / \O(q) \rightarrow\Fr_{2}(M/\FF) / \O(q) \).
  It is a direct calculation that for each generator \( a = c_{i} \) or \( h_{i} \) of \( WO_{q} \), the image $\lambda_{\epsilon,\nabla}(a)$ is the pullback by $\sigma_{\nabla} \sigma_{\epsilon}$ of $a(i_{2}^{*}\omega^{2})$, as in the left triangle of the following diagram.
  \begin{equation}\label{eq:WOq CHW chains}
  \begin{tikzcd}
      WO_{q} \ar[rr, bend left=8mm, "\omega^{2}"] \ar[r, "i_{2}^{*}\omega^{2}" pos=.7] \ar[dr, "\lambda_{\epsilon,\nabla}" swap] & \Omega^{*}(\Fr_{2}(M/\FF)/\O(q)) \ar[d, "(\sigma_{\nabla}\sigma_{\epsilon})^{*}"] & \Omega^{*}(Fr_{2}(\gamma)/\O(q)) \ar[l] \ar[d, "\sigma^{*}"] \\
      & \Omega^{*}(M) & \Omega^{*}(B\Gamma_{q}) \ar[l, "\eta^{*}"]
    \end{tikzcd}
  \end{equation}
  Up to the isomorphism \( i_{2}^{*} \), the assignment \( a \mapsto a(i_{2}^{*}\omega^{2}) \) is the same as the map \( \omega^{2} \) in Corollary \ref{cor: WOq factors 2jets}, and so by that Corollary, the map on generators extends to a DGA map \( \lambda_{\epsilon, \nabla} \).
  The commutativity of diagram \eqref{eq:WOq CHW cohomology} then follows from the commutativity up to homotopy of diagram \eqref{eq:WOq CHW chains}, recalling that the universal characteristic map \( u \) is determined by the map of forms \( \sigma^{*} \circ \omega^{2} \).
\end{proof}

\section{Chern-Weil for Haefliger-singular foliations}\label{singularCHW}

\subsection{Adapted geometries and the Chern-Weil map}

Fix a codimension-\( q \) Haefliger-singular foliation \( (M, \FF) \), with smooth classifying map $\eta_{\FF} \colon M\rightarrow B\Gamma_{q}$ and accompanying Haefliger cocycle $h_{\FF} := \eta_{\FF}^{*}\gamma$.
Recall that the regular set \( \tilde{M} \) is dense.
Over the regular set, a metric \( \epsilon \) on \( \nuFF \) induces a section $\sigma_{\epsilon} \colon \tilde{M}\rightarrow\Fr_{1}(\tilde{M}/\FF)/\O(q)$ and a Bott connection \( \nabla \) induces a section $\sigma_{\nabla} \colon \Fr_{1}(\tilde{M}/\FF)/\O(q)\rightarrow\Fr_{2}(\tilde{M}/\FF)/\O(q)$.
The isomorphism \( i_{2} \colon \Fr_{2}(\tilde{M}/\FF)/\O(q) \hookrightarrow \Fr_{2}(h_{\FF})/\O(q) \) of Proposition \ref{framehaefy} is also defined over \( \tilde{M} \).
\begin{defn}
  An \textbf{adapted geometry} for $(M,\FF)$ is a pair $(\epsilon,\nabla)$ of a Euclidean metric $\epsilon$ and a torsion-free Bott connection $\nabla$ on \( \nuFF \) over \( \tilde{M} \) for which the composition $i_{2}\sigma_{\nabla}\sigma_{\epsilon} \colon \tilde{M}\rightarrow\Fr_{2}(h_{\FF})/\O(q)$
  extends smoothly to a section \( \sigma \) of \( \Fr_{2}(h_{\FF})/\O(q) \to M \).
\end{defn}
One may worry that such special geometries don't exist;
in fact, every Haefliger-singular foliation admits adapted geometries, which is a consequence of Corollary \ref{existence}.
Before turning to existence, we describe how an adapted geometry allows for the construction of a Chern-Weil map on Haefliger-singular foliations.
\begin{thm}[Chern-Weil for Haefliger-singular foliations]\label{cwsing}
  Let $(M,\FF)$ be a codimension-\( q \) Haefliger-singular foliation with classifying map \( \eta_{\FF} \colon M \to B\Gamma_{q} \), and let $(\epsilon, \nabla)$ be an adapted geometry for \( \FF \).
  Denote by $\nabla_{\epsilon}$ the $\epsilon$-compatible connection on $\nuFF$ induced by $\nabla$, and by $\nabla^{t}:=t\nabla+ (1-t)\nabla_{\epsilon}$ the affine combination on \( M \times I \).
  The forms
  \[
    \lambda_{\epsilon,\nabla}(c_{i}):=\Tr(R_{\nabla}^{\w i})\in\Omega^{2i}(\tilde{M})
  \]
  \[	\lambda_{\epsilon,\nabla}(h_{i}):=i\int_{0}^{1}\Tr\big((\nabla-\nabla_{\epsilon})\w R_{\nabla_{t}}^{\w(i-1)}\big)\,dt\in\Omega^{2i-1}(\tilde{M})
  \]
  extend to globally-defined smooth forms on $M$.
  Furthermore, the resulting homomorphism $\lambda_{\epsilon,\nabla} \colon WO_{q}\rightarrow\Omega^{*}(M)$ of DGAs makes the following diagram commute.
  \begin{center}
    \begin{tikzcd}
      H^{*}(WO_{q}) \ar[r,"u"] \ar[dr,"\lambda_{\epsilon,\nabla}"'] & H^{*}(\Omega(B\Gamma_{q})) \ar[d,"\eta_{\FF}^{*}"] \\ & H^{*}(\Omega(M))
    \end{tikzcd}
  \end{center}
\end{thm}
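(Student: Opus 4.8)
The plan is to reduce the statement to the regular Chern--Weil theorem (Theorem~\ref{cwreg}) applied to the regular subfoliation $(\tilde M,\FF)$, exploiting the fact that the Haefliger bundle $\Fr_2(h_\FF)$---unlike the transverse frame bundle $\Fr_2(\tilde M/\FF)$---is defined over all of $M$, singularities included. For each generator $a\in\{c_i,h_i\}$, the $\O(q)$-basic Gel'fand--Fuks cocycle evaluates on the tautological form to give a smooth form $a(\omega^2_{h_\FF})$ on the quotient $\Fr_2(h_\FF)/\O(q)$ over the \emph{whole} of $M$. Applying the proof of Theorem~\ref{cwreg} to $(\tilde M,\FF)$, together with the identification $i_2\colon\Fr_2(\tilde M/\FF)/\O(q)\xrightarrow{\cong}\Fr_2(h_\FF)/\O(q)|_{\tilde M}$ of Proposition~\ref{framehaefy}, shows that the explicit forms $\lambda_{\epsilon,\nabla}(a)$ defined on $\tilde M$ are exactly the pullbacks $(i_2\sigma_\nabla\sigma_\epsilon)^*a(\omega^2_{h_\FF})$.

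First I would invoke the adapted-geometry hypothesis: by definition the section $i_2\sigma_\nabla\sigma_\epsilon\colon\tilde M\to\Fr_2(h_\FF)/\O(q)$ extends to a smooth section $s\colon M\to\Fr_2(h_\FF)/\O(q)$. Since $a(\omega^2_{h_\FF})$ is a globally defined smooth form on the total space, the pullback $s^*a(\omega^2_{h_\FF})$ is a smooth form on all of $M$, and on the dense open set $\tilde M$ it restricts, via $s|_{\tilde M}=i_2\sigma_\nabla\sigma_\epsilon$, to $\lambda_{\epsilon,\nabla}(a)$. Thus $s^*a(\omega^2_{h_\FF})$ is the required smooth extension; density of $\tilde M$ makes it unique. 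This proves the extension claim and identifies $\lambda_{\epsilon,\nabla}$ with the composite of the inclusion $WO_q\hookrightarrow A^*(\af_q)_{\O(q)}$, the evaluation $a\mapsto a(\omega^2_{h_\FF})$, and $s^*$.

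The remaining assertions are then formal. That $\lambda_{\epsilon,\nabla}$ is a homomorphism of DGAs follows because evaluation against $\omega^\infty_{h_\FF}$ is a DGA map (Maurer--Cartan, as in Theorem~\ref{cwreg}) and, since the generators of $WO_q$ involve only $2$-jets, this evaluation factors through $a\mapsto a(\omega^2_{h_\FF})$ along the injective pullback $\Fr_\infty(h_\FF)\to\Fr_2(h_\FF)$; composing with the DGA map $s^*$ finishes the point (alternatively, the defining DGA relations hold on the dense set $\tilde M$ and extend to $M$ by smoothness of the forms). For commutativity of the diagram, the Corollary following Theorem~\ref{sections} computes $\eta_\FF^*\circ u$ on $H^*(WO_q)$ as the cohomology map induced by $\tilde s^*\circ(\text{evaluation})$ for \emph{any} smooth section $\tilde s$ of $\Fr_2(h_\FF)/\O(q)\to M$; taking $\tilde s=s$ gives $[\lambda_{\epsilon,\nabla}(a)]=\eta_\FF^*u[a]$ at once.

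The hard part is conceptual rather than computational. The decisive point---which the whole Haefliger-bundle apparatus is designed to deliver---is that singularities obstruct the transverse frame bundle $\Fr_2(\tilde M/\FF)$ but not the Haefliger bundle $\Fr_2(h_\FF)$; consequently $a(\omega^2_{h_\FF})$ already exists globally, and the only object that genuinely needs to be extended across the singular set $\Sigma$ is the \emph{section} $i_2\sigma_\nabla\sigma_\epsilon$. That extension is precisely the content of the adapted-geometry condition, so once it is isolated, density of $\tilde M$ and the universal characteristic map of Definition~\ref{universalmap} supply everything else.
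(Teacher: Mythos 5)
Your proposal is correct and follows essentially the same route as the paper: both identify $\lambda_{\epsilon,\nabla}(a)$ on $\tilde{M}$ with $(i_{2}\sigma_{\nabla}\sigma_{\epsilon})^{*}a(\omega^{2}_{h_{\FF}})$, use the adapted-geometry hypothesis to extend the section (and hence the forms) across $\Sigma$, and obtain the DGA property from the Maurer--Cartan identity at the $\Fr_{\infty}$ level. Your appeal to the Corollary following Theorem~\ref{sections} for commutativity is just a slightly more streamlined packaging of what the paper calls ``essentially the same argument as in Theorem~\ref{cwreg}''---both rest on the same homotopy-uniqueness of sections.
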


\begin{proof}
  The proof is nearly identical to the regular case, Theorem \ref{cwreg}.
  As in the regular case, the image by \( \lambda_{\epsilon, \nabla} \) of any generator \( a = c_{i} \) or \( h_{i} \) is the pullback of \( a(i_{2}^{*} \omega^{2}) \) by \( \sigma_{\nabla} \sigma_{\epsilon} \) (commutativity of the bottom left triangle in the following diagram).
  \begin{equation*}
  \begin{tikzcd}
    WO_{q} \ar[ddr, "\lambda_{\epsilon,\nabla}" swap, bend right] \ar[dr, "i_{2}^{*}\omega^{2}" swap] \ar[drr, "\omega_{2}" swap, bend left=10] \ar[drrr, "\omega_{2}" swap, bend left=10] & & & \\
    & \Omega^{*}\left(\frac{\Fr_{2}(M/\FF)}{\O(q)}\right) \ar[d, "(\sigma_{\nabla}\sigma_{\epsilon})^{*}" swap] & \Omega^{*}\left(\frac{\Fr_{2}(h_{\FF}|_{\tilde{M}})}{\O(q)}\right) \ar[l, "i_{2}^{*}" swap] \ar[dl] & \Omega^{*}\left(\frac{\Fr_{2}(h_{\FF})}{\O(q)}\right) \ar[l] \ar[d, "\sigma^{*}"] \\
     & \Omega^{*}(\tilde{M}) & & \Omega^{*}(M) \ar[ll]
    \end{tikzcd}
  \end{equation*}
  But the form \( a(\omega^{2}) \) is defined on all of \( \Fr_{2}(h_{\FF})/\O(q) \) over \( M \), and its pullback by \( \sigma \) is the desired extension of \( \lambda_{\epsilon, \nabla} \), where \( \sigma \colon M \to \Fr_{2}(\Fr_{2}(h_{\FF}/O(q))) \) the smooth section extending \( i_{2}\sigma_{\nabla} \sigma_{\epsilon} \).
  
  Commutativity of the homology diagram follows by the same argument as in Theorem \ref{cwreg}.
\end{proof}

\subsection{Functoriality}
Given a regular foliation \( (N, \FF) \) and a map \( f \colon M \to N \) transverse to \( \FF \), the pullback foliation \( f^{*}\FF \) is again a regular foliation.
Furthermore, the regular Chern-Weil map pulls back the characteristic classes in a functorial manner.
The situation is similar for Haefliger-singular foliations and Haefliger-singular maps, which we turn to now.
\begin{defn}\label{gentrans}
  Let $(N,\FF')$ be a Haefliger-singular foliation.
  A smooth map $f \colon M\rightarrow N$ is \textbf{regular at $x\in M$} if for some (and hence any) Haefliger chart \( h_{\alpha} \colon U_{\alpha} \to \RB^q \) with \( f(x) \in U_{\alpha} \), it holds that \( h_{\alpha} \circ f \) is a submersion at \( x \).
  The map $f$ is \textbf{Haefliger-singular} if it is regular on a dense open subset of \( M \).
\end{defn}

It follows immediately from Definition \ref{gentrans} that the pullback of a Haefliger-singular foliation by a Haefliger-singular map is again Haefliger-singular.
Similarly, one has functoriality of adapted geometries.
\begin{thm}\label{functoriality}
  Let $(N,\FF')$ be a codimension-\( q \) Haefliger-singular foliation, $f \colon M\rightarrow (N,\FF')$ a Haefliger-singular map, and $\FF = f^{*}\FF'$ the induced Haefliger-singular foliation on $M$.
  If $(\epsilon', \nabla')$ is an adapted geometry for $(N,\FF')$, then $(\epsilon, \nabla) = (f^{*}\epsilon', f^{*}\nabla')$ is an adapted geometry for $(M, \FF)$.
  Furthermore, the following diagram commutes.
  \begin{equation}\label{functdiagram1}
    \begin{tikzcd}[row sep={8mm,between origins}]
      & \Omega^{*}(N) \ar[dd,"f^{*}"] \\
      WO_{q} \ar[ur,"\lambda_{\epsilon',\nabla'}"] \ar[dr, "\lambda_{\epsilon,\nabla}"']  & \\
      & \Omega^{*}(M) 
    \end{tikzcd} 
  \end{equation}
\end{thm}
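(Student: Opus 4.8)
The plan is to reduce both assertions---that $(f^{*}\epsilon, f^{*}\nabla)$ is adapted, and that triangle \eqref{functdiagram1} commutes---to a single naturality statement for the sections into the quotient Haefliger bundles, and then to deduce both claims formally from the naturality of the tautological form recorded in Proposition \ref{functorial}. The only genuinely geometric input will be that the transverse $2$-jet of the exponential is natural under transverse pullback.

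First I would record the setup on the regular set. Let $\tilde M$ denote the regular set of the pulled-back cocycle; by Definition \ref{gentrans} this is exactly the locus where $f$ is regular, and it is dense because $f$ is Haefliger-singular. At such a point $x$ the composite $h_{\alpha}\circ f$ is a submersion, which forces $(dh_{\alpha})_{f(x)}$ to be onto and hence $f(x)\in\tilde N$. Thus $f$ restricts to a map $\tilde M\to\tilde N$ transverse to the regular subfoliation, and the standard theory of transverse maps \cite{cc1, cc2} gives that $f^{*}\FF$ is regular on $\tilde M$ with $\nu(f^{*}\FF)=f^{*}\nuFF$, and that $(f^{*}\epsilon, f^{*}\nabla)$ is a Euclidean metric together with a torsion-free Bott connection on $\nu(f^{*}\FF)\to\tilde M$. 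In particular $\lambda_{f^{*}\epsilon, f^{*}\nabla}$ is defined on $\tilde M$ exactly as in Theorem \ref{cwsing}.

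The heart of the argument is the following claim: on $\tilde M$, the section $i_{2}\sigma_{f^{*}\nabla}\sigma_{f^{*}\epsilon}\colon\tilde M\to\Fr_{2}(h_{f^{*}\FF})/\O(q)$ is the pullback along $f$ of $s_{N}:=i_{2}\sigma_{\nabla}\sigma_{\epsilon}$. To set this up I would apply Proposition \ref{functorial} to $\eta_{\FF}\circ f\colon M\to B\Gamma_{q}$, obtaining a bundle map $\tilde f\colon\Fr_{2}(h_{f^{*}\FF})/\O(q)\to\Fr_{2}(h_{\FF})/\O(q)$ covering $f$, realising $\Fr_{2}(h_{f^{*}\FF})/\O(q)\cong f^{*}\bigl(\Fr_{2}(h_{\FF})/\O(q)\bigr)$ as a fibrewise isomorphism, and satisfying $\tilde f^{*}\omega^{2}_{h_{\FF}}=\omega^{2}_{h_{f^{*}\FF}}$. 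The claim then reads $\tilde f\circ\bigl(i_{2}\sigma_{f^{*}\nabla}\sigma_{f^{*}\epsilon}\bigr)=s_{N}\circ f$ on $\tilde M$, and it splits into naturality of the two constituent sections. For $\sigma_{\epsilon}$ the normal derivative $df\colon\nu(f^{*}\FF)\to\nuFF$ is a fibrewise isometry for $f^{*}\epsilon$ and $\epsilon$, so it carries orthonormal frames to orthonormal frames and intertwines $\sigma_{f^{*}\epsilon}$ with $\sigma_{\epsilon}$. For $\sigma_{\nabla}$ I would use the exponential description of Theorem \ref{correspondence}: writing the sections as transverse $2$-jets of exponential maps and composing with the first integrals $f_{\alpha}\circ f$ (on $M$) and $f_{\alpha}$ (on $N$), one must show that $f\circ\exp^{\nabla_{M}}$ and $\exp^{\nabla_{N}}$ agree to transverse second order at matched normal frames $df(\vec e_{M})=\vec e_{N}$. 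First-order agreement is immediate from $df(\vec e_{M})=\vec e_{N}$ and the fact that $df_{\alpha}$ annihilates $\tBun\FF$; for the second order one writes the geodesic equation in foliation coordinates as in the proof of Theorem \ref{correspondence}, where the transverse $2$-jet is seen to depend only on the normal Christoffel symbols $\Gamma^{i}_{jk}$, and these, for $f^{*}\nabla$ expressed in the normal coordinate $f_{\alpha}\circ f$, are precisely the pullbacks of those of $\nabla$ expressed in $f_{\alpha}$. This normal-Christoffel identity is the main obstacle and the only non-formal step; everything else is bookkeeping of the identifications $i_{k}$ and $\tilde f$.

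Granting the section identity on the dense set $\tilde M$, I would conclude as follows. Let $\bar s_{N}\colon N\to\Fr_{2}(h_{\FF})/\O(q)$ be the global smooth extension of $s_{N}$ supplied by adaptedness of $(\epsilon,\nabla)$, and define $\bar s_{M}\colon M\to\Fr_{2}(h_{f^{*}\FF})/\O(q)$ to be the section corresponding under the above isomorphism to the smooth map $\bar s_{N}\circ f$, so that $\bar s_{M}$ is smooth and $\tilde f\circ\bar s_{M}=\bar s_{N}\circ f$ on all of $M$. On $\tilde M$ we have $\bar s_{N}\circ f=s_{N}\circ f=\tilde f\circ\bigl(i_{2}\sigma_{f^{*}\nabla}\sigma_{f^{*}\epsilon}\bigr)$ by the claim, and cancelling the fibrewise isomorphism $\tilde f$ yields $\bar s_{M}=i_{2}\sigma_{f^{*}\nabla}\sigma_{f^{*}\epsilon}$ there; thus $\bar s_{M}$ is the required smooth extension and $(f^{*}\epsilon, f^{*}\nabla)$ is adapted. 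Commutativity of \eqref{functdiagram1} is then purely formal: for $a\in WO_{q}$,
\[
\lambda_{f^{*}\epsilon, f^{*}\nabla}(a)=\bar s_{M}^{*}\,a(\omega^{2}_{h_{f^{*}\FF}})=\bar s_{M}^{*}\,a(\tilde f^{*}\omega^{2}_{h_{\FF}})=(\tilde f\circ\bar s_{M})^{*}a(\omega^{2}_{h_{\FF}})=(\bar s_{N}\circ f)^{*}a(\omega^{2}_{h_{\FF}})=f^{*}\lambda_{\epsilon,\nabla}(a),
\]
using $\tilde f\circ\bar s_{M}=\bar s_{N}\circ f$ together with the description of $\lambda$ from Theorem \ref{cwsing}. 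This completes the plan.
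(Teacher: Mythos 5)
Your proposal is correct and follows essentially the same route as the paper's proof: identify $\Fr_{2}(h_{f^{*}\FF})/\O(q)$ with $f^{*}\bigl(\Fr_{2}(h_{\FF})/\O(q)\bigr)$ via functoriality of Haefliger bundles, pull back the adapted extension along $f$, match it with $i_{2}\sigma_{f^{*}\nabla}\sigma_{f^{*}\epsilon}$ on the dense regular set, conclude adaptedness by uniqueness of smooth extensions, and get commutativity of the triangle formally because the Chern-Weil map factors through forms on the Haefliger bundle. The only difference is one of detail: the paper simply asserts that $f^{*}\epsilon$ and $f^{*}\nabla$ are associated to the pulled-back sections (the ``chase around the top of the diagram''), whereas you supply an explicit proof of this naturality---the fibrewise-isometry argument for $\sigma_{\epsilon}$ and the normal-Christoffel-symbol argument, via the geodesic equation from Theorem \ref{correspondence}, for $\sigma_{\nabla}$.
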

\begin{proof}
  Consider the diagram, with \( \sigma \) the unique smooth section extending \( i_{2}\sigma_{\nabla'}\sigma_{\epsilon'} \).
  \begin{equation}\label{functdiagram2}
    \begin{tikzcd}
      \tilde{N} \ar[rrr, hook] \ar[dr, "\sigma_{\nabla'}\sigma_{\epsilon'}"] & & & N \ar[dl, "\sigma"] \\
      & \Fr_{2}(\tilde{N}/\FF')/\O(q) \ar[r,hook,"i_{2}"] & \Fr_{2}(h_{\FF'})/\O(q) & \\
      & \Fr_{2}(\tilde{M}/\FF)/\O(q) \ar[r,hook,"i_{2}"] \ar[u] & \Fr_{2}(h_{\FF})/\O(q) \ar[u] & \\
      \tilde{M} \ar[uuu, "f"] \ar[rrr, hook] \ar[ur,"\sigma_{\nabla}\sigma_{\epsilon}"'] & & & M \ar[uuu, "f"] \ar[ul, "f^{*}\sigma" swap, dashed]
    \end{tikzcd}
  \end{equation}
  The left hand square commutes because the pullback data $\epsilon$ and $\nabla$ are associated to the pullbacks by $f$ of the corresponding sections $\sigma_{\epsilon'}$ and $\sigma_{\nabla'}$.
  The right hand square commutes by construction if we take the smooth pullback section \( f^*\sigma \), after noting that functoriality of the Haefliger bundle affords the identification \( \Fr_{2}(h_{\FF})\cong f^{*}\Fr_{2}(h_{\FF'}) \).
  The top two middle squares commute by definition.

  At any regular point \( x \in \tilde{M} \), one sees that \( i_{2}\sigma_{\nabla}\sigma_{\epsilon}(x) = f^{*}\sigma(x) \) by chasing around the top of the diagram, noting that the fibre of \( \Fr_{2}(h_{\FF})/O(q) \) over \( x \) is canonically isomorphic to the fibre of \( \Fr_{2}(h_{\FF'})/O(q) \) over \( f(x) \).
  As \( \tilde{M} \) is dense, the section \( f^{*}\sigma \) is the unique extension of \( i_{2} \sigma_{\nabla}\sigma_{\epsilon} \), and so, having constructed a smooth extension, the geometry $(\epsilon,\nabla)$ is seen to be adapted.

  The commutativity of Diagram \eqref{functdiagram1} follows also from commutativity of Diagram \eqref{functdiagram2}, because the Chern-Weil morphism factors through the forms on the Haefliger bundle.
\end{proof}

Theorem \ref{functoriality} can be used to quickly demonstrate the following.

\begin{cor}\label{existence}
  Every Haefliger-singular foliation admits adapted geometries.
\end{cor}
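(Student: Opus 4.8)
The plan is to realise the given Haefliger-singular foliation as the pullback of a regular foliation along a Haefliger-singular map, and then invoke the functoriality of adapted geometries established in Theorem \ref{functoriality}. This reduces the existence of adapted geometries to the trivial case of regular foliations, where every geometry is already adapted.

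First I would apply Proposition \ref{graph} to the Haefliger structure $h$ underlying $(M,\FF)$. Since a Haefliger-singular foliation is in particular a Haefliger structure on a manifold, this produces the graph embedding $i \colon M \to G(h)$ into the regularly foliated open manifold $G(h)$, realising $\FF$ as the pullback $i^{*}\FF_{G(h)}$ of the regular foliation on $G(h)$. The next step is to verify that $i$ is a Haefliger-singular map in the sense of Definition \ref{gentrans}. By the final clause of Proposition \ref{graph}, the embedding $i$ is regular at every point where $h$ is regular, that is, on the regular set $\tilde{M} = M - \Sigma$. Because $(M,\FF)$ is Haefliger-singular, $\tilde{M}$ is a dense open subset of $M$; hence $i$ is regular on a dense open set and is therefore Haefliger-singular.

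Finally I would choose any Euclidean metric $\epsilon$ and torsion-free Bott connection $\nabla$ on the normal bundle of the regular foliation $G(h)$. Since $G(h)$ carries a genuinely regular foliation, its singular set is empty, so the associated section $i_{2}\sigma_{\nabla}\sigma_{\epsilon}$ is already defined over all of $G(h)$; thus $(\epsilon,\nabla)$ is trivially an adapted geometry. Viewing the regular foliation on $G(h)$ as a (degenerate) Haefliger-singular foliation and applying Theorem \ref{functoriality} to the Haefliger-singular map $i$ then shows that $(i^{*}\epsilon, i^{*}\nabla)$ is an adapted geometry for $(M, i^{*}\FF_{G(h)}) = (M,\FF)$, as required.

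The only point requiring genuine care is the verification that $i$ is Haefliger-singular, and even this is immediate: it follows from the density of the regular set built into the definition of a Haefliger-singular foliation, combined with the regularity clause of Proposition \ref{graph}. Everything else is a formal consequence of the two cited results, so I would expect the argument to be short, with no substantive computation.
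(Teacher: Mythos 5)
Your proposal is correct and follows exactly the paper's own argument: the corollary is stated there as an immediate consequence of Proposition \ref{graph} and Theorem \ref{functoriality}, using the graph embedding into the regular foliation of $G(h)$ (which is Haefliger-singular since it is regular on the dense regular set) to pull back a trivially adapted geometry. You have merely spelled out the details that the paper leaves implicit.
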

\begin{proof}
  Every geometry $(\epsilon,\nabla)$ on a regular foliation $(N,\FF')$ is adapted, so every Haefliger-singular map $f \colon (M, \FF) \rightarrow (N,\FF')$ into a regular foliation induces an adapted geometry for \( \FF \).
  So, it suffices to show that every Haefliger-singular foliation admits such a map into a regular foliation.
  This follows from Haefliger's graph construction, \cite[p. 188]{ha3}, which we recall for completeness.

  Fix a representative Haefliger cocycle \( h \) for \( (M, \FF) \) over some locally finite open cover $\{U_{\alpha}\}_{\alpha\in\NB}$ of \( M \), with Haefliger charts $f_{\alpha} \colon U_{\alpha}\rightarrow\RB^{q}$.
  Form an open manifold $G(h)$ by gluing appropriate neighbourhoods \( V_{\alpha} \) of \( \Gamma(f_{\alpha}) = \{ (x, f_{\alpha}(x)) \colon x \in U_{\alpha} \} \) in \( U_{\alpha} \times \RB^{q} \)
  along the change of chart maps associated to the cocycle.
  One obtains a regular foliation on \( G(h) \) by gluing the foliations given on each \( V_{\alpha} \) by the level sets of \( V_{\alpha} \hookrightarrow U_{\alpha} \times \RB^{q} \to \RB^{q} \).
  The \emph{graph} of \( h \) is the map \( i \colon M \to G(h) \) obtained by gluing the graphs \( U_{\alpha} \to \Gamma(f_{\alpha}) \), well-defined by construction of \( G(h) \).
\end{proof}

While the adapted geometries just described come from the graph construction, we envision in applications other constructions of adapted geometries, which could be more direct, depending on the calculation.
For one example, the extension of Godbillon-Vey forms to the singular case, described in a following subsection, does not rely on the graph of the Haefliger structure, but rather on an arbitrary choice of adapted geometry for the foliation.

\subsection{Chern-Weil for general Haefliger structures}

This section is devoted to proving the general applicability of our theory;
specifically, we show that the class of Haefliger-singular foliations with adapted geometries is homotopically identical to that of all smooth Haefliger structures on manifolds.
Let $\FS^{q}_{sing}$ denote the category whose objects are Haefliger-singular foliations of codimension $q$ with an adapted geometry, and whose morphisms are Haefliger-singular maps pulling back the foliation and geometry on the codomain to that on the domain.
Note that the objects in \( \FS^{q}_{sing} \) are necessarily on smooth manifolds of dimension at least \( q \).
Let $\HH^{q}_{man}$ denote the category whose objects are arbitrary smooth Haefliger structures on manifolds of dimension at least $q$, and whose morphisms are smooth functions pulling back the Haefliger structure on the codomain to that on the domain.

\begin{thm}
  The inclusion of categories $\FS_{sing}^{q}\hookrightarrow\HH^{q}_{man}$ is a homotopy equivalence.
  Consequently, the characteristic map for all smooth Haefliger structures on manifolds dimension at least \( q \) is given functorially by the Chern-Weil map for Haefliger-singular foliations.
\end{thm}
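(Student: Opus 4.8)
The plan is to read ``homotopy equivalence of categories'' as the assertion that the inclusion induces an equivalence of the associated homotopy categories, and to obtain this by checking that it is essentially surjective and full up to homotopy, with faithfulness as the delicate completing point. Throughout, the guiding principle is that all the relevant constructions are homotopy invariants, so it suffices to control objects and morphisms only up to homotopy. The two substantive geometric inputs—Proposition \ref{homotopy} (perturbation to a Haefliger-singular map) and Corollary \ref{existence} (existence of adapted geometries)—are already in hand, so the work is purely one of assembly.

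First I would establish essential surjectivity. Let $(M,h)$ be an object of $\HH^{q}_{man}$, a smooth Haefliger structure on a manifold of dimension at least $q$. By Proposition \ref{graph} there is a Haefliger embedding $i\colon M\to G(h)$ into the regular foliation on $G(h)$, with $h=i^{*}\FF_{G(h)}$. A regular foliation is in particular Haefliger-singular, so Proposition \ref{homotopy} applies with $N=G(h)$ and homotopes $i$ to a Haefliger-singular map $i'\colon M\to G(h)$. The pullback $h':=(i')^{*}\FF_{G(h)}$ is then a Haefliger-singular foliation on $M$; it is homotopic to $h$ by homotopy invariance of the pulled-back Haefliger structure, and by Corollary \ref{existence} it carries an adapted geometry. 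Hence $(M,h')$ is an object of $\FS^{q}_{sing}$ homotopic to $(M,h)$. For fullness up to homotopy, take a morphism $f\colon(M,h_{M})\to(N,h_{N})$ of $\HH^{q}_{man}$; by the previous step I may replace $(N,h_{N})$ up to homotopy by a Haefliger-singular foliation with adapted geometry $(\epsilon,\nabla)$. Proposition \ref{homotopy} homotopes $f$ to a Haefliger-singular map $f'$, and Theorem \ref{functoriality} shows $(f'^{*}\epsilon,f'^{*}\nabla)$ is an adapted geometry for $(M,(f')^{*}\FF)$. Thus $f'$ is a morphism of $\FS^{q}_{sing}$ homotopic to $f$.

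The hard part will be faithfulness: two Haefliger-singular morphisms that become homotopic in $\HH^{q}_{man}$ must already be homotopic \emph{through} Haefliger-singular maps. A homotopy in $\HH^{q}_{man}$ is a smooth $H\colon M\times[0,1]\to N$ agreeing with the given maps on the ends, and to conclude I would apply a version of Proposition \ref{homotopy} relative to the boundary $M\times\{0,1\}$, perturbing $H$ to a Haefliger-singular homotopy without disturbing its already-Haefliger-singular endpoints. This rests on the same jet-transversality mechanism: the transversality and dense-preimage conditions (Lemma \ref{thm-lem dense pullback-ers are dense}) cut out residual, hence dense, sets, and these restrict to residual sets on the interior, so the perturbation can be taken to fix a neighbourhood of the closed set $M\times\{0,1\}$. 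Verifying that this relative perturbation genuinely preserves the endpoints while achieving dense regularity on the interior is the one place requiring care beyond a direct citation of the results already proved; with it, the induced functor on homotopy categories is essentially surjective, full, and faithful, hence an equivalence.

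Finally, the stated consequence is immediate from what precedes. For any object of $\FS^{q}_{sing}$, Theorem \ref{cwsing} produces a Chern-Weil homomorphism $\lambda_{\epsilon,\nabla}\colon WO_{q}\to\Omega^{*}(M)$ whose descent to cohomology equals $\eta_{\FF}^{*}u$, the Haefliger characteristic map, and Theorem \ref{functoriality} renders this assignment functorial under Haefliger-singular maps. Transporting along the homotopy equivalence just established, and using that the universal characteristic map factors through $\HH^{q}_{htpy}$, one concludes that the Chern-Weil map computes the Gel'fand-Fuks characteristic classes of every smooth Haefliger structure on a manifold of dimension at least $q$, functorially in the homotopy category.
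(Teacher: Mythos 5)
Your first two steps are exactly the paper's proof. The theorem is stated there with an immediate end-of-proof mark precisely because it is the assembly you describe: Proposition \ref{graph} embeds any $(M,h)$ into the regular foliation on its graph $G(h)$, Proposition \ref{homotopy} perturbs the embedding (and any morphism) to a Haefliger-singular map, Corollary \ref{existence} supplies adapted geometries, and Theorem \ref{functoriality} makes pulled-back geometries adapted, so that every object and every morphism of $\HH^{q}_{man}$ is homotopic to one of $\FS^{q}_{sing}$. That weaker statement is all the paper means by ``homotopy equivalence'' (this is spelled out in its introduction), and it suffices for the characteristic-class consequence, since both characteristic maps factor through $\HH^{q}_{htpy}$ by homotopy invariance of cohomology. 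Your reading of the theorem as a genuine equivalence of homotopy categories is stricter than the paper's, and the faithfulness step you add is the only place where your argument and the paper's differ.

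That extra step, as sketched, contains a genuine error: you propose to perturb the homotopy $H\colon M\times[0,1]\to N$ while \emph{fixing a neighbourhood} of $M\times\{0,1\}$. But only the restriction of $H$ to $M\times\{0,1\}$ is known to be densely regular; nothing in the hypotheses prevents $H$ from being singular on a set with nonempty interior inside every slab $M\times(0,\varepsilon)$, and whatever you freeze survives into the perturbed map, whose regular set then fails to be dense --- the output is not a Haefliger-singular homotopy. The repair is to first reparametrise, replacing $H(x,t)$ by $H(x,\varphi(t))$ with $\varphi$ smooth, equal to $0$ near $t=0$ and to $1$ near $t=1$: near the ends the homotopy becomes $f_{i}\circ\mathrm{pr}_{M}$, whose regular set is the preimage of the dense regular set of $f_{i}$ and hence dense there. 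Only after this can one run the jet-transversality argument and Lemma \ref{thm-lem dense pullback-ers are dense} relative to the ends, and even then you owe a verification that the residual-set (Baire) machinery of Proposition \ref{homotopy} survives restriction to the subspace of maps with prescribed boundary values, which is not addressed by citing the absolute statements. Note finally that morphisms of $\FS^{q}_{sing}$ pull back the adapted geometry as well as the foliation, so a homotopy through morphisms of $\FS^{q}_{sing}$ must account for that datum too; Theorem \ref{functoriality}, applied to the homotopy itself once it is Haefliger-singular, is what handles it.
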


This Theorem is an immediate consequence of the following Proposition.

\begin{prop}\label{homotopy}
Given a codimension-\( q \) Haefliger-singular foliation \( (N,\FF') \) and a manifold \( M \) of dimension at least \( q \), every smooth map \( f \colon M \to N \) is homotopic to a Haefliger-singular map.
\end{prop}
\begin{proof}
  \newcommand{\V}{\mathcal{V}}
  We will show the stronger statement that the subset of Haefliger-singular maps is dense (in fact, residual) in \( \cinfinity{M,N} \) equipped with the strong topology.
  Since \( \cinfinity{M,N} \) is locally path connected in this topology, there exists a continuous path to some Haefliger-singular map, which induces the homotopy.
  Fix \( m = \dim M \) and \( n = \dim N \).

  First suppose that the foliation on \( N \) is regular.
  Associated to \( (N,\FF') \) is a regular tangent plane distribution \( D \subset TN \), of codimension \( q \).
  The argument proceeds through the Thom jet-transversality theorem, to show that the set of smooth maps \( M \to N \) transverse to \( D \) on a dense subset of \( M \) is itself dense in \( \cinfinity{M,N} \).
  (As an aside, the integrability of \( D \) is not used in the following argument, which works for any regular smooth distribution.)

  In the \( 1 \)-jet bundle \( J^{1}(M,N) \), let \( \V \) denote the subset of \( J^{1}(M,N) \) comprising \( 1 \)-jets that are not transverse to \( D \);
  Although \( \V \) is not generally a manifold, it is stratified as a finite union of manifolds, each of positive codimension.
  To see this, note that the bundle map \( J^{1}(M,N) \to M \times N \) restricts to a bundle map \( \V \to M \times N \), with fibre over each \( (x,y) \in M \times N \) a certain variety \( \V_{0} \), and \( \V_{0} \) is a finite union of manifolds, each of positive codimension in the fibre \( J^{1}(M,N)_{(x,y)} \cong \hom(\RB^{m}, \RB^{n}) \).
  More precisely, \( \V_{0} \) is, up to isomorphism, the variety of elements in \( \hom(\RB^{m}, \RB^{n}) \) that pull back to \( 0 \) the \( q \)-form \( \d y^{1} \w \cdots \w \d y^{q} \) of \( \RB^{n} \);
  this condition determines polynomial equations in the matrix coefficients, which can be written out explicitly if required.
  Cf. the proof of Theorem 3.2.6 in \cite{Hirsch-differentialTopology}.
  Enumerate this finite collection of manifolds as \( W_{a} \), so that \( \V = \cup W_{a} \).

  The decomposition of \( \V \) is enough to show that the set of Haefliger-singular maps is dense in \( \cinfinity{M, N} \).
  First, observe that for any smooth map \( g \colon M \to N \), with  \( 1 \)-jet lift \( j^{1}(g) \colon M \to J^{1}(M, N) \), the set of points where \( g \) is not transverse to \( D \) is precisely \( j^{1}(g)^{-1}(\V) = \cup j^{1}(g)^{-1}(W_{a}) \).
  But each \( W_{a} \) has positive codimension, so if \( j^{1}(g) \) is transverse to each \( W_{a} \), the set \( j^{1}(g)^{-1}(\V) \) is contained in a finite union of positive-codimension submanifolds, so has dense complement---such \( g \) is Haefliger-singular.
  On the other hand, by Theorem 3.2.8 of \cite{Hirsch-differentialTopology}, the set of elements in \( \cinfinity{M,N} \) that have \( 1 \)-jet lift transverse to \( W_{a} \) is a residual set in \( \cinfinity{M,N} \).
  Since \( \cinfinity{M,N} \) is Baire (\cite{Hirsch-differentialTopology} Theorem 2.4.2), residual sets are dense.

  Now suppose more generally that \( (N,\FF') \) is Haefliger-singular, so that the regular set \( \tilde{N} \) is a dense open submanifold of \( N \).
  Let \( D \) be the (regular) tangent distribution over \( \tilde{N} \), and \( \V \) the set of \( 1 \)-jets with target in \( \tilde{N} \) and not transverse to \( D \).
  As in the regular case, \( \V \) is a \( \V_{0} \) bundle over \( \tilde{N} \), a finite union of positive-codimension submanifolds in \( J^{1}(M,N) \).
  Theorem 3.2.8 of \cite{Hirsch-differentialTopology} still applies, and the set of maps densely transverse to \( D \) remains residual in \( \cinfinity{M,N} \).
  On the other hand, the following holds, with proof deferred momentarily.
  \begin{lemma}\label{thm-lem dense pullback-ers are dense}
    Given a dense open set \( \tilde{N} \subset N \), let \( A \) be the set of smooth maps \( g \colon M \to N \) such that \( g^{-1}(\tilde{N}) \) is dense in \( M \).
    The set \( A \) is residual in \( \cinfinity{M,N} \).
  \end{lemma}
  The singular set of a map \( g \colon M \to N \) is contained in \( g^{-1}(\Sigma') \cup j^{1}(g)^{-1}(\V) \), with \( \Sigma' \) the singular set of \( N \).
  As such, any map in the intersection of the set of maps \( g \colon M \to N \) such that \( g^{-1}(\Sigma') \) has dense complement, and the set of maps \( g \) so that \( j^{1}(g) \) is transverse to \( \V \), is a Haefliger-singular map.
  Both sets are residual, so their intersection is residual, so is dense, as required.
\end{proof}

\begin{proof}[Proof of Lemma \ref{thm-lem dense pullback-ers are dense}]
  Consider the evaluation map \( \ev \colon \cinfinity{M,N} \times M \to N \).
  Since \( \ev \) is continuous, open, and surjective, the pullback \( U = \ev^{-1}(\tilde{N}) \) is open and dense in \( \cinfinity{M,N} \times M \).
  (The evaluation map is open because it is open on each product open \( \UU \times V \), which is a union of \( \UU \times \{x\} \) over \( x \) in \( V \).
  Each \( \ev(\UU \times \{x\}) \) is already open, sufficing to take \( \UU \) basic in \( \cinfinity{M,N} \), say the set of all maps \( M \to N \) whose graph lies in some open neighbourhood \( W \) of the graph of a fixed \( g_{0} \colon M \to N \).
  The projection of the set \( W \cap (\{x\} \times N) \) is open in \( N \), and equals \( \ev(\UU \times \{x\}) \).)

  That the set \( A \) is residual is a consequence of the following more general claim.
  Given topological spaces \( X, Y \) and a subset \( U \subset X \times Y \), denote by
  \[ U_{x} := \pi_{Y}\bigl((\{x\}\times Y)\cap U\bigr)\subset Y \]
  the \( x \)-slice of \( U \) in \( Y \), and by \( \XX \) the set of points \( x \in X \) so that \( U_{x} \) is dense in \( Y \).
  The claim is that if \( Y \) is a separable metric space and \( U \) is open and dense in \( X \times Y \), then \( \XX \) contains a residual subset of \( X \).

  To show this, let \( y_{n} \) be a countable dense sequence in \( Y \), for each \( n \) let \( B_{n} \) be the ball of radius \( 1/n \) about \( y_{n} \), and let
  \[ U^{B_{n}} := \{ x \in X \colon (\{x\} \times B_{n}) \cap U \neq \varnothing \} = \{ x \in X \colon  B_{n} \cap U_{x} \neq \varnothing \} . \]
  Each \( U^{B_{n}} \) is dense in \( X \), for if not, there would be a nonempty open \( V \subset X \) so that \( V \cap U^{B_{n}} = \varnothing \), and then \( V \times B_{n} \) would be an open nonempty set disjoint from \( U \).
  The sets \( U^{B_{n}} \) are open because \( U \) is open.
  The (residual) countable intersection
  \[ \XX' = \bigcap_{n \in \NB} U^{B_{n}} \]
  is contained in \( \XX \) by construction, which is to say that for each \( x \in \XX' \), we have \( U_{x} \) dense in \( Y \).
  Indeed, fix \( x \in \XX' \), and for any \( y \in Y \), take a subsequence so that \( y_{n} \to y \).
  There is for each \( n \) some \( y'_{n} \in B_{n} \cap U_{x} \), so that \( (x,y'_{n}) \in U \).
  The new sequence \( y'_{n} \) also converges to \( y \), so \( U_{x} \) contains a sequence converging to \( y \) for all \( y \in Y \), i.e. \( U_{x} \) is dense in \( Y \).
\end{proof}

\subsection{The singular Godbillon-Vey algorithm}

The classical Godbillon-Vey algorithm \cite{gv} for a regular, transversely orientable foliation $(M,\FF)$ of codimension 1 proceeds by choosing a nowhere vanishing 1-form $\omega$ defining $\FF$ and a 1-form $\eta$ such that
\[
  \d\omega = \eta\wedge\omega .
\]
The $3$-form $\eta\wedge \d\eta$ is closed, and its class in de Rham cohomology, which is independent of the choices of $\eta$ and $\omega$, is the Godbillon-Vey class of the foliation.  This algorithm is a key tool in the constructions of Roussarie \cite{gv} and Thurston \cite{thurston1} of foliations with nontrivial Godbillon-Vey invariant.

There is recent interest in using a similar Godbillon-Vey algorithm for singular foliations, for example \cite{machon}.
However, additional care is needed in the singular setting.
For example, while it might seem natural to allow all \( 1 \)-forms that satisfy the formal identity $\omega\wedge \d\omega = 0$, once \( \omega \) is allowed to vanish, this condition no longer suffices to guarantee that \( \omega \) is determined by a Haefliger structure.
Indeed, the question of when singular \( q \)-forms have an integrating factor is rather subtle (see Malgrange \cite{malgrange2}), even for codimension \( q = 1 \) (Malgrange \cite{malgrange1}).

For Haefliger foliations the situation is better, and the standard Godbillon-Vey construction can be extended to singular Haefliger structures.
We are not aware of a reference, but this can be considered a classical fact, because it can be shown using the Haefliger structure and a partition of unity.
It is also an easy consequence of our theory, as in the following Proposition.
Recall that a Haefliger structure is \emph{transversely orientable} if its normal bundle is orientable.

\begin{prop}[Singular Godbillon-Vey algorithm]\label{gvalg}
  Let $(M,\FF)$ be a transversely orientable, Haefliger-singular foliation of codimension $q$.
  There is a $q$-form $\omega$ and a \( 1 \)-form \( \eta \) on $M$ for which:
  \begin{enumerate}
    \item $\omega|_{\tilde{M}}$ is nowhere vanishing and defines the regular subfoliation,
    \item $\d\omega = \eta\wedge\omega$, and
    \item $(-1)^{q+1}\,\eta\wedge(\d\eta)^{q}$ is a closed form representing the Godbillon-Vey class of the foliation.
  \end{enumerate}
\end{prop}

\begin{proof}
  Let $h$ denote the Haefliger structure defining the foliation, and fix an adapted geometry $(\epsilon,\nabla)$ for $(M,\FF)$.
  Fix a basis on \( \RB^{q} \), decompose the tautological \( \RB^{q} \)- and \( \glf(\RB^{q}) \)-forms on \( \Fr_{2}(h) \) into components,
  \[ \omega^{1}_{h} = (\omega^{i}), \qquad \omega^{2}_{h} = (\omega^{i}_{j}) \qquad\quad \mbox{ for } i, j = 1, \ldots q , \]
  and let \( \hat{\omega} := \omega^1\wedge\cdots\wedge\omega^q \) and \( \hat{\eta} := -\Tr(\omega^2_h) \).
  From the first structure Equation \eqref{structureequations},
  \begin{align*}
    \d\hat{\omega} &= \sum_{k=1}^{q}(-1)^{k+1}\omega^1\wedge\cdots\wedge \omega^{k-1}\wedge\d\omega^{k}\wedge\omega^{k+1}\wedge\cdots\wedge\omega^{q}\\
    & = \sum_{k=1}^{q} (-1)^{k} \omega^1 \wedge \cdots \wedge \omega^{k-1} \wedge \omega^{k}_{j} \wedge \omega^{j} \wedge \omega^{k+1} \wedge \cdots \wedge \omega^{q} \\
    & = \sum_{k=1}^{q} - \omega^{k}_{k} \wedge \hat{\omega} = \hat{\eta} \wedge \hat{\omega}.
  \end{align*}
  Being $\SO(q)$-basic, the forms $\hat{\omega}$ and \( \hat{\eta} \) descend to $\Fr_{2}(h)/\SO(q)$, and so we may let \( \omega, \eta \) be the respective smooth pullbacks by the adapted section \( \sigma_{\epsilon}\sigma_{\nabla} \).
  It is clear that these satisfy conditions (1) and (2), and condition (3) follows from Theorem \ref{cwsing}, because the Godbillon-Vey invariant is represented by
  \[
    \lambda_{\epsilon,\nabla}(h_{1}c_{1}^{q}) = \Tr(\omega^2_h)\wedge(\d\Tr(\omega^2_h))^{q} = (-1)^{q+1}\eta\wedge(\d\eta)^{q} .
  \]
\end{proof}

\section{Discussion}

Our work opens up a number of directions for future research, which we briefly discuss now.

\noindent\textbf{Noncommutative geometry:} The study of singular foliations has gained substantial traction in recent years, following the seminal construction of the holonomy groupoid of a Stefan-Sussmann singular foliation by I. Androulidakis and G. Skandalis \cite{iakovos2} (as we show in Appendix \ref{app: Stefan-Sussman}, all foliations we consider fall into this class).  As is well-known, the Gel'fand-Fuks classes of a regular foliation play an important role in noncommutative geometry, where they define cyclic cocycles that pair with $K$-theory elements of the convolution algebra associated to the holonomy groupoid to yield numerical invariants \cite{cyctrans}.  We anticipate that our theory will facilitate the construction of analogous cocycles to pair with groupoid algebras of Haefliger-singular foliations, allowing deeper insight into the structure of the noncommutative leaf spaces of such objects.

\noindent\textbf{Construction of new examples:} Chern-Weil theory has long been used in the demonstration of the nontriviality of foliation characteristic classes by constructing appropriate regular foliations \cite{thurston1}.
The combination of Chern-Weil theory with the additional flexibility afforded by singular foliations allows for new constructions.
In a follow-up paper to this we plan to describe new examples of codimension \( 2 \) foliations with continuously varying Godbillon-Vey number.
That construction is made possible by allowing singularities, while the theory in this paper allows us to maintain control over the Godbillon-Vey number.

\appendix

\section{Haefliger structures and Stefan-Sussmann singular foliations}\label{app: Stefan-Sussman}

\newcommand{\vfSheaf}{\mathcal{D}}

Here we fill a small gap in the literature, showing that all smooth Haefliger structures on manifolds induce Stefan-Sussmann foliations.
First we recall the definition of a Stefan-Sussmann foliation, referring to \cite{iakovos2} for more details.

Consider a distribution \( \Delta \) on a smooth manifold \( M \), a fibrewise linear subset \( \Delta \subseteq TM \), where the choice of linear subspace \( \Delta_{x} \subseteq T_{x} M \) may be of varying dimension for each \( x \in M \).
Say that \( \Delta \) is \emph{smooth} if it is spanned by a subsheaf \( \vfSheaf \) of the sheaf \( \XF(M) \) of vector fields on \( M \), meaning that each vector in each \( \Delta_{x} \) extends locally to a vector field in \( \Delta \).
An \emph{integral submanifold} of a smooth distribution is an imbedded submanifold \( \iota \colon \Sigma \to M \) (not necessarily homeomorphic onto its image) that through each image point has tangent plane equal to the distribution, \( \d \iota (T_{x} \Sigma) = \Delta_{\iota(x)} \).
A smooth distribution is \emph{integrable}, or \emph{Stefan-Sussmann}, if it has an integral submanifold through each point;
in this case, the maximal integral submanifolds are unique through each point, and determine a partition of \( M \) into imbedded submanifolds.
Stefan \cite{stefan} and Sussmann \cite{sussmann} independently and near simultaneously discovered the conditions under which a smooth distribution is integrable, the statement as follows.
\begin{thm}[\cite{stefan}, \cite{sussmann}]
	On a smooth manifold \( M \), a smooth distribution \( \Delta \) (spanned by \( \vfSheaf \)) is integrable if and only if \( \Delta \) is invariant under the local flows of (local) vector fields in \( \vfSheaf \).
\end{thm}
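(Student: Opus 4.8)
The plan is to treat the two implications separately, the reverse direction (invariance under flows $\Rightarrow$ integrability) carrying essentially all of the difficulty. \emph{Necessity} is direct: suppose $\Delta$ is integrable, let $X \in \vfSheaf(U)$ be a local vector field with local flow $\phi^X_t$, and note that $X_y \in \Delta_y$ for every $y$ makes $X$ tangent to the maximal integral submanifold $L$ through each point. Hence the flow line $t \mapsto \phi^X_t(y)$ remains in $L$, and $\phi^X_t$ carries a neighbourhood of $y$ in $L$ diffeomorphically onto one of $\phi^X_t(y)$ in $L$; differentiating, $d(\phi^X_t)_y$ maps $T_y L = \Delta_y$ isomorphically onto $T_{\phi^X_t(y)}L = \Delta_{\phi^X_t(y)}$, which is exactly the asserted invariance.

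For \emph{sufficiency} I would first record that invariance forces the rank of $\Delta$ to be constant along flow lines: the map $d(\phi^X_t)_y \colon \Delta_y \to \Delta_{\phi^X_t(y)}$ is the restriction of a linear isomorphism and is onto by invariance, hence an isomorphism, so $\dim \Delta_y = \dim \Delta_{\phi^X_t(y)}$. Now fix $x_0 \in M$, put $k := \dim \Delta_{x_0}$, and choose $X_1, \dots, X_k \in \vfSheaf$ near $x_0$ whose values at $x_0$ form a basis of $\Delta_{x_0}$. Define
\[
\psi(t_1, \dots, t_k) := \bigl( \phi^{X_k}_{t_k} \circ \cdots \circ \phi^{X_1}_{t_1} \bigr)(x_0)
\]
for $(t_1, \dots, t_k)$ near $0 \in \RB^k$. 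The differential of $\psi$ at $0$ sends the standard basis to $X_1(x_0), \dots, X_k(x_0)$, so $\psi$ is an immersion near $0$; let $\Sigma$ denote its image, an imbedded $k$-dimensional submanifold.

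The crucial claim is that $T_y \Sigma = \Delta_y$ for every $y = \psi(a) \in \Sigma$. The space $T_y \Sigma$ is spanned by the partial derivatives $\partial \psi / \partial t_i$, and each of these is obtained by evaluating $X_i$ at an intermediate point of the flow composition and pushing it forward through the remaining flows $\phi^{X_{i+1}}_{a_{i+1}}, \dots, \phi^{X_k}_{a_k}$. Since $X_i$ lies in $\Delta$ at that intermediate point and each of these flow differentials preserves $\Delta$ by hypothesis, every partial derivative lies in $\Delta_y$, so $T_y \Sigma \subseteq \Delta_y$. By the constant-rank observation $\dim \Delta_y = k = \dim T_y \Sigma$, whence the inclusion is an equality and $\Sigma$ is an integral submanifold through $x_0$.

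Finally I would assemble these local integral submanifolds into global leaves: declaring $y \sim z$ whenever $z$ is reachable from $y$ by a finite composition of flows of fields in $\vfSheaf$ gives an equivalence relation, and the flow-box maps $\psi$ furnish compatible charts exhibiting each class as a connected imbedded integral submanifold, maximal and unique through each of its points, with these classes partitioning $M$. The main obstacle is the tangent-space computation in the sufficiency direction: one must combine invariance under the local flows with the constant-rank-along-flow-lines fact to promote the inclusion $T_y \Sigma \subseteq \Delta_y$ to an equality, despite the possibly varying dimension of $\Delta$ over $M$.
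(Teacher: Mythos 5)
The paper does not actually prove this theorem: it is imported wholesale from the literature, attributed to \cite{stefan} and \cite{sussmann} and closed with a tombstone, so there is no internal argument to compare yours against. Judged on its own terms, your proposal is correct, and it is essentially the classical flow-chart (orbit) argument. In the necessity direction, the one point you gloss is that the leaf \( L \) is only an immersed (weakly embedded) submanifold, so the assertion that the flow line remains in \( L \) for all \( t \) in the flow domain rests on the uniqueness/partition property of maximal leaves via the standard open--closed argument in \( t \); likewise, surjectivity of \( d\phi^{X}_{t}|_{\Delta_y} \) onto \( \Delta_{\phi^{X}_{t}(y)} \) uses invariance applied to \( \phi^{X}_{-t} \) as well as to \( \phi^{X}_{t} \). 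Both points are routine. Your sufficiency argument is the crux and is complete relative to the paper's definition of integrability: invariance forces \( \dim\Delta \) to be constant along concatenations of flow lines, the composite-flow map \( \psi \) is an immersion near \( 0 \), each partial derivative \( \partial\psi/\partial t_i \) is the value of \( X_i \in \vfSheaf \) at an intermediate point pushed forward through the remaining flows and hence lies in \( \Delta \), and the dimension count \( \dim T_y\Sigma = k = \dim\Delta_y \) upgrades the inclusion \( T_y\Sigma \subseteq \Delta_y \) to equality. This produces an integral submanifold through each point, which is exactly what the paper's definition of \emph{integrable} demands, so the equivalence is already established at that stage.

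One corrective remark: your closing paragraph, assembling orbits into maximal leaves that partition \( M \), is only a sketch, and your final sentence misidentifies where the difficulty lies. The tangent-space computation you flag as the main obstacle is in fact the easy, self-contained part; the genuinely hard content of the Stefan--Sussmann theorem is precisely the global statement you wave at --- smooth compatibility of the flow charts on a single orbit, weak embeddedness, and second countability of the resulting leaves. Since the paper states the partition into unique maximal integral submanifolds as a \emph{consequence} of integrability rather than as part of its definition, this omission does not open a gap in the stated equivalence, but as written that paragraph should be presented as a citation to the standard literature rather than as a proof.
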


Now, let $h \colon \check{\UU}\rightarrow\Gamma_{q}$ be a Haefliger cocycle over an open cover $\UU = \{U_{\alpha}\}_{\alpha\in\AF}$ of a manifold $M$ (Definition \ref{Gammacocycle}), with corresponding Haefliger charts $f_{\alpha}:=s\circ h_{\alpha\alpha} \colon U_{\alpha}\rightarrow\RB^{q}$.
There is a naturally associated differential ideal \( \I \) on \( M \), generated locally in each Haefliger chart by \( \omega_{\alpha}^{i} := f_{\alpha}^{*} \d x^{i} \) (where the \( x^{i} \) are standard coordinates on \( \RB^{q} \)).
The ideal \( \I \) is \emph{formally integrable}, generated algebraically by the 1-forms \( \omega_{\alpha}^{i} \); equivalently, for each \( i = 1, \ldots, q \),
\[ \d \omega_{\alpha}^{i} \equiv 0 \mod{\omega_{\alpha}^{j}, \quad j = 1,\ldots q} . \]
(In fact, \( \d \omega_{\alpha}^{i} = 0 \) identically, but it is the weaker displayed condition that is coordinate independent, and crucial.)
To this ideal is associated a sheaf of vector fields; for \( U \) open in \( M \),
\[ \vfSheaf(U) = \left\{ X \in \XF(U) \colon \omega(X) = 0 \mbox{ all } \omega \in \I(U) \right\} . \]

The sheaf \( \vfSheaf \) spans a smooth distribution \( \Delta \subset TM \), and per the Stefan-Sussmann Theorem, \( \Delta \) is seen to be intregrable once it is shown to be invariant under flows of elements of \( \vfSheaf \).
\begin{prop}
  The distribution \( \Delta \) associated to a Haefliger cocycle on a manifold $M$ is integrable.
\end{prop}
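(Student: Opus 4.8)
The plan is to verify, directly, the single hypothesis of the Stefan-Sussmann theorem quoted above: that $\Delta$ is invariant under the local flows of the (local) vector fields in $\vfSheaf$. Since invariance is a local condition, it suffices to work in the domain $U_\alpha$ of a single Haefliger chart $f_\alpha$, where $\I$ is generated by the forms $\omega_\alpha^i = f_\alpha^*\d x^i$, which are closed (indeed $\d\omega_\alpha^i = 0$ identically, as already noted).

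The first step is to record the key infinitesimal identity. For any $X \in \vfSheaf(U_\alpha)$ and each $i$, Cartan's formula gives
\[ L_X \omega_\alpha^i = \d\bigl(\iota_X \omega_\alpha^i\bigr) + \iota_X \d\omega_\alpha^i = \d(0) + \iota_X(0) = 0 , \]
using that $\iota_X \omega_\alpha^i = \omega_\alpha^i(X) = 0$ by definition of $\vfSheaf$, together with closedness of $\omega_\alpha^i$. Thus every local generator of $\I$ is annihilated by the Lie derivative along any field in $\vfSheaf$.

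The second step is to integrate this identity. If $\phi_t$ denotes the local flow of $X \in \vfSheaf$, then $\tfrac{d}{dt}\phi_t^*\omega_\alpha^i = \phi_t^* L_X \omega_\alpha^i = 0$, so $\phi_t^*\omega_\alpha^i = \omega_\alpha^i$ for all $t$ in the domain of the flow, and hence $\phi_t$ preserves the ideal $\I$. From this I would deduce that $\phi_t$ preserves $\vfSheaf$ itself: for $Z \in \vfSheaf$ and each generator,
\[ \omega_\alpha^i\bigl((\phi_t)_* Z\bigr)_p = \bigl(\phi_t^*\omega_\alpha^i\bigr)_{\phi_{-t}(p)}\bigl(Z_{\phi_{-t}(p)}\bigr) = \bigl(\omega_\alpha^i\bigr)_{\phi_{-t}(p)}\bigl(Z_{\phi_{-t}(p)}\bigr) = 0 , \]
so the pushed-forward field $(\phi_t)_* Z$ again annihilates all generators, hence all of $\I$, i.e. lies in $\vfSheaf$. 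Since $\Delta$ is spanned pointwise by the values of sections of $\vfSheaf$, and $\d\phi_t$ is a linear isomorphism carrying spanning sets to spanning sets, this yields $(\phi_t)_*\Delta \subseteq \Delta$; applying the same reasoning to $-X$, whose flow is $\phi_{-t}$, gives the reverse inclusion, and therefore $(\phi_t)_*\Delta = \Delta$. With invariance in hand, the Stefan-Sussmann theorem immediately delivers integrability.

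I expect the argument to be essentially mechanical once closedness of the $\omega_\alpha^i$ is exploited, so the only points requiring care are bookkeeping. I would confirm that the computation is genuinely chart-independent---on an overlap $U_\alpha \cap U_\beta$ the families $\omega_\alpha^i$ and $\omega_\beta^j$ generate the same ideal $\I$, by way of the transition diffeomorphisms, so both membership in $\vfSheaf$ and the vanishing of the Lie derivatives are unambiguous---and that restricting to sufficiently short flow times keeps each trajectory inside a single chart, legitimising the local computation. The main thing to emphasise is that nothing in this argument assumes $\Delta$ has constant rank: the identity $L_X\omega_\alpha^i = 0$ holds pointwise everywhere, so the singular points (the critical points of the $f_\alpha$) are treated on exactly the same footing as the regular ones. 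This is precisely the flexibility that the Stefan-Sussmann framework is built to accommodate, and I therefore anticipate no genuine obstacle.
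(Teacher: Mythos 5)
Your proof is correct and takes essentially the same approach as the paper's: both reduce to a single Haefliger chart, establish $\Phi_t^*\omega_\alpha^i = \omega_\alpha^i$ for the local flow $\Phi_t$ of a field in the tangent sheaf, transport sections of the sheaf along the flow to obtain invariance of $\Delta$, and then invoke the Stefan--Sussmann theorem. The only cosmetic difference is how invariance of the generators is obtained: you use Cartan's formula together with $\d\omega_\alpha^i = 0$, whereas the paper integrates $\d f_\alpha(X) = 0$ to conclude $f_\alpha \circ \Phi_t = f_\alpha$ and pulls vector fields back rather than pushing them forward.
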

\begin{proof}
  It suffices to show for each point \( x \in M \) and vector field \( X \in \vfSheaf \) defined near \( x \), that \( \d \Phi_{t} (\Delta_{x}) = \Delta_{\Phi_{t}(x)} \), with \( \Phi_{t} = \Phi_{t}^{X} \) the local flow of \( X \).
  But this suffices to be shown for small enough \( t \) that a neighbourhood of the curve \( t \mapsto \Phi_{t}(x) \) is contained in the domain of a single Haefliger chart \( f_{\alpha} \colon U_{\alpha} \to \RB^{q} \).

  Now, the subspace \( \Delta_{y} \) depends only on the stalk of \( \vfSheaf \) at \( y \), which in turn depends only on the stalk of \( \I \) at \( y \).
  The latter is invariant under local flows along \( \vfSheaf \), so the former is too.
  Explicitly, that \( (\Phi_{t}^{*} \I)_{x} = \I_{x} \) follows from the observation that \( f_{\alpha} \composed \Phi_{t} = f_{\alpha} \) and thus
  \[ (\Phi_{t}^{*} \omega_{\alpha}^{i})_{x} = \Phi_{t}^{*} f_{\alpha}^{*} \d x^{i} =  (f_{\alpha} \Phi_{t})^{*} \d x^{i} = f_{\alpha}^{*} \d x^{i} = (\omega_{\alpha}^{i})_{x} .
  \]
\end{proof}

\section{The diffeological generalisation of Haefliger's classifying theorem}\label{app: smooth Haefliger classifying}

In \cite{ha6}, Haefliger uses Milnor's infinite join construction to construct the classifying space \( B\Gamma \) for any topological groupoid $\Gamma$, so that concordance classes of continuous, numerable $\Gamma$-structures on a topological space $X$ are in bijective correspondence with homotopy classes of continuous maps $X\rightarrow B\Gamma$.
This appendix provides both a detailed review and a diffeological rephrasing of definitions and constructions concerning classifying spaces, before proving a generalisation of Haefliger's classifying theorem for diffeological groupoids \( \Gamma \).
It is via diffeological methods that we are able to view our classifying theorem as an honest generalisation of that of Haefliger, and give the de Rham-theoretic universal characteristic map from Gel'fand-Fuks cohomology to the cohomology of the smooth Haefliger classifying space.
Ultimately, this de Rham characteristic map is necessary to facilitate a clean identification of Chern-Weil classes with those coming from the classifying space.
We find that the extension from the topological to the diffeological category requires enough non-trivial  modifications that it is worth providing the details.

The proof of Haefliger's Theorem given here factors through Mostow's smooth structure for classifying spaces \cite{mostow}, which we identify with a diffeology on \( B\Gamma \) that we call the \emph{Mostow diffeology}.
Haefliger's construction can easily be identified with the topological version of Definition \ref{bgamma} \cite[p. 278]{mostow}, provided one equips $B\Gamma$ with the so-called \emph{strong topology} (defined below).
On the other hand, the Mostow diffeology induces its D-topology on \( B\Gamma \), which has fewer opens than the strong topology.
Since any smooth map of a diffeological \( X \) into \( B\Gamma \) is automatically continuous with respect to the D-topology, the smooth map is also continuous for the strong topology.
In other words, smooth maps to \( B\Gamma \) will by definition be continuous in Haefliger's sense.

An alternative diffeological approach to classifying spaces, which is insufficient for our purposes, is presented in \cite{magnot3}.

\subsection{Diffeology}
We begin by briefly recalling the framework of diffeology, referring to the book \cite{diffeology} for details. A \emph{diffeological space} is a set \( X \) with a \emph{smooth structure}, determined by a collection of \emph{plots} \( V \to X \), as \( V \) ranges over the open subsets of finite dimensional Euclidean spaces.
This collection of plots, called a \emph{diffeology}, must satisfy three axioms: constant maps are plots, all maps that are locally given by plots are plots, and closure under precomposition by smooth functions between open subsets of Euclidean space.
(The map from the empty set is vacuously locally a plot, so a plot).
A diffeology on \( X \) determines a topology on \( X \), \emph{the D-topology}, which is the finest topology for which all plots are continuous---a subset \( U \subset X \) is D-open if and only if \( P^{-1}(U) \) is open for all plots \( P \) of \( X \).
Unless otherwise stated, diffeological spaces will always be assumed to carry the D-topology.
Conversely, any topological space carries a natural diffeology, called the \emph{continuous diffeology}, whose plots are the continuous maps.

A map $f \colon X_{1}\rightarrow X_{2}$ of diffeological spaces is \emph{smooth} if \( f \composed P \) is a plot of \( X_2 \) for each plot \( P \) of $X_{1}$.
By definition, smooth maps between diffeological spaces are continuous with respect to the underlying \( D \)-topologies.
Given a \( D \)-open subset \( U \subset X_1 \), a map \( f \colon U \to X_2 \) is \emph{locally smooth} if, for each plot \( P \) of \( X_1 \), the composition \( P^{-1}(U) \to U \to X_2 \) is a plot of \( X_2 \).
Remark that local smoothness of \( f \) depends on all plots of \( X_{1} \), not just the ones with image in \( U \).

Subsets and quotients of a diffeological space \( X \) carry natural diffeologies (the \emph{subspace} and \emph{quotient} diffeologies).
Furthermore, the category of diffeological spaces and smooth maps is both complete and cocomplete with respect to limits, and contains the category of manifolds and smooth maps as a full, faithful subcategory.
Finally, any diffeological space $X$ has an associated de Rham complex $(\Omega^{*}(X),\d)$, which is contravariantly functorial in the manner familiar from manifold theory, and which coincides with the usual de Rham complex when $X$ is a manifold.

\subsection{Semi-simplicial sets}

We now aim towards the simplicial set construction of classifying spaces.
A \emph{semi-simplicial object} in a category $\CC$ is a sequence $X^{\bullet} = \{X^{(n)}\}_{n\in\NB}$ of objects in $\CC$ together with morphisms $\partial_{i} \colon X^{(n+1)}\rightarrow X^{(n)}$ defined for $i=0,\dots,n+1$, called \emph{face maps}, satisfying the relations
\[ \partial_{i}\circ\partial_{j} = \partial_{j-1}\circ\partial_{i},\qquad\text{ for $i<j$}. \]
A \emph{morphism} of semi-simplicial objects $X^{\bullet}$ and $Y^{\bullet}$ in $\CC$ is a natural transformation, a family $\phi^{\bullet} = \{\phi^{(n)} \colon X^{(n)}\rightarrow Y^{(n)}\}$ of morphisms that commute with respective face maps.  Moreover, if $\CC$ is a Cartesian category, and $X^{\bullet}$ and $Y^{\bullet}$ are two semi-simplicial objects in $\CC$, then their product $X^{\bullet}\times Y^{\bullet}$ is also a semi-simplicial object in $\CC$, with face maps $\partial^{X\times Y}_{i}:=\partial_{i}^{X}\times\partial_{i}^{Y}$.  The following examples are key for the classifying space construction.

\begin{ex}
  Any diffeological groupoid $\Gamma$ has a natural associated semi-simplicial space $\Gamma^{\bullet}$, its \emph{nerve}, with $\Gamma^{(n)}$ the subspace of composable \( n \)-tuples in $(\Gamma^{(1)})^{n}$ and the face maps $\partial_{i} \colon \Gamma^{(n+1)}\rightarrow\Gamma^{(n)}$ defined by the formul\ae
  \[
    \partial_{i}(\gamma_{1},\dots,\gamma_{n+1}):=
    \begin{cases}
      (\gamma_{2},\dots,\gamma_{n+1}) & \text{ for } i=0 , \\
      (\gamma_{1},\dots,\gamma_{i}\gamma_{i+1},\dots,\gamma_{n+1}) & \text{ for } 1 \leq i \leq n \\
      (\gamma_{1},\dots,\gamma_{n})  & \text{ for } i=n+1
    \end{cases}
  \]
  for $n\geq1$.
  The zero-tuples \( \Gamma^{(0)} \) are the objects (identity morphisms) of \( \Gamma \), and the maps $\partial_{0},\partial_{1} \colon \Gamma^{(1)}\rightarrow\Gamma^{(0)}$ are the source and range maps respectively.
  Any morphism of groupoids induces an obvious morphism of their nerves.
  In particular, the map \( h \colon \check{\UU}\rightarrow\Gamma \) defining a smooth $\Gamma$-cocycle determines, by a mild abuse of notation, a map $h \colon \check{\UU}^{\bullet}\rightarrow\Gamma_{q}^{\bullet}$ of semi-simplicial diffeological spaces.

  Also useful are the smooth maps \( g_{i} \colon \Gamma^{(n)} \to \Gamma \) for each \( 0 \le i \le n \) that send an element of $\Gamma^{(n)}$ to the composition of its first \( i \) elements,
  \begin{align*}
    g_0(\gamma_{1},\dots,\gamma_{n}) & := s(\gamma_{1}) \\
    g_{i}(\gamma_{1},\dots,\gamma_{n}) & := \gamma_1 \cdots \gamma_{i} \quad \mbox{ for } i > 0 .
  \end{align*}
  Likewise, for a pair of such indices \( i, j \), define the smooth
  \[ g_{ij}(\gamma_{1},\dots,\gamma_{n}) = g_{i}^{-1}(\gamma_{1},\dots,\gamma_{n}) g_{j}(\gamma_{1},\dots,\gamma_{n}) . \]

\end{ex}

\begin{ex}
  Consider the natural numbers $\NB$ (including \( 0 \)), equipped with their zero-dimensional manifold structure.
  Define a semi-simplicial space $\NB^{\bullet}$ by setting $\NB^{(k)}$ to be the set of all strictly increasing $k+1$-tuples $\alpha_{0}<\cdots< \alpha_{k}$ of natural numbers.  The face maps $\partial_{j} \colon \NB^{(k)}\rightarrow\NB^{(k-1)}$ are given by omission: $(\alpha_{0},\dots,\alpha_{k})\mapsto (\alpha_{0},\dots,\hat{\alpha_{j}},\dots,\alpha_{k})$.
\end{ex}

Associated to any semi-simplicial diffeological space $X^{\bullet}$ is a diffeological space, the \emph{fat realisation} \( \|X^{\bullet}\| \).
For each $n\in\NB$, let $\Delta_{n}$ denote the standard $n$-simplex, thought of as a diffeological subspace of Euclidean space.
Let $(t_{0},\dots,t_{n})$ denote the barycentric coordinates on $\Delta_{n}$,
and denote by $d_{i} \colon \Delta_{n}\rightarrow\Delta_{n+1}$ the $i^{th}$ face inclusion.
The fat realisation is the quotient
\[
\|X^{\bullet}\|:=\bigg(\bigsqcup_{n\in\NB}\Delta_{n}\times X^{(n)}\bigg)/\sim
\]
by the relation $(d_{i}(\vec{t}),x)\sim(\vec{t},\partial_{i}(x))$ for any choice of $(\vec{t},x)\in\Delta_{n}\times X^{(n+1)}$, $n\geq0$ and $i=0,\dots,n+1$.

There are two natural families of maps on \( \|\NB^{\bullet}\times\Gamma^{\bullet}\| \).
First, for each $\alpha\in\NB$ there is a barycentric coordinate function $u_{\alpha} \colon \|\NB^{\bullet}\times\Gamma^{\bullet}\|\rightarrow[0,1]$, defined by the following family of functions
\[
\Delta_n \times \bigl( \NB^{(n)}\times\Gamma^{(n)} \bigr) \ni  (t_{0},\dots,t_{n};\alpha_{0},\dots,\alpha_{n};\gamma_{1},\dots,\gamma_{n}) \xmapsto{\qquad}
\begin{cases} t_{j} \quad \text{ if $\alpha = \alpha_{j}$}\\ 0 \quad \text{ otherwise}\end{cases} ,
\]
which are compatible with the the equivalence relation defining the fat realisation, so descend to a well-defined function.
Note that the sets $U_{\alpha}:=u_{\alpha}^{-1}(0,1]$ define a cover of \( \|\NB^{\bullet}\times\Gamma^{\bullet}\| \) (which will be an open cover in the \( D \)-topology of the Mostow diffeology defined below).
Second are the transition maps
$\gamma_{\alpha\beta} \colon U_{\alpha}\cap U_{\beta}\rightarrow\Gamma$, given by
\[ \gamma_{\alpha\beta}([t_{0},\dots,t_{n};\alpha_{0},\dots,\alpha_{n};\gamma_{1},\dots,\gamma_{n}]) := g_{ij}(\gamma_{1},\dots,\gamma_{n}) \qquad \mbox { for } \alpha_i = \alpha \mbox{ and } \alpha_j = \beta .
\]
\subsection{Simplicial realization of classifying spaces}
The fat realization can be used to construct the classifying space \( B \Gamma \) of a diffeological groupoid \( \Gamma \).
As a set, \( B\Gamma = \|\NB^{\bullet}\times\Gamma^{\bullet}\| \), but it will be useful to consider the \emph{Mostow diffeology}, a coarser diffeology than the quotient diffeology of the fat realization.
Being a coarser diffeology means that the set of Mostow plots contains the set of quotient plots, which in turn means that \( B\Gamma \) with the Mostow diffeology admits more smooth maps into.
While Mostow never invokes diffeology explicitly in \cite{mostow}, the Mostow diffeology defined here is implicit in his constructions, via the definition of differential forms \cite[Section 2]{mostow}.

The \emph{Mostow diffeology} is the largest diffeology (\cite[Section 1.24]{diffeology}) for which the maps $u_{\alpha} \colon \|\NB^{\bullet}\times\Gamma^{\bullet}\|\rightarrow[0,1]$ are smooth and the maps $\gamma_{\alpha\beta} \colon U_{\alpha}\cap U_{\beta}\rightarrow\Gamma$ are locally smooth.
With this diffeology, any map from a diffeological space \( \eta \colon X \to B\Gamma \) can be shown to be smooth by checking for all \( \alpha, \beta \in \NB \) that \( u_{\alpha} \circ \eta \) and \( \gamma_{\alpha\beta} \circ \eta_{\alpha\beta} \) are smooth, where \( \eta_{\alpha\beta} \) is the restriction to \( \eta^{-1}(U_{\alpha} \cap U_{\beta}) \).
In particular, it is not difficult from the definitions that every plot of the quotient diffeology on \( B\Gamma \) is a Mostow plot.
Essentially, the quotient plots are those that locally factor through the finite simplex sets \( \Delta_{n} \times \NB^{(n)} \times \Gamma^{(n)} \), and the \( u_{\alpha}, \gamma_{\alpha\beta} \) are (locally) smooth on each of these.

We call the the D-topology of the Mostow diffeology the \emph{Mostow topology} on \( B\Gamma \).
The maps \( u_{\alpha} \) and \( \gamma_{\alpha\beta} \) can also be used to define the \emph{strong topology}, as the smallest topology making $u_{\alpha}$ and $\gamma_{\alpha\beta}$ continuous.
Otherwise put, the preimages of opens in \( [0,1] \) by the \( u_{\alpha} \) and opens in \( \Gamma \) by the \( \gamma_{\alpha\beta} \) comprise a subbasis for the strong topology.
With the latter characterization, the strong topology on $B\Gamma$ is easily seen to be coarser than the Mostow topology, because smooth maps are always continuous with respect to the D-topology.

\begin{defn}\label{bgamma}
  Let $\Gamma$ be a diffeological groupoid. The \textbf{classifying space} $B\Gamma$ of $\Gamma$ is the diffeological space whose underlying set is $\|\NB^{\bullet}\times\Gamma^{\bullet}\|$, equipped with the Mostow diffeology.
  Call the open cover $\US:=\{U_{\alpha}\}_{\alpha\in\NB}$ the \textbf{canonical open cover}, and the smooth $\Gamma$-cocycle $\gamma \colon \check{\US}\rightarrow\Gamma$ the \textbf{canonical cocycle}.
  The Haefliger structure determined by $\gamma$ is the \textbf{canonical $\Gamma$-structure} on \( B\Gamma \).
\end{defn}

It is immediate from the definition that a smooth map \( \eta \colon X \to B\Gamma \) induces a \( \Gamma \)-cycle on \( X \), by pulling back the canonical \( \Gamma \)-structure on \( B\Gamma \).
It also follows from the definitions that classifying spaces are functorial: for any morphism $\phi \colon \Gamma_{1}\rightarrow\Gamma_{2}$ of diffeological groupoids, there is a smooth map $B\phi \colon B\Gamma_{1}\rightarrow B\Gamma_{2}$, which furthermore preserves the canonical cocycles: $u^{2}_{\alpha}\circ B\phi = u_{\alpha}^{1}$ and $\phi\circ\gamma^{1}_{\alpha\beta} = \gamma^{2}_{\alpha\beta}\circ B\phi$, with $u_{\alpha}^{i}$ and $\gamma_{\alpha\beta}^{i}$ the maps defining the canonical cocycle of $B\Gamma_{i}$.
Just as in the topological setting, Definition \ref{bgamma} is a special case of a more general construction, which assigns to any semi-simplicial diffeological $X^{\bullet}$ its \emph{unwound geometric realisation} $\mu(X^{\bullet})$ (cf. \cite{segal, tomdieck, mostow}).

\begin{ex}\label{ex: POU}
  Already the case of \( \Gamma = \{e\} \), the trivial groupoid, is interesting.
  The classifying space \( B \{e\} \) can be identified with the infinite simplex \( \Delta_{\infty} \), comprising the infinite non-negative sequences $t = \{t_{\alpha}\}_{\alpha\in\NB}$ for which only finitely many elements are nonzero and $\sum_{\alpha}t_{\alpha} = 1$.
  Under this identification, the \( u_{\alpha} \) are precisely the barycentric coordinates of \( \Delta_{\infty} \), which determine a canonical pointwise finite smooth partition of unity on \( \Delta_{\infty} \).
  From this identification, a choice of smooth map \( X \to B \{e\} \) is the same as a choice of pointwise finite smooth partition of unity on \( X \).

  The partition of unity can be improved.
  Mostow \cite[p. 273]{mostow} constructs from the barycentric coordinates a locally finite smooth partition of unity \( \{v_{\alpha}\}_{\alpha\in\NB} \) that is subordinate to the canonical cover of \( \Delta_{\infty} \).
  Furthermore, the induced map \( v = (v_{\alpha}) \colon \Delta_{\infty} \to \Delta_{\infty} \) is smoothly homotopic to the identity by the usual straight line homotopy.
  Fix once and for all the choice of \( (v_{\alpha}) \) on \( \Delta_{\infty} \), and denote also by \( (v_{\alpha}) \) the pullbacks to each \( B\Gamma \) by the map \( B\Gamma \to B \{ e \} \).

  These considerations show the following Lemma.
  \begin{lemma}\label{thm: homotopy to locally finite}
    The canonical $\Gamma$-structure $\gamma$ on $B\Gamma$ is smoothly numerable.
    Furthermore, every smooth \( \eta' \colon X \to B\Gamma \) is smoothly homotopic to \( \eta \) for which the pullback open cover \( \{ \eta^{-1}(U_{\alpha}) \} \) is locally finite, and the pullback partition of unity \( \{ \eta^{*} v_{\alpha} \} \) is subordinate to the pullback cover.
  \end{lemma}
\end{ex}

\subsection{The classifying map}
To justify the claim that \( B\Gamma \) is a smooth classifying space, it must be shown that smooth maps into it classify smooth \( \Gamma \)-structures (Definition \ref{Gammacocycle}).
This is the content of the following Theorem.

\begin{thm}\label{Appclassify}
  Let $\Gamma$ be a diffeological groupoid.
  \begin{enumerate}
  \item
    For any smoothly numerable $\Gamma$-structure $h$ on a diffeological space $X$, there is a smooth map $\eta \colon X\rightarrow B\Gamma_{q}$ such that $h = \eta^{*}\gamma$.

  \item
    If $\eta_{0},\eta_{1} \colon X\rightarrow B\Gamma$ are smooth maps, then the $\Gamma$-structures $\eta_{0}^{*}\gamma$ and $\eta_{1}^{*}\gamma$ are smoothly, numerably concordant if and only if $\eta_{0}$ and $\eta_{1}$ are smoothly homotopic.
  \end{enumerate}
\end{thm}
\begin{proof}
  (1)
  Let $h \colon \check{\VV}\rightarrow\Gamma$ be a $\Gamma$-cocycle for $X$, defined over a countable open cover $\VV = \{V_{\alpha}\}_{\alpha\in\NB}$ with smooth, subordinate, locally finite partition of unity $\{\lambda_{\alpha}\}_{\alpha\in\NB}$.
  Call the transition maps of \( h \) by \( h_{\alpha\beta} \colon V_{\alpha} \cap V_{\beta} \to \Gamma \).
  For each point \( x \) of \( X \), let $A_{x} := (\alpha_{0},\dots,\alpha_{n})$ be the finite, ordered list of indices supporting \( x \), those indices for which \( x \in \supp\lambda_{\alpha} \).
  Choose about each \( x \) a small D-open set \( W_{x} \) such that
  \begin{equation}\label{eq: good small open}
    \begin{aligned}
      W_{x} \cap \supp(\lambda_{\alpha}) = \varnothing \quad \mbox{ if } \alpha \not \in A_{x} , \\
      W_{x} \subset V_{\alpha} \quad \mbox{ if } \alpha \in A_{x} ,
    \end{aligned}
  \end{equation}
  which choice can be made because the partition of unity is locally finite;
  choose any open set about \( x \) that meets only finitely many supports, subtract the supports that don't meet \( x \), and then intersect with the finitely many opens \( V_{\alpha} \) for \( \alpha \in A_{x} \).
  Note that any \( y \in W_{x} \) is supported on the same indices as \( x \), so that \( A_{y} \subseteq A_{x} \).
  For each \( x \), with \( A_{x} = (\alpha_{0}, \ldots, \alpha_{n}) \), define the map \( \eta_{x} \colon W_{x} \to B\Gamma \)
  by formula
  \begin{equation}\label{eta}
    \eta_{x}(y) := [(\lambda_{\alpha_{0}}(y),\dots,\lambda_{\alpha_{n}}(y);\alpha_{0},\dots,\alpha_{n};h_{\alpha_{0}\alpha_{1}}(y),\dots,h_{\alpha_{n-1}\alpha_{n}}(y))] , \qquad y \in W_{x} ,
  \end{equation}
  which is well-defined by the choice of \( W_{x} \).

  The maps \( \eta_{x} \) agree on overlaps of domains, so can be glued to a single map of sets \( \eta \colon X \to B\Gamma \).
  To check this, we require that \( \eta_{x_{1}}(y) = \eta_{x_{2}}(y) \) for any point \( y \) in the intersection of two domains, \( y \in W_{x_{1}} \cap W_{x_{2}} \).
  It suffices, by symmetry in \( x_{1}, x_{2} \), to check that \( \eta_{x}(y) = \eta_{y}(y) \) for any \( y \in W_{x} \).
  As noted above, this means that \( A_{y} \subseteq A_{x} \), from which it is not difficult to see that \( \eta_{x}(y) \) is equivalent to \( \eta_{y}(y) \) in \( B\Gamma \) by the sequence of face maps that eliminate the zeros corresponding to the indices \( \alpha \in A_{x} \setminus A_{y} \).

  To show that \( \eta \) is smooth, it suffices to show that each restriction \( \eta_{x} \) is smooth, and to lighten notation, we may suppose that \( \eta = \eta_{x} \) and \( X = W_{x} \).
  As noted at the definition of the Mostow diffeology, \( \eta \) is smooth if \( u_{\alpha} \circ \eta \) and \( \gamma_{\alpha\beta} \circ \eta_{\alpha\beta} \) are smooth for all \( \alpha, \beta \in \NB \), with \( \eta_{\alpha\beta} \) the restriction of \( \eta \) to the preimage of \( U_{\alpha} \cap U_{\beta} \).
  It is immediate from the definition that
  \[ u_{\alpha} \circ \eta = \lambda_{\alpha} , \]
  so each \( u_{\alpha} \circ \eta \) is smooth.
  It also follows from this equality that the sets 
  \( \eta^{-1}(U_{\alpha}\cap U_{\beta}) = \lambda_{\alpha}^{-1}((0,1]) \cap \lambda_{\beta}^{-1}((0,1]) \)
  are open in \( X \), and non-empty only if \( \alpha, \beta \in A_{x} \).
  The map from the empty set is smooth, so suppose that \( \alpha, \beta \in A_{x} \).
  There are indices \( i, j \) such that \( \alpha_{i} = \alpha \) and \( \alpha_{j} = \beta \), and supposing \( i < j \),
  \[
    \gamma_{\alpha\beta} \circ \eta_{\alpha\beta} = \big(h_{\alpha_{0}\alpha_{1}}\cdots h_{\alpha_{i-1}\alpha_{i}}\big)^{-1}\big(h_{\alpha_{0},\alpha_{1}}\cdots h_{\alpha_{j-1},\alpha_{j}}\big) = h_{\alpha_{i}\alpha_{j}} = h_{\alpha\beta}
  \]
  is smooth because the transition functions \( h_{\alpha\beta} \) are smooth.

  Finally, the last two display equations hold true for the original classifying map \( \eta \) defined on all of \( X \).
  From this, it is easy that the \( \Gamma \)-structures \( h \) and \( h' = \eta^{*}\gamma \) are equivalent, because the pullback cover is given by opens \( V'_{\alpha} = \eta^{-1}(U_{\alpha}) = \lambda_{\alpha}^{-1}((0,1]) \subset V_{\alpha} \), and \( \eta^{*}_{\alpha\beta}\gamma_{\alpha\beta} = h_{\alpha\beta} \) upon restricting to \( V'_{\alpha} \cap V'_{\beta} \).

  (2)
  Suppose that $\eta_{0},\eta_{1} \colon X\rightarrow B\Gamma$ are smooth maps.
  If $\eta_{0}$ and $\eta_{1}$ are smoothly homotopic, say via a smooth \( \eta \colon X \times [0, 1] \to B\Gamma \), then up to a smooth homotopy, the induced \( \eta^{*} \gamma \) determines a smooth numerable concordance between \( \eta_{0}^{*} \gamma \) and \( \eta_{1}^{*} \gamma \) (Lemma \ref{thm: homotopy to locally finite}).
  Suppose conversely that $\eta_{0}^{*}\gamma$ and $\eta_{1}^{*}\gamma$ are smoothly, numerably concordant through a $\Gamma$-structure on $X\times[0,1]$.
  By item (1), this concordance is of the form $\eta^{*}\gamma$ for some smooth $\eta \colon X\times[0,1]\rightarrow B\Gamma$, so to prove (2) it suffices to show that \( \eta_{0} \) is homotopic to \( \eta|_{X \times \{0\}} \).
  In other words, we must show that $\eta_{0}^{*}\gamma = \eta_{1}^{*}\gamma$ implies that \( \eta_{0}, \eta_{1} \) are smoothly homotopic.
  We may furthermore assume, by Lemma \ref{thm: homotopy to locally finite}, that the \( \eta_{i} \) induce locally finite open covers.

  Our argument is inspired by those found in \cite[p.57-58]{husemoller} and in \cite[Proposition 3.16]{magnot3}.
  Let $B\Gamma^{0}$ and $B\Gamma^{1}$ denote the subsets of $B\Gamma$ consisting of tuples $[\vec{t},\vec{\alpha};\vec{\gamma}]$ for which $\vec{\alpha}$ consists entirely of even or odd numbers respectively.  Replacing the linear functions $\alpha_{n} \colon I_{n}:=[1-2^{-n}, 1-2^{-n-1}]\rightarrow[0,1]$ of \cite[p. 57]{husemoller} with smooth functions $b_{n} \colon I_{n}\rightarrow[0,1]$ which are everywhere nondecreasing, and constant on a small neighbourhood of each endpoint, the  arguments of \cite[p.57-58]{husemoller} can be used to show that the maps $h^{1} \colon B\Gamma\rightarrow B\Gamma$ and $h^{0} \colon B\Gamma\rightarrow B\Gamma$ defined by
  \begin{align*}
    h^{0}([t_{0},\dots,t_{k};\alpha_{0},\dots,\alpha_{k};\gamma_{1},\dots,\gamma_{k}]) & := [t_{0},\dots,t_{k};2\alpha_{0},\dots,2\alpha_{k};\gamma_{1},\dots,\gamma_{k}] , \\
    h^{1}([t_{0},\dots,t_{k};\alpha_{0},\dots,\alpha_{k};\gamma_{1},\dots,\gamma_{k}]) & := [t_{0},\dots,t_{k};2\alpha_{0}+1,\dots,2\alpha_{k}+1;\gamma_{1},\dots,\gamma_{k}], 
  \end{align*}
  are both smoothly homotopic to the identity.
  Hence we may replace $\eta_{i}$ with $h^{i}\circ\eta_{i}$, with values in $B\Gamma^{i}$, so that $\eta_{0}^{*}\gamma$ is defined over the cover $\{V_{2\alpha}:=\eta_{0}^{-1}(U_{2\alpha})\}_{\alpha\in\NB}$ while $\eta_{1}^{*}\gamma$ is defined over the cover $\{V_{2\alpha+1}:=\eta_{1}^{*}(U_{2\alpha+1})\}_{\alpha\in\NB}$.
  By assumption, $\eta_{0}^{*}\gamma$ and $\eta_{1}^{*}\gamma$ are equivalent, so there exists a \( \Gamma \)-cocycle on the union cover $\VV = \{V_{\alpha}\}_{\alpha\in\NB}$ with transition maps $h$ compatible with the transition maps of $\eta_{0}^{*}\gamma$ and $\eta_{1}^{*}\gamma$.

  Let \( \{ \lambda^{i}_{\alpha} := \eta_{i}^{*} v_{\alpha} \} \) be the respective pullback partitions of unity, which are locally finite and subordinate to the respective open covers.
  For each $x\in X$, fix an open \( W_{x} \) satisfying \eqref{eq: good small open} with respect to both partitions of unity, where the index sets \( A^{0}_{x} = (\alpha^{0}_{0},\dots,\alpha^{0}_{n_{0}}) \) and \( A^{1}_{x} = (\alpha^{1}_{0},\dots,\alpha^{1}_{n_{1}}) \) are as there.
  Restricted to \( W_{x} \), the maps \( \eta_{i} \) are
  \[
    \eta_{i}(y) = [\lambda^{i}_{0}(y),\dots,\lambda^{i}_{n_{i}}(y);\alpha^{i}_{0},\dots,\alpha^{i}_{n_{i}};h_{\alpha^{i}_{0}\alpha^{i}_{1}}(y),\dots,h_{\alpha^{i}_{n_{i}-1}\alpha^{i}_{n_{i}}}(y)], \qquad y \in W_{x} .
  \]
  The index sets \( A^{i}_{x} \) are disjoint, and their union \( A_{x} \) has length $n = n_{0}+n_{1}$.
  For $i=0,\dots,n$, define $t_{i} \colon W_{x}\times[0,1]\rightarrow[0,1]$ by
  \[
    t_{i}(y,s):=\begin{cases} (1-s)\,\lambda^{0}_{j}(y)&\text{ if $\alpha_{i} = \alpha^{0}_{j}$} \\ s\,\lambda^{1}_{j}(y)&\text{ if $\alpha_{i} = \alpha^{1}_{j}$}\end{cases} .
  \]
  Then the formula
  \[
    \eta(y,s) = [t_{0}(y,s),\dots,t_{n}(y,s);\alpha_{0},\dots,\alpha_{n};h_{\alpha_{0}\alpha_{1}}(y),\dots,h_{\alpha_{n-1}\alpha_{n}}(y)],\qquad y\in W_{x}.
  \]
  defines a homotopy $\eta \colon W_{x} \times[0,1]\rightarrow B\Gamma$.
  The homotopy is manifestly smooth on each \( W_{x} \), and an argument analogous to that in the proof of (1) demonstrates that they glue together to a smooth homotopy between $\eta_{0}$ and $\eta_{1}$ over all of \( X \).
\end{proof}

Finally, we detail our claim that our diffeological classifying theorem (Theorem \ref{Appclassify}) generalises Haefliger's topological classifying theorem, recalled here.

\begin{thm}[Haefliger's classifying theorem]\cite[Theorem 7]{ha6}\label{haefyclass}
  Let $\Gamma$ be a topological groupoid, and regard $B\Gamma$ with the strong topology.
  \begin{enumerate}
  \item
    For any numerable $\Gamma$-structure $h$ on a topological space $X$, there is a continuous map $\eta \colon X\rightarrow B\Gamma$ such that $h = \eta^{*}\gamma$.
  \item
    If $\eta_{0},\eta_{1} \colon X\rightarrow B\Gamma$ are continuous maps, then $\eta_{0}^{*}\gamma$ and $\eta_{1}^{*}\gamma$ are numerably concordant if and only if $\eta_{0}$ and $\eta_{1}$ are homotopic.
	\end{enumerate}
\end{thm}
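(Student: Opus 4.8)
The plan is to derive Haefliger's theorem (Theorem~\ref{haefyclass}) as the specialisation of the diffeological classifying theorem (Theorem~\ref{classify}) obtained by equipping the topological groupoid $\Gamma$, the space $X$, and the auxiliary model spaces (the real line, the interval $[0,1]$, and the simplices) with their continuous diffeologies. The whole argument reduces to a dictionary: under the continuous diffeology, every diffeological notion appearing in Theorem~\ref{classify} collapses onto its topological counterpart in Theorem~\ref{haefyclass}, after which the three numbered conclusions may be read off.

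First I would record the elementary facts about the continuous diffeology. For topological spaces whose topology is recovered by the $D$-topology of their continuous diffeology (manifolds, CW complexes, and more generally $\Delta$-generated spaces), a map is smooth precisely when it is continuous, since $f\composed P$ is continuous for all continuous plots $P$ if and only if $f$ is continuous for the $D$-topology. Consequently, with $\Gamma$ and $X$ carrying continuous diffeologies, a smooth morphism $\check{\UU}\to\Gamma$ is exactly a continuous Haefliger cocycle, a smooth real-valued function is exactly a continuous one, and a smooth partition of unity is exactly a continuous one. Hence ``smoothly numerable'' coincides with Haefliger's ``numerable'', and ``smoothly, numerably homotopic'' with ``numerably homotopic''.

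The central step is to identify the Mostow diffeology on $B\Gamma$ with the continuous diffeology associated to Haefliger's strong topology. Here I would show that, once the target $[0,1]$ and the simplices are themselves given continuous diffeologies, a map $P\colon U\to B\Gamma$ is a plot of the Mostow diffeology if and only if each $u_\alpha\composed P$ is continuous, each $P^{-1}(U_\alpha\cap U_\beta)$ is open, and each $\gamma_{\alpha\beta}\composed P$ is continuous into $\Gamma$ --- which is exactly the condition that $P$ be continuous for the strong topology, the weakest topology making the $u_\alpha$ and $\gamma_{\alpha\beta}$ continuous. It then follows that the $D$-topology of this diffeology is the strong topology and that smooth maps $X\to B\Gamma$ are precisely the continuous maps $X\to(B\Gamma,\mathrm{strong})$; combined with the identification of the underlying sets of the two classifying-space constructions already noted after Definition~\ref{bgamma}, the canonical cocycles correspond as well.

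With the dictionary in hand, each clause of Theorem~\ref{haefyclass} follows from the matching clause of Theorem~\ref{classify}: smooth numerability of $\gamma$ gives (1); the realisation of a numerable structure as $\eta^{*}\gamma$ with $\eta$ smooth, hence continuous, gives (2); and the smooth-homotopy classification gives (3). I expect the main obstacle to be the central step, because the Mostow diffeology is literally defined using the \emph{standard} smooth structure on $[0,1]$ and on the simplices, under which smooth real-valued functions on a continuous-diffeology space degenerate to the locally constant ones. The care required is to check that replacing these model spaces by their continuous diffeologies is the correct specialisation of Theorem~\ref{classify}, and that the resulting diffeology's $D$-topology is exactly the strong topology with no finer structure introduced --- in particular that the spaces in play ($X$ and $B\Gamma$) are sufficiently well-behaved (e.g.\ $\Delta$-generated, or of CW homotopy type as recalled in the proof of Theorem~\ref{sections}) for ``smooth'' and ``continuous'' to agree throughout.
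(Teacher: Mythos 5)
Your proposal is essentially the paper's own proof (Appendix \ref{appC}): equip $\Gamma$ and $X$ with their continuous diffeologies, identify the Mostow diffeology on $B\Gamma$ with the continuous diffeology induced by the strong topology (this is exactly the paper's Lemma \ref{difftop}), and then read the three items of Theorem \ref{haefyclass} off the corresponding items of Theorem \ref{classify}. The subtlety you flag at the end---that the Mostow diffeology is literally defined via the standard smooth structure on $[0,1]$ and the simplices, so the identification only works if these model targets are also reinterpreted with continuous diffeologies---is precisely the content the paper compresses into the unproved assertion that Lemma \ref{difftop} is ``an elementary consequence of the definitions,'' so your attempt is no less complete than the paper's own argument.
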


To see that Theorem \ref{haefyclass} follows from Theorem \ref{Appclassify}, equip the topological groupoid $\Gamma$ with the continuous diffeology.  Then the canonical partition of unity on the diffeological space $B\Gamma$ is smooth, hence continuous with respect to the D-topology, hence continuous with respect to the strong topology, which is contained in the D-topology.  That the second and third items of Haefliger's classifying theorem follow from the corresponding items of Theorem \ref{Appclassify} can be seen immediately by the application of the following lemma, whose proof is an elementary consequence of the definitions.

\begin{lemma}\label{difftop}
	Let $\Gamma$ be a topological groupoid, equipped with the continuous diffeology.  Then the corresponding Mostow diffeology on $B\Gamma$ coincides with the continuous diffeology on $B\Gamma$ induced by the strong topology.  Consequently, if $X$ is a topological space with the continuous diffeology, a map $\eta \colon X\rightarrow B\Gamma$ is smooth if and only if it is strongly continuous.
\end{lemma}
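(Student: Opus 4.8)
The plan is to prove that the Mostow diffeology and the continuous diffeology of the strong topology coincide by showing they declare exactly the same maps to be plots, after which the clause about maps $\eta\colon X\to B\Gamma$ falls out formally. Unwinding Definition \ref{bgamma}, a map $P\colon U\to B\Gamma$ is a Mostow plot precisely when every $u_\alpha\composed P$ is smooth, every preimage $P^{-1}(U_\alpha\cap U_\beta)$ is open, and every restriction $\gamma_{\alpha\beta}\composed P$ is a plot of $\Gamma$. On the other hand, since the strong topology is by definition the weakest topology making all the $u_\alpha$ and $\gamma_{\alpha\beta}$ continuous, a map $P$ is a plot of the continuous diffeology of the strong topology exactly when every $u_\alpha\composed P$ is continuous, every $P^{-1}(U_\alpha\cap U_\beta)$ is open, and every $\gamma_{\alpha\beta}\composed P$ is continuous. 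In both descriptions the plots are pinned down by the same family of test compositions, so the comparison reduces to matching the regularity required of the $u_\alpha\composed P$ and of the $\gamma_{\alpha\beta}\composed P$.

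One inclusion is immediate. If $P$ is a Mostow plot then each $u_\alpha\composed P$ is smooth, hence continuous; and because $\Gamma$ carries the continuous diffeology, a plot of $\Gamma$ is nothing but a continuous map, so each $\gamma_{\alpha\beta}\composed P$ is continuous. As the preimages $P^{-1}(U_\alpha\cap U_\beta)$ are open, $P$ is continuous for the strong topology. Thus every Mostow plot is a plot of the continuous diffeology of the strong topology.

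The reverse inclusion is the heart of the matter, and it is here that the hypothesis on $\Gamma$ is used decisively. Because $\Gamma$ carries the continuous diffeology, the targets of all the structure maps defining the Mostow diffeology are topological spaces equipped with their continuous diffeologies, in which a map is a plot if and only if it is continuous. The Mostow diffeology is therefore the initial diffeology on $B\Gamma$ for the family $\{u_\alpha,\gamma_{\alpha\beta}\}$ taken into continuous-diffeology targets; and for any family of maps into continuous-diffeology spaces the initial diffeology coincides with the continuous diffeology of the corresponding initial topology, which here is precisely the strong topology. Concretely, given a strongly continuous $P$, each $u_\alpha\composed P$ and each $\gamma_{\alpha\beta}\composed P$ is continuous, hence a plot of the relevant continuous-diffeology target, which is exactly what is needed for $P$ to be a Mostow plot. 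I expect the main obstacle to be exactly this reconciliation: one must verify that the ``smoothness'' demanded of the barycentric coordinate maps $u_\alpha$ in the definition of the Mostow diffeology reduces, in the continuous setting, to mere continuity, so that no genuine differentiability is imposed. Once the coordinate targets are read in their continuous diffeologies this is clear, but it is the one step that genuinely requires the hypothesis that $\Gamma$ be continuous, and it is where the argument must be pinned down carefully.

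With the coincidence of diffeologies in hand, the final clause is formal. Assigning to a topological space its continuous diffeology is functorial and carries continuous maps to smooth maps; conversely, for spaces whose topology is recovered by the $D$-topology of their continuous diffeology a smooth map is again continuous, so that smooth maps between continuous-diffeology spaces are exactly the continuous ones. Since $X$ carries the continuous diffeology and, by what we have proved, $B\Gamma$ carries the continuous diffeology of the strong topology, a map $\eta\colon X\to B\Gamma$ is smooth if and only if $\eta\composed P$ is strongly continuous for every plot $P$ of $X$, which is in turn equivalent to $\eta$ being continuous for the strong topology. This yields the asserted equivalence of smoothness and strong continuity, and completes the proof.
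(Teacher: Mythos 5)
Your first inclusion (every Mostow plot is a plot of the continuous diffeology of the strong topology) is fine, and in outline your proof is the definitional unwinding that the paper intends --- the paper offers no argument beyond calling the lemma an elementary consequence of the definitions. The genuine problem is in your reverse inclusion, at precisely the step you yourself flagged. In Definition \ref{bgamma} the Mostow diffeology requires the compositions $u_{\alpha}\composed P$ to be \emph{smooth} maps into $[0,1]$, where $[0,1]$ carries its standard diffeology as a subset of $\RB$; that target diffeology is fixed once and for all, and the hypothesis that $\Gamma$ carries the continuous diffeology weakens only the $\gamma_{\alpha\beta}$-conditions (plots of $\Gamma$ become exactly the continuous maps) while leaving the $u_{\alpha}$-conditions untouched. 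Your central claim --- that the Mostow diffeology is the initial diffeology for a family of maps \emph{all} of whose targets carry continuous diffeologies, and hence coincides with the continuous diffeology of the initial (strong) topology --- therefore fails for the $u_{\alpha}$-part of the family: a strongly continuous parameterisation $P$ has each $u_{\alpha}\composed P$ continuous, but not necessarily smooth. Concretely, already for the trivial groupoid one has $B\{e\}\cong\Delta_{\infty}$, and the parameterisation $P(x):=\bigl(\tfrac{1+f(x)}{2},\tfrac{1-f(x)}{2},0,\dots\bigr)$ with $f$ continuous but not differentiable at a point (say $f(x)=|x|/(1+|x|)$) is a plot of the continuous diffeology of the strong topology that is not a Mostow plot. ``Reading the coordinate targets in their continuous diffeologies,'' as you propose, is not a verification of the flagged step but a silent change of Definition \ref{bgamma}; note that genuine smoothness of the $u_{\alpha}$ into the standard $[0,1]$ is used elsewhere in the paper, e.g.\ to obtain the \emph{smooth} partition of unity in the proof of Theorem \ref{classify}(1), so it cannot be dismissed as an inessential convention.

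Consequently, relative to the literal Definition \ref{bgamma}, your argument establishes only the inclusion of the Mostow diffeology into the continuous diffeology of the strong topology, and the ``only if'' half of the final clause inherits the same defect. The lemma does become a tautology, and your proof goes through verbatim, once the Mostow construction is read in its topological version --- the version the paper itself invokes when identifying Haefliger's construction with ``the topological version of Definition \ref{bgamma}'' --- in which the $u_{\alpha}$ are required merely to be continuous (equivalently, smooth into $[0,1]$ equipped with its continuous diffeology). To close the gap you should either prove the lemma for that version and say so explicitly, or explain why the smoothness demanded of the $u_{\alpha}\composed P$ may legitimately be relaxed to continuity when $\Gamma$ is continuous; as written, the reverse inclusion is asserted at the one point where it can fail. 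A secondary caveat: in deriving the final clause you correctly note that smoothness implies continuity only for spaces whose topology is recovered by the D-topology of their continuous diffeology, but you then apply the equivalence to an arbitrary topological space $X$; the implication ``$\eta$ smooth $\Rightarrow$ $\eta$ strongly continuous'' tacitly requires the topology of $X$ to be generated by its continuous Euclidean probes, and this hypothesis (or a workaround) should be made explicit.
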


Suppose now that $h$ is a numerable $\Gamma$-structure on a topological space $X$.  Equipping $X$ with the continuous diffeology, $h$ is smoothly numerable, hence by Theorem \ref{Appclassify} is associated to a smooth (hence strongly continuous) map $\eta \colon X\rightarrow B\Gamma$ such that $h=\eta^{*}\gamma$.  Conversely, by Lemma \ref{difftop}, any strongly continuous map $\eta \colon X\rightarrow B\Gamma$ is automatically smooth when $X$ is equipped with the continuous diffeology and $B\Gamma$ with the Mostow diffeology, and Theorem \ref{Appclassify} then applies to yield a corresponding continuous $\Gamma$-structure $\eta^{*}\gamma$ on $X$.

Finally suppose that $\eta_{0},\eta_{1} \colon X\rightarrow B\Gamma$ are strongly continuous maps.  By Lemma \ref{difftop} they are then smooth for the continuous diffeology on $X$ and the Mostow diffeology on $B\Gamma$, and Theorem \ref{Appclassify} then applies to show that $\eta_{0}^{*}\gamma$ and $\eta_{1}^{*}\gamma$ are numerably concordant if and only if $\eta_{0}$ and $\eta_{1}$ are homotopic.

\section{\texorpdfstring{Sections of Haefliger bundles modulo $\O(q)$}{Sections of Haefliger bundles modulo O(q)}}\label{app: sections of Haefliger bundles}

Here we describe an explicit construction, inspired by Dupont \cite{dupont2}, of a smooth section of the bundle $\Fr_{\infty}(\gamma)/\O(q)\rightarrow B\Gamma_{q}$, from which the existence of analogous sections for all smoothly numerable Haefliger structures follows. Note that the existence of continuous sections follows from the fact that $B\Gamma_{q}$ (with the strong topology) has the homotopy type of a CW-complex together with contractibility of the fibre, but the existence of smooth sections in the diffeological setting requires additional argument.

To this end, we show how to construct a section \( \sigma \) of \( \Fr_{\infty}(\gamma)/\O(q) \) from any section \( \sigma_0 \) of \( \Fr_{\infty}(\RB^q) \to \RB^q \).
In fact, we will identify \( \Fr_{\infty}(\gamma) \) with the classifying space \( B\Gamma_{q,\infty} \) of a smooth groupoid \( \Gamma_{q,\infty} \), and then \( \sigma \) will given functorially as \( B\sigma_0 \).
The construction will furthermore show that any two smooth sections of a Haefliger bundle mod $\O(q)$ are smoothly homotopic, so that their induced characteristic maps (Definition \ref{universalmap}) coincide.

\begin{prop}
	For $k\in\NB\cup\{\infty\}$, define $\Gamma_{q,k}$ to be the action groupoid $\Gamma_{q}\ltimes\Fr_{k}(\RB^{q})$.
	Then $B\Gamma_{q,k}\rightarrow B\Gamma_{q}$ is a principal $G_{q}^{k}$-bundle over $B\Gamma_{q}$ that is canonically isomorphic to $\Fr_{k}(\gamma)\rightarrow B\Gamma_{q}$.
\end{prop}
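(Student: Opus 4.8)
The plan is to realise both spaces, over $B\Gamma_q$, through a single explicit diffeological model obtained by tracking frames through the classifying-space construction, and then to transport the principal bundle structure of $\Fr_k(\gamma)$ (already established in Proposition \ref{functorial}(1)) across the resulting isomorphism. The conceptual heart is a single composability computation which identifies the gluing law of the action groupoid with that of the Haefliger bundle; everything else is bookkeeping.

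First I would record the structure maps. The groupoid $\Gamma_q$ acts on $\Fr_k(\RB^q)$ by pushing forward $k$-frames: a germ $\gamma$ with source $x$ and target $y$ carries a $k$-frame at $x$ to one at $y$, well-defined since $k$-jets depend only on germs. Thus $\Gamma_{q,k}=\Gamma_q\ltimes\Fr_k(\RB^q)$ has object space $\Fr_k(\RB^q)$, morphisms $(\gamma,\varphi)$ with $s(\gamma,\varphi)=\varphi$ and $r(\gamma,\varphi)=\gamma\cdot\varphi$, and a smooth forgetful morphism $P\colon\Gamma_{q,k}\to\Gamma_q$, $(\gamma,\varphi)\mapsto\gamma$. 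Since $\Fr_k(\RB^q)\to\RB^q$ is the trivial principal $G^k_q$-bundle and its right $G^k_q$-action commutes with the $\Gamma_q$-action, there is an induced free right $G^k_q$-action $(\gamma,\varphi)\cdot g=(\gamma,\varphi\cdot g)$ on $\Gamma_{q,k}$ with quotient $\Gamma_q$. Applying the classifying-space functor and the functoriality recorded after Definition \ref{bgamma}, the map $BP\colon B\Gamma_{q,k}\to B\Gamma_q$ preserves the canonical covers and cocycles, $u^{q,k}_\alpha=u_\alpha\circ BP$ and $P\circ\gamma^{q,k}_{\alpha\beta}=\gamma_{\alpha\beta}\circ BP$, and intertwines the induced $G^k_q$-actions.

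Next I would write down the comparison map. With $g_0=r(\gamma_1)$ and $g_i=\gamma_1\cdots\gamma_i$ as in Definition \ref{bgamma}, composability of a tuple $((\gamma_1,\varphi_1),\dots,(\gamma_n,\varphi_n))$ forces $\varphi_i=\gamma_{i+1}\cdots\gamma_j\cdot\varphi_j$, whence for $i<j$ one has $\varphi_j=(\gamma_{i+1}\cdots\gamma_j)^{-1}\varphi_i=g_j^{-1}g_i\cdot\varphi_i=\gamma_{\alpha_j\alpha_i}(\cdot)\cdot\varphi_i$; this is exactly the gluing relation of $\Fr_k(\gamma)$. Consequently the formula
\[
\Psi[t_0,\dots,t_n;\alpha_0,\dots,\alpha_n;(\gamma_1,\varphi_1),\dots,(\gamma_n,\varphi_n)]:=\bigl[\bigl(BP(\cdot),\alpha_j,\varphi_j\bigr)\bigr]_\sim,\qquad t_j>0,
\]
is independent of the chosen index $j$. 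Compatibility with the face-map relations of the two fat realisations is a direct check, as are $G^k_q$-equivariance and the identity $\pi\circ\Psi=BP$. The inverse is defined over each canonical chart $U_\alpha$ of $\Fr_k(\gamma)$ by inserting the frame datum into a local representative of the underlying point of $B\Gamma_q$, the frames on the remaining simplices being pinned down by the cocycle $\gamma_{\alpha\beta}$.

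Finally I would verify smoothness against the Mostow diffeology on $B\Gamma_{q,k}$ and the quotient diffeology on $\Fr_k(\gamma)$, which is where the main (purely technical) obstacle lies. For a plot $P$ of $B\Gamma_{q,k}$, on the open set $\{u^{q,k}_\alpha\circ P>0\}$ the frame component of $\Psi\circ P$ equals $s\circ\gamma^{q,k}_{\alpha\alpha}\circ P$, which is smooth because the source map of $\Gamma_{q,k}$ is; these local plots glue by the well-definedness above, so $\Psi\circ P$ is a plot. Conversely, for a plot $Q$ of $\Fr_k(\gamma)$, the functions $u^{q,k}_\alpha\circ\Psi^{-1}\circ Q=u_\alpha\circ\pi\circ Q$ are smooth, and $\gamma^{q,k}_{\alpha\beta}\circ\Psi^{-1}\circ Q$ is reconstructed from the cocycle $\gamma_{\alpha\beta}$ of $B\Gamma_q$ together with the frame component of $Q$, hence is a plot of $\Gamma_{q,k}$; so $\Psi^{-1}\circ Q$ is a plot. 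Thus $\Psi$ is a $G^k_q$-equivariant isomorphism of diffeological spaces over $B\Gamma_q$. Since the canonical Haefliger structure $\gamma$ is smoothly numerable, Proposition \ref{functorial}(1) shows $\Fr_k(\gamma)\to B\Gamma_q$ is a principal $G^k_q$-bundle, and transporting this structure along $\Psi$ exhibits $B\Gamma_{q,k}\to B\Gamma_q$ as a principal $G^k_q$-bundle canonically isomorphic to $\Fr_k(\gamma)$, as claimed.
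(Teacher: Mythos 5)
Your proposal is correct. The comparison map you construct is the same canonical identification the paper uses: your composability computation, that the frame components of a composable tuple satisfy $\varphi_{j}=g_{j}^{-1}g_{i}\cdot\varphi_{i}=\gamma_{\alpha_{j}\alpha_{i}}(\cdot)\cdot\varphi_{i}$, is exactly the paper's observation that these frames obey the gluing relation defining $\Fr_{k}(\gamma)$ in Equation \eqref{haefy1}, and your $\Psi$ is the globalisation of the paper's chart-wise identification \eqref{locid}. Where you genuinely differ is in how principality is obtained, and in the logical order. The paper proves that $B\Gamma_{q,k}\rightarrow B\Gamma_{q}$ is principal \emph{first} and intrinsically: principality of $\Fr_{k}(\RB^{q})\rightarrow\RB^{q}$ makes the action map $a$ a diffeological induction, hence the induced action map $a_{\Gamma}$ on $\Gamma_{q,k}$ an induction, and a commuting diagram through the canonical charts $\gamma^{k}_{\alpha\beta}$ then shows $Ba_{\Gamma}$ is an induction; the identification with $\Fr_{k}(\gamma)$ comes only afterwards. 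You invert this: you establish the $G^{k}_{q}$-equivariant diffeomorphism $\Psi$ first (with a more explicit two-way smoothness check against the Mostow and quotient diffeologies than the paper supplies), and then transport the principal structure from $\Fr_{k}(\gamma)$, which is principal by Proposition \ref{functorial}(1) because $\gamma$ is smoothly numerable by Theorem \ref{classify}(1). This is legitimate: both cited results are proved before the appendix and independently of it, so there is no circularity, and conjugating the action map by an equivariant diffeomorphism over $B\Gamma_{q}$ preserves the induction property. What each route buys: yours is more economical on the bundle-theoretic side and makes the reliance on earlier machinery explicit; the paper's direct induction argument keeps the appendix self-contained and exhibits the principal structure as intrinsic to the classifying-space construction, established without any reference to $\Fr_{k}(\gamma)$, rather than inherited through an isomorphism.
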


\begin{proof}
	The groupoid $\Gamma_{q,k} = \Gamma_{q}\ltimes\Fr_{k}(\RB^{q}) = \Gamma_{q}\times_{s,\pi_{k}}\Fr_{k}(\RB^{q})$ is equipped with the subspace diffeology of the product $\Gamma_{q}\times\Fr_{k}(\RB^{q})$,
	meaning that a parameterisation $P \colon U\rightarrow\Gamma_{q}\ltimes\Fr_{k}(\RB^{q})$ is a plot if and only if each of its component maps $U\rightarrow\Gamma_{q}$ and $U\rightarrow\Fr_{k}(\RB^{q})$ are plots.

	A smooth action of a diffeological group $G$ on a diffeological space $X$ is by definition principal if and only if the action map $a \colon X\times G\ni(x,g)\mapsto(x,x\cdot g)\in X\times X$ is a diffeological induction \cite[Section 8.11]{diffeology}, meaning that $a$ is injective (the action is free) and each parameterisation $P \colon U\rightarrow X\times G$ is a plot if and only if $a\composed P$ is.
	Since $\Fr_{k}(\RB^{q})\rightarrow\RB^{q}$ is a principal $G_{q}^{k}$-bundle, the action map $a \colon \Fr_{k}(\RB^{q})\times G_{q}^{k}\rightarrow\Fr_{k}(\RB^{q})\times\Fr_{k}(\RB^{q})$ is an induction.
	It induces an action map $a_{\Gamma} \colon \Gamma_{q,k}\times G^{k}_{q}\rightarrow \Gamma_{q,k}\times\Gamma_{q,k}$ by the rule
	\[
	a_{\Gamma}(\gamma, \phi,g) = \big((\gamma,\phi), (\gamma,a(\phi,g))\big),
	\]
	which is inductive because $a$ is and by definition of the diffeology on $\Gamma_{q,k}$.
	The map $a_{\Gamma}$ induces in turn an action map $Ba_{\Gamma} \colon B\Gamma_{q,k}\times G^{k}_{q}\rightarrow B\Gamma_{q,k}\times B\Gamma_{q,k}$, given by the formula
	\[
	Ba_{\Gamma}\Bigl(\bigl[\vec{t};\vec{\alpha};\overrightarrow{(\gamma,\phi)}\bigr], g\Bigr)
	:=\Bigl(\bigl[\vec{t};\vec{\alpha};\overrightarrow{(\gamma,\phi)}\bigr], \bigl[\vec{t};\vec{\alpha};\overrightarrow{a_{\Gamma}(\gamma,\phi,g)}\bigr]\Bigr) .
	\]
	Here the composable tuple
	\[
	\overrightarrow{(\gamma,\phi)} = \big((\gamma_{1},\phi_{1}),\dots,(\gamma_{n},\phi_{n})\big) \in \Gamma_{q,k}^{(n)}
	\]
	is mapped to the composable tuple
	\[ \overrightarrow{a_{\Gamma}(\gamma,\phi,g)}
	= \big((\gamma_{1},a(\phi_{1},g)),\dots,(\gamma_{n},a(\phi_{n},g))\big) .
	\]

	The map $a_{\Gamma}$ preserves each open $U_{\alpha}$ of the canonical cover of $B\Gamma_{q,k}$, and is such that for any $\alpha,\beta\in\NB$, the diagram
	\begin{center}
		\begin{tikzcd}
			(U_{\alpha}\cap U_{\beta})\times G^{k}_{q} \ar[r, "Ba_{\Gamma}"] \ar[d, "\gamma_{\alpha\beta}^{k}\times\id"] & (U_{\alpha}\cap U_{\beta})\times (U_{\alpha}\cap U_{\beta}) \ar[d, "\gamma_{\alpha\beta}^{k}\times\gamma_{\alpha\beta}^{k}"] \\ \Gamma_{q,k}\times G^{k}_{q} \ar[r, "a_{\Gamma}"] & \Gamma_{q,k}\times\Gamma_{q,k}
		\end{tikzcd}
	\end{center}
	commutes.
	Indeed, this follows because the right action of $G^{k}_{q}$ on $\Fr_{k}(\RB^{q})$ commutes with the left action of $\Gamma_{q}$.
	That $Ba_{\Gamma}$ is an induction then follows from the inductivity of $a_{\Gamma}$ by a diagram chase.

	Finally, we come to identifying $B\Gamma_{q,k}$ with $\Fr_{k}(\gamma)$.  For this, observe that for any $\alpha\in\NB$ we have a canonical, $G_{q}^{k}$-equivariant identification of $U_{\alpha} \subset B\Gamma_{q,k}$ with $(s\circ\gamma_{\alpha\alpha})^{*}\Fr_{k}(\RB^{q})$, defined by
	\begin{equation}\label{locid}
		[\vec{t};\vec{\alpha};\overrightarrow{(\gamma,\phi)}]\mapsto\big([\vec{t};\vec{\alpha};\vec{\gamma}],\,\phi_{j}\big),
	\end{equation}
	where $\alpha = \alpha_{j}\in\vec{\alpha}$.
	Furthermore, if $[\vec{t};\vec{\alpha};\overrightarrow{(\gamma,\phi)}]\in U_{\alpha}\cap U_{\beta}$, where $\beta = \alpha_{k}\in\vec{\alpha}$, then one has $\phi_{j} = \gamma_{\alpha\beta}([\vec{t};\vec{\alpha};\vec{\gamma}])\cdot\phi_{k}$.  It follows that the local identifications of Equation \eqref{locid} patch together to a global identification of $B\Gamma_{q,k}$ with $\Fr_{k}(\gamma)$.
\end{proof}

Recall the natural identification of $G^{1}_{q}$ with $\GL(\RB^{q})$.
Recall too the inclusions
\[ \O(q) \xhookrightarrow{\quad} \GL(\RB^{q}) \xhookrightarrow{\quad} G^{\infty}_{q} , \]
defined by taking the infinite jet at \( 0 \) of diffeomorphisms that fix the metric and linear structures of \( \RB^q \) respectively.
The construction of a section of $\Fr_{\infty}(\gamma)/\O(q)\rightarrow B\Gamma_{q}$ requires the following lemma, which defines a canonical `exponential' path in \( G^{\infty}_{q}/\O(q) \) to each element from the identity equivalence class.  For this we make use of the fact that, as a projective limit of manifolds, $G_{q}^{\infty}$ admits a natural tangent structure \cite[Chapter 1]{varbi}.
\begin{lemma}\label{exp}
	There exists a $G^{1}_{q}$-equivariant diffeomorphism $\jetexp \colon \tBun_{\O(q)}\bigl(G^{\infty}_{q}/\O(q)\bigr)\rightarrow G^{\infty}_{q}/\O(q)$.
\end{lemma}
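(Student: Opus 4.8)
The plan is to exploit the semidirect-product structure of $G^{\infty}_{q}$ over its linear quotient, reducing the statement to two classical exponential diffeomorphisms---one for a nilpotent group and one for a symmetric space---and then to verify smoothness and equivariance in the diffeological, projective-limit setting.

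First I would record the relevant group theory. As recalled in Example~\ref{ex-frame characteristic map} (following \cite{natopdiffgeom}), the projection $G^{\infty}_{q}\to G^{1}_{q}=\GL(q,\RB)$ is a principal bundle whose fibre is its kernel $N^{\infty}$, a nilpotent group (a projective limit of nilpotent Lie groups) for which the Lie exponential $\exp\colon\mathfrak{n}^{\infty}\to N^{\infty}$ is a global diffeomorphism, being the projective limit of the corresponding finite-order statements. The inclusion $\GL(q,\RB)\hookrightarrow G^{\infty}_{q}$ splits this projection, so $G^{\infty}_{q}\cong N^{\infty}\rtimes\GL(q,\RB)$. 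Since $\O(q)\subset\GL(q,\RB)$ sits entirely in the linear factor, quotienting produces a diffeomorphism $G^{\infty}_{q}/\O(q)\cong N^{\infty}\times\bigl(\GL(q,\RB)/\O(q)\bigr)$, and the Cartan decomposition identifies $\GL(q,\RB)/\O(q)$ with the cone $\operatorname{Sym}^{+}(q)$ of positive-definite symmetric matrices via $A\,\O(q)\mapsto AA^{\mathsf T}$, the matrix exponential $\operatorname{Sym}(q)\to\operatorname{Sym}^{+}(q)$ being a diffeomorphism.

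Next I would identify the base-point tangent space. Splitting $\glf(q,\RB)=\so(q)\oplus\operatorname{Sym}(q)$ into antisymmetric and symmetric parts and using the semidirect structure gives $\tBun_{\O(q)}\bigl(G^{\infty}_{q}/\O(q)\bigr)=\gf^{\infty}_{q}/\so(q)\cong\mathfrak{n}^{\infty}\oplus\operatorname{Sym}(q)$, where the projective-limit tangent structure on $G^{\infty}_{q}$ from \cite{varbi} makes sense of the first factor. I would then define $\jetexp$ factorwise, as the product of the nilpotent exponential $\mathfrak{n}^{\infty}\to N^{\infty}$ with the matrix exponential $\operatorname{Sym}(q)\to\operatorname{Sym}^{+}(q)$. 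Each factor is a bijection, so $\jetexp$ is a bijection compatible with the product decomposition above, and $t\mapsto\jetexp(tv)$ furnishes the advertised canonical path from the identity class to any point.

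Two points require genuine care, and I expect them to be the main obstacles. The first is diffeological: one must check that $\jetexp$ and its inverse are smooth as maps of diffeological spaces. Here the symmetric factor is finite-dimensional and classical, while the nilpotent factor is controlled by realising $\exp\colon\mathfrak{n}^{\infty}\to N^{\infty}$ as the projective limit of the finite-order diffeomorphisms $\mathfrak{n}^{k}\to N^{k}$, so that plots are detected levelwise and the inverse (a limit of logarithms) is again smooth. The second is the $G^{1}_{q}$-equivariance. Since $G^{1}_{q}$ does not fix the base point of $G^{\infty}_{q}/\O(q)$, I would read the equivariance as the statement that $\jetexp$ extends, by left $G^{1}_{q}$-translation, to a $G^{1}_{q}$-equivariant map out of the full tangent bundle, restricting to $\jetexp$ on the base-point fibre; equivalently, that $\jetexp$ intertwines the $\O(q)$-isotropy action and is compatible with the transitive $G^{1}_{q}$-action on $\GL(q,\RB)/\O(q)$. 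Either way the verification reduces, factor by factor, to the naturality of the nilpotent exponential under the conjugation action of $G^{1}_{q}$ on $N^{\infty}$ and to the equivariance of the matrix exponential under the isotropy action on $\operatorname{Sym}(q)$; correctly fixing these actions so the statement typechecks is itself part of the work, but each resulting check is routine.
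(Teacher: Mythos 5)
Your proposal is correct and follows essentially the same route as the paper: both split $G^{\infty}_{q} \cong G^{1}_{q} \ltimes N$ using the splitting of the projection $G^{\infty}_{q} \to G^{1}_{q}$, deduce $G^{\infty}_{q}/\O(q) \cong \bigl(G^{1}_{q}/\O(q)\bigr) \times N$ with the corresponding tangent-space decomposition $(\gf^{1}_{q}/\so(q)) \oplus \mathfrak{n}$, and define $\jetexp$ factorwise, using the global diffeomorphism property of the nilpotent exponential $\mathfrak{n} \to N$ on the second factor. The only divergence is presentational: on the linear factor you use the Cartan decomposition $\GL(q,\RB)/\O(q) \cong \operatorname{Sym}^{+}(q)$ and the matrix exponential where the paper invokes the Riemannian exponential of the symmetric space $G^{1}_{q}/\O(q)$ (citing Helgason)---these are the same map---and your explicit attention to how the $G^{1}_{q}$-equivariance must be interpreted (since $G^{1}_{q}$ does not fix the base point, so does not act naively on $\tBun_{\O(q)}$) is, if anything, more careful than the paper's brief factorwise assertion of equivariance.
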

\begin{proof}
	The natural projection \( \pi \colon G^{\infty}_{q}\rightarrow G^{1}_{q} \) of groups is split, so, letting \( N = \ker(\pi)\), there is an identification of \( G^{\infty}_{q} \) as the semidirect product \( G^{1}_{q}\ltimes N \). Explicitly, the identification is given by
	\begin{equation*}\label{semidirect}
		G^{\infty}_{q}\ni g \xmapsto{\qquad} \bigl(\pi(g),g\pi(g)^{-1}\bigr)\in G^{1}_{q}\ltimes N .
	\end{equation*}
	The right action of \( \O(q) \) only disturbs the first factor, so induces a diffeomorphism $G^{\infty}_{q}/\O(q)\cong G^{1}_{q}/\O(q)\times N$.
	This diffeomorphism is equivariant for the left action of $G^{1}_{q}$, in that
	\[ A [g] \mapsto (A[\pi(g)], A g \pi(g)^{-1}A^{-1}) \]
	for \( A \in G^{1}_{q} \) and \( [g] \in G^{\infty}_{q}/\O(q) \).
	This gives also the identification
	\[ \tBun_{\O(q)}\bigl(G^{\infty}_{q}/\O(q)\bigr) \cong (\gf^{1}_{q}/\so(q)) \oplus \mathfrak{n} . \]

	There is a canonical bi-$\O(q)$-invariant, left-$G^{1}_{q}$-invariant Riemannian structure on $G^{1}_{q}$, which descends to a Riemannian structure on the homogeneous space $G^{1}_{q}/\O(q)$.
	The Riemannian exponential map for this metric induces a diffeomorphism $\exp_{R} \colon \tBun_{\O(q)}(G^{1}_{q}/\O(q))\rightarrow G^{1}_{q}/\O(q)$, which is equivariant for the left multiplication actions of \( G^{1}_{q} \) \cite[Chapter VI, Theorem 1.1]{helgason}.
	From \cite[Proposition 13.4]{natopdiffgeom}, the exponential map $\exp_{N} \colon \mathfrak{n}\rightarrow N$ is a global diffeomorphism, equivariant for the adjoint actions of \( G^{1}_{q} \) on domain and codomain.
	Combining these two facts, we have the diffeomorphism
	\[
	\jetexp = \exp_{R}\times\exp_{N} \colon (\gf^{1}_{q}/\so(q)) \oplus\mathfrak{n} \xrightarrow{\qquad} \bigl(G^{1}_{q}/\O(q)\bigr)\times N ,
	\]
	which is equivariant in each factor, hence equivariant.
\end{proof}

The associated bundle construction works internal to the diffeological category \cite[Section 8.16]{diffeology}.
In particular, given a principal $G^{\infty}_{q}$-bundle $Y\rightarrow X$, the quotient \( Y / \O(q) \) is the associated $G^{\infty}_{q}/\O(q)$-bundle to the action of \( G^{\infty}_{q} \) on \( G^{\infty}_{q}/\O(q) \).
Define its \emph{vertical tangent bundle} \( V(Y/\O(q)) \) as the associated bundle \( Y\times_{G^{\infty}_{q}}T(G^{\infty}_{q}/\O(q)) \).
We have a commutative diagram,
\[ \begin{tikzcd}
	V(Y/\O(q)) \ar[r] \ar[d] & Y/\O(q) \ar[d] \\
	Y / \O(q) \ar[r, "p" swap] & X \ar[l, bend right, "\sigma"]
\end{tikzcd} \]
where the lower three arrows are bundle projections, and the top arrow is given, using the associated bundle construction on both sides, by the rule
\[
\bigl[y,[g,v]\bigr] = \bigl[y\cdot g,[\id,g^{-1}_{*}v]\bigr]\xmapsto{\qquad} \bigl[y\cdot g,[\jetexp(g^{-1}_{*}v)]\bigr]
\]
for \( y \in Y \) and \( [g,v] \in T(G^{\infty}_{q}/\O(q)) \cong G^{\infty}_{q}/\O(q) \times T_{\O(q)}(G^{\infty}_{q}/\O(q)) \).
The top arrow defines fibrewise diffeomorphisms, in that it maps the fibre over any \( y \in Y/\O(q) \) diffeomorphically to the fibre over \( p(y) \).
The next lemma follows immediately.
\begin{lemma}\label{exp2}
	For any section $\sigma \colon X \to Y/\O(q)$, there is a canonical diffeomorphism of fibre bundles
	\[ \jetexp_{\sigma} \colon \sigma^{*}V(Y/\O(q))\rightarrow Y/\O(q) . \]
\end{lemma}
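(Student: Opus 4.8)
The plan is to realise $\jetexp_{\sigma}$ as the restriction, along $\sigma$, of the top horizontal arrow of the diagram preceding the lemma, and then to use only the section condition $p\circ\sigma=\id_X$ to upgrade the already-established fibrewise diffeomorphism property of that arrow into an isomorphism of bundles over $X$. Write $\Phi\colon V(Y/\O(q))\to Y/\O(q)$ for the top arrow. Recall from the discussion above that $\Phi$ carries the fibre of the left vertical projection $V(Y/\O(q))\to Y/\O(q)$ over a point $z$ diffeomorphically onto the fibre of $Y/\O(q)\to X$ over $p(z)$; this is the one nontrivial input, and it rests in turn on $\jetexp$ of Lemma \ref{exp} being a diffeomorphism of $T_{\O(q)}(G^{\infty}_{q}/\O(q))$ with $G^{\infty}_{q}/\O(q)$ together with the left-translation trivialisation of $T(G^{\infty}_{q}/\O(q))$.

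First I would form the diffeological pullback $\sigma^{*}V(Y/\O(q))\to X$, whose fibre over $x\in X$ is by construction the fibre of $V(Y/\O(q))\to Y/\O(q)$ over $\sigma(x)$, and denote by $\hat\sigma\colon\sigma^{*}V(Y/\O(q))\to V(Y/\O(q))$ the tautological bundle map covering $\sigma$ furnished by the universal property of the pullback. I would then simply set $\jetexp_{\sigma}:=\Phi\circ\hat\sigma$. To check this is a morphism of fibre bundles over $X$, take $z=\sigma(x)$ in the fibrewise property of $\Phi$: the fibre over $x$ is sent into the fibre of $Y/\O(q)\to X$ over $p(\sigma(x))=x$, so $\jetexp_{\sigma}$ covers $\id_X$; and on that fibre it is a diffeomorphism, being the composite of the tautological identification with the fibrewise diffeomorphism $\Phi$. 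A fibrewise diffeomorphism covering the identity is an isomorphism of fibre bundles, which gives the claim.

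The only point needing a word is the diffeological one: that $\jetexp_{\sigma}$ and its fibrewise inverses assemble into a globally smooth isomorphism, rather than a merely fibrewise bijection. This is immediate, since $\hat\sigma$ is smooth by functoriality of the pullback in the diffeological category and $\Phi$ is smooth from its explicit formula, while the inverse is built from $\jetexp^{-1}$ of Lemma \ref{exp} together with the left-translation trivialisation, all of which are smooth. I expect no genuine obstacle here: the entire content of the lemma is carried by the construction of $\Phi$ that precedes it, and the hypothesis that $\sigma$ is a section serves only to identify the base of the resulting fibrewise diffeomorphism with $X$ itself, so that $\sigma^{*}V(Y/\O(q))$ and $Y/\O(q)$ become isomorphic over $X$.
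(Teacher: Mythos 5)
Your proposal is correct and matches the paper's intent exactly: the paper gives no separate proof, stating that the lemma ``follows immediately'' from the preceding diagram, and your argument is precisely that immediate deduction — restrict the top arrow along $\sigma$, use $p\circ\sigma=\id_X$ to see it covers the identity, and invoke the already-established fibrewise diffeomorphism property. The only difference is that you spell out what the paper leaves implicit, including the smoothness of the inverse in the diffeological setting, which is a reasonable amount of added care.
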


Using the equivalence between \( \Fr_{\infty}(\gamma) \) and \( B\Gamma_{q,\infty} \), we construct a section of $\Fr_{\infty}(\gamma)/\O(q)\rightarrow B\Gamma_{q}$.
This is done semi-simplicially, using Lemma \ref{exp2} in the construction of sections
$\sigma_{k} \colon \Delta_{k}\times\NB^{(k)}\times\Gamma_{q}^{(k)}\rightarrow\Delta_{k}\times\NB^{(k)}\times(\Gamma_{q,\infty}^{(k)}/\O(q))$ that satisfy
\begin{equation}\label{sigmakidentities}
	(\id\times\partial_{j})\circ\sigma_{k}\circ(d_{j}\times\id) = (d_{j}\times\id)\circ\sigma_{k-1}\circ(\id\times\partial_{j}) \qquad j=0,\dots,k ,
\end{equation}
guaranteeing the sections glue to a global section.
We take inspiration from Dupont \cite{dupont2}.

Fix any section $\sigma$ of
\[ \Gamma_{q,\infty}^{(0)} \cong \Fr_{\infty}(\RB^{q}) \xrightarrow{\quad} \RB^{q} \cong \Gamma_{q}^{(0)} , \]
and let $\sigma_{0} \colon \Delta_{0}\times\NB^{(0)}\times\Gamma_{q}^{(0)}\rightarrow \Delta_{0}\times\NB^{(0)}\times(\Gamma_{q,\infty}^{(0)}/\O(q))$ be the induced map
\[
\sigma_{0} \colon (*;\alpha;\vec{x})\xmapsto{\qquad} \bigl(*;\alpha;[\sigma(\vec{x})]\bigr), \qquad\vec{x}\in\RB^{q}.
\]
We now construct $\sigma_{k}$ for $k>0$.
The diffeomorphism $\jetexp \colon \sigma^{*}V(\Fr_{\infty}(\RB^{q})/\O(q))\rightarrow \Fr_{\infty}(\RB^{q})/\O(q)$ of Lemma \ref{exp2} allows the definition of a fibrewise contraction:
for $t\in[0,1]$ and $\vec{x}\in\RB^{q}$, define $g_{t,\vec{x}} \colon \Fr_{\infty}(\RB^{q})/\O(q)_{\vec{x}}\rightarrow \Fr_{\infty}(\RB^{q})/\O(q)_{\vec{x}}$ by the formula
\[
g_{t,\vec{x}}(b):=\jetexp_{\sigma}(t\jetexp^{-1}_{\sigma}(b)),\qquad  b\in \Fr_{\infty}(\RB^{q})/\O(q)_{\vec{x}}.
\]
Now letting $(t_{0},\dots,t_{k})$ denote the barycentric coordinates on the standard simplex $\Delta_{k}$, write $s_{i}:=t_{i}+\cdots+t_{k}$ for $i=1,\dots,k$, and define $\sigma_{k} \colon \Delta_{k}\times\NB^{(k)}\times\Gamma_{q}^{(k)}\rightarrow\Delta_{k}\times\NB^{(k)}\times(\Gamma_{q,\infty}^{(k)}/\O(q))$  by the formula
\[
\sigma_{k}(\vec{t};\vec{\alpha};\vec{\gamma}):=(\vec{t};\vec{\alpha};\tilde{\sigma}^{1}_{k}(\vec{t};\vec{\gamma}),\dots,\tilde{\sigma}^{k}_{k}(\vec{t};\vec{\gamma})) ,
\]
where
\begin{equation}\label{sigmak}
	\tilde{\sigma}^{i}_{k}(\vec{t};\vec{\gamma}):=\gamma_{k}^{-1}\cdots\gamma_{i}^{-1}\cdot g_{s_{1},r(\gamma_{1})}\bigg(\gamma_{1}\cdot g_{\frac{s_{2}}{s_{1}},r(\gamma_{2})}\bigg(\gamma_{2}\cdot\dots\cdot g_{\frac{s_{k}}{s_{k-1}},r(\gamma_{k})}\big(\gamma_{k}\cdot\sigma(s(\gamma_{k}))\big)\cdots\bigg)\bigg) .
\end{equation}
As in \cite[p. 241]{dupont2}, $\sigma_{k}$ may be assumed to be smooth, and a routine calculation shows that they satisfy the identities of Equation \eqref{sigmakidentities}.  As a consequence, the $\sigma_{k}$ glue to a smooth section $B\sigma$ of $\Fr_{\infty}(\gamma)/\O(q)\cong B\Gamma_{q,\infty}/\O(q)\rightarrow B\Gamma_{q}$.

We can now easily prove Theorem \ref{sections}.  According to the above construction, smooth sections of $\Fr_{\infty}(\gamma)/\O(q)\rightarrow B\Gamma_{q}$ exist.  Now let $h$ be a smoothly numerable Haefliger structure on a diffeological space $X$, associated to a smooth map $\eta \colon X\rightarrow B\Gamma_{q}$.  From the isomorphism $\Fr_{\infty}(h)/\O(q)\cong\eta^*\Fr_{\infty}(\gamma)/\O(q)$ (Proposition \ref{functorial}), any smooth section of $\Fr_{\infty}(\gamma)/\O(q)\rightarrow B\Gamma_{q}$ induces a corresponding smooth section of $\Fr_{\infty}(h)/\O(q)\rightarrow X$.  Finally, any two smooth sections are smoothly homotopic via an exponentiated-linear homotopy using Lemma \ref{exp2}.

	\bibliographystyle{amsplain}
	\bibliography{references}

\end{document}